\newcounter{CONT}
\definecolor{mygreen}{rgb}{0, 0.75, 0}
\definecolor{myred}{rgb}{0.784, 0, 0}
\newtheorem{theorem}{Theorem}
\newtheorem{prop}{Proposition}
\newtheorem{lemma}{Lemma}
\newtheorem{corollary}{Corollary}
\newtheorem{conjecture}{Conjecture}
\newtheorem{remark}{Remark}
\newtheorem{definition}{Definition}
\newcommand{\claimbegin}[1] {\par\noindent\underline{Proof of #1:}}
\newcommand{\claimend}[1] {\hfill\underline{End proof of #1.}\par\vspace*{2mm}}
\newcommand{\Touch} {\text{Touch}}
\newcommand{\Max} {\text{Max}}
\newcommand{\MAX} {\textbf{Max}\xspace}
\newcommand{\TOUCH} {\textbf{Touch}\xspace}
\newcommand{\FF} {\texttt{FourForests}\xspace}
\newcommand{\FTF} {\texttt{FifteenForests}\xspace}
\newcommand{\RC} {\texttt{RightLabel}\xspace}
\newcommand{\LC} {\texttt{LeftLabel}\xspace}
\newcommand{\SC} {\texttt{SpecialLabel}\xspace}
\newcommand{\CT} {\texttt{LabelTouch}\xspace}
\newcommand{\CST} {\texttt{LabelSpecialTouch}\xspace}
\newcommand{\TM} {\texttt{TripleMax}\xspace}
\newcommand{\TSM} {\texttt{TripleSpecialMax}\xspace}
\newcommand{\se} {edge}
\newcommand{\scc} {special}
\newcommand{\NCS} {\text{NCSP}\xspace}
\newcommand{\BNCS} {\text{BNCSP}\xspace}
\newcommand{\FCN} {Path Covering with Forests Number\xspace}
\newcommand{\PCFN} {\textrm{PCFN}}
\newcommand{\NULL} {\texttt{NULL}\xspace}
\begin{document}
\pagestyle{plain}

\title{Non-Crossing Shortest Paths are Covered with Exactly\\ Four Forests}

\author{Lorenzo Balzotti\footnote{Dipartimento di Scienze di Base e Applicate per l’Ingegneria, Sapienza Universit\`a di Roma, Via Antonio Scarpa, 16, 00161 Roma, Italy. \texttt{lorenzo.balzotti@sbai.uniroma1.it}.}}

\date{}	
\maketitle

\begin{abstract} 
Given a set of paths $P$ we define the \emph{\FCN of $P$} (\PCFN($P$)) as the minimum size of a set $F$ of forests satisfying that every path in $P$ is contained in at least one forest in $F$. We show that \PCFN($P$) is treatable when $P$ is a set of non-crossing shortest paths in a plane graph or subclasses. We prove that if $P$ is a set of non-crossing shortest paths of a planar graph $G$ whose extremal vertices lie on the same face of $G$, then $\PCFN(P)\leq 4$, and this bound is tight.
\end{abstract}

\texttt{Keywords}: shortest paths, planar undirected graphs, non-crossing paths, \FCN

\section{Introduction}

In this article we investigate the structure of particular sets of paths. Given a set of paths $P$ we define the \emph{\FCN of $P$} (\PCFN($P$)) as the minimum size of a set $F$ of forests satisfying that every path in $P$ is contained in at least one forest in $F$. A trivial upper bound is $\PCFN(P)\leq |P|$, in which every forest is composed exactly by one path.

We note that if there are different subpaths joining the same pair of vertices, then it may happen $\PCFN(P)=|P|$. This cannot happen if $P$ is a set of shortest paths, such that there is a unique shortest path for any pair of vertices. We deal with a slightly more general case for which we require the \emph{single-touch} property: given two paths in $P$, their intersection is still a path. If all paths in $P$ are shortest paths in $G$, then the single-touch property is implied by ensuring the uniqueness of the shortest path in $G$, that can be obtained through a tiny perturbation of edges' weights (of $G$).


There is a very restricted literature dealing with this problem for general graph, a first recent result by Bodwin~\cite{bodwin} in 2019 develops a structural theory of unique shortest paths in real weighted graphs: the author characterizes exactly which sets of node sequences can be realized as unique shortest paths in a graph with arbitrary real edge weights. The characterizations are based on a new connection between shortest paths and topology; in particular, the new forbidden patterns are in natural correspondence with two-colored topological 2-manifolds, which are visualized as polyhedra. Even if finding shortest paths is a classical problem with applications in several fields, there are not other results focused on shortest paths' structure.

We prove that if $P$ is a set of non-crossing shortest paths in a plane (i.e., a planar graph with a fixed embedding) undirected graph $G$, whose extremal vertices lie on the same face of $G$, then $\PCFN(P)\leq4$, and this bound is tight, where two paths are \emph{non-crossing} if they do not cross each other in the given embedding.

We first explain that this setting, i.e., non-crossing paths in plane graphs, is not too restrictive. Removing the non-crossing property makes $\PCFN(P)$ dependent on the dimension of $P$. We briefly prove this with an example. In Figure~\ref{fig:6_paths} there are six pairwise crossing paths in a grid graph having the extremal vertices on the same face of $G$.  Note that any set of three paths forms a cycle, hence, each forest can contain at most two paths. So the \FCN of these six paths is three. It is trivial to generalize this example to a set $P$ of single-touch shortest paths in a plane graph whose extremal vertices lie on the same face so that $\PCFN(P)=|P|/2$.

\begin{figure}[h]
\captionsetup[subfigure]{justification=centering}
\centering
	\begin{subfigure}{6cm}
\begin{overpic}[width=6cm,percent]{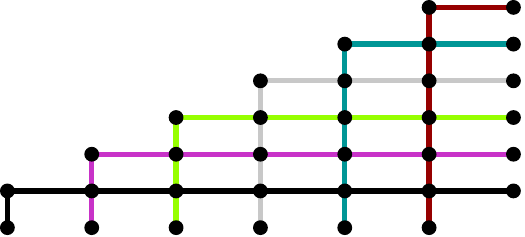}
\put(0,11){$v_1$}
\put(16,18){$v_2$}
\put(32,25){$v_3$}
\put(48,32){$v_4$}
\put(64,39){$v_5$}
\put(80,46){$v_6$}
\end{overpic}
\end{subfigure}
\caption{a set $P$ of pairwise crossing paths satisfying $\PCFN(P)=|P|/2$. If edges incident on $v_i$, for $i\in[6]$, have weight less than 1 and other edges have weight 1, then colored paths are unique shortest paths between their extremal vertices.} 
\label{fig:6_paths}
\end{figure}

Our result about the \FCN of non-crossing shortest paths in plane undirected graphs answers to the open problem by Balzotti and Franciosa~\cite{err_giappo}, asking for an algorithm able to list a path in a time proportional to its length. Indeed, all algorithms that find non-crossing shortest paths in plane graphs~\cite{balzotti-franciosa_2,steiger,giappo2} actually find their union, and so listing a path is not trivial. In this way, we can apply the result by Gabow and Tarjan~\cite{gabow-tarjan} about lowest common ancestor queries, hence it is possible to list each path in $P$ in time proportional to its length. This application was the idea behind the introduction of the \FCN.

Finding non-crossing shortest paths in a plane graph is a problem with primary applications in VLSI layout~\cite{bhatt-leighton,leighton1,leighton2}, and thanks to the article by Reif~\cite{reif} it is also used to compute the max flow in undirected plane graphs~\cite{hassin-johnson,italiano} and vitality problems~\cite{balzotti-franciosa_3}. The above cited articles~\cite{giappo2,steiger} solve this problem for positive weighted graphs , while in~\cite{balzotti-franciosa_2} a linear time algorithm is shown for the unweighted case. In this settings, the extremal vertices of the non-crossing shortest path are always on the same face of the planar embedding (in~\cite{giappo2} it is also studied the case in which the extremal vertices are on two faces), while in~\cite{erickson-nayyeri} the extremal vertices are on $h$ face boundaries. It is stated in~\cite{erickson-nayyeri} that the union of a set $P$ of non-crossing shortest paths in a plane graph whose extremal vertices lie on the same face can be covered with at most two forests so that each path is contained in at least one forest, i.e., $\PCFN(P)=2$. We stress that this result is incorrect, a first counterexample is shown in Figure~\ref{fig:3_forests} whose \FCN is 3 (it can be proved by a simple enumeration).

\begin{figure}[h]
\centering
\begin{overpic}[width=10cm,percent]{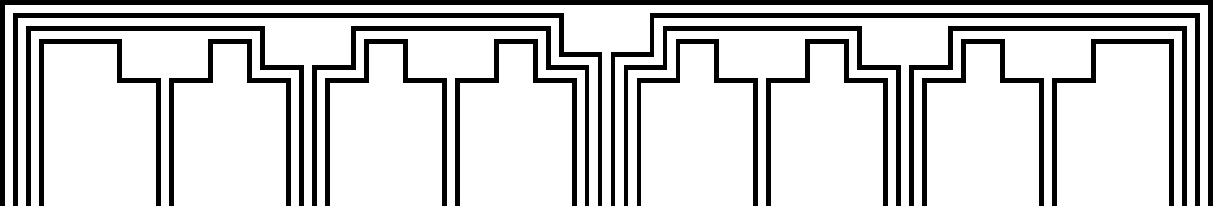}

\end{overpic}
\caption{a set of 15 non-crossing paths whose \FCN is 3 (parallel adjacent segments represent overlapping paths).} 
\label{fig:3_forests}
\end{figure}

\paragraph{Related problems} The \FCN is strictly linked to the concept of arboricity. The \emph{arboricity} of an undirected graph $G$ is the minimum number of forests $\gamma_f(G)$ into which its edges can be partitioned. It measures how a graph is dense, indeed, graphs with many edges have high arboricity, and graphs with high arboricity must have a dense subgraph. By the well-known Nash-Williams Theorem~\cite{nash-williams}  (proved also independently by Tutte~\cite{tutte})
\begin{equation*}
\gamma_f(G)=\max_{X\subseteq V(G)}\left\lceil{\frac{|E(G[X])|}{|X|-1}}\right\rceil
\end{equation*}
where $G[X]$ denotes the subgraph of $G$ induced by $X$. The \emph{fractional arboricity} was introduced by Payan in~\cite{payan}, see also~\cite{catlin-grossman,goncalves-francese}. Arboricity is studied for general graphs and it is specialized for planar graph and subclasses of planar graphs. By the above cited Nash-Williams Theorem~\cite{nash-williams}, every planar graph has arboricity 3, i.e., every planar graph can be covered with at most 3 forests, and if it has girth greater or equal to 4, then it decomposes into two forests. In~\cite{goncalves,he-hou} planar graphs with girth larger than some constant are decomposed into a forest and a graph with bounded degree. Decomposition of planar graphs into a forest and a matching has been studied in~\cite{bassa-burns,borodin-ivanova,borodin-kostochka,he-hou,montassier-de_mendez,wang-zhang}.

Arboricity is one of the many faces of \emph{graphs covering}~\cite{beineke,harary1,harary2,orlin-covering} which is a classical problem in graph theory. A recent and complete overview about covering problems can be found in~\cite{schwartz}. The classical covering problem asks for covering an input graph $H$ with graphs from a fixed covering class $\mathcal{G}$. Some variants of the problems are in~\cite{knauer-ueckerdt}. In the arboricity problem the family $\mathcal{G}$ consists in forests. Other kinds of arboricity have been introduced in literature, as star arboricity~\cite{akiyama-kano,algor-alon,alon-mcdiarmid}, caterpillar arboricity~\cite{goncalves-caterpillar,goncalves-ochem}, linear arboricity~\cite{akiyama-exoo,alon,rautenbach-volkann,wu-linear}, pseudo arboricity~\cite{hakimi,picard-queyranne-forest} in which graphs are covered with star forests, caterpillar forests, linear forests, and pseudoforests (undirected graphs in which every connected component has at most one cycle), respectively.

If the covering class is the class of planar graph, outerplanar graph, or interval graph, then we deal with planar thickness~\cite{beineke}, outerplanar thickness~\cite{mutzel-odenthal} or track number~\cite{gyarfas}, respectively.

\paragraph{Future works} With the introduction of the \FCN, we propose an original nuance on the classical covering problem and we would like to deal with the \FCN in a more general context. The first generalization is to ask whether the \FCN of a set of non-crossing single-touch shortest paths in a plane graph is bounded by a constant, even if the extremal vertices of the paths are not required to lie on the same face. We conjecture that such a constant exists for general plane graph thanks also to the following remark, whose proof is omitted.

\begin{remark}
Let $P$ be a set of non-crossing single-touch shortest paths in a plane graph $G$ and let $v$ be a vertex of $G$. Then all paths of $P$ containing $v$ form a tree.
\end{remark}

\begin{conjecture}
There exists $\ell\in\mathbb{N}$ such that $\PCFN(P)\leq\ell$, for any set $P$ of non-crossing single-touch shortest paths in a plane graph.
\end{conjecture}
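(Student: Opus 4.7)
The plan is to reduce the general setting to the main theorem of this paper. Let $P$ be a set of non-crossing single-touch shortest paths in a plane graph $G$, and let $\mathcal{F}$ be the set of faces of $G$ containing at least one endpoint of a path in $P$. If one can construct an auxiliary plane graph $G'$ in which $P$ remains a family of non-crossing single-touch shortest paths, but now all their endpoints lie on a single face of $G'$, then the theorem proved in this paper yields $\PCFN(P)\leq 4$ for $G'$, and hence for $G$.

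To build $G'$, I would merge the faces of $\mathcal{F}$ iteratively. A natural merging operation, for two faces $f_i,f_j\in\mathcal{F}$, is to insert a new path $\pi_{ij}$ from a boundary vertex of $f_i$ to one of $f_j$, consisting of fresh edges with very large weights together with a generic perturbation. The large weights ensure that no edge of $\pi_{ij}$ is used in a shortest path between original vertices, so each path of $P$ remains a unique shortest path, which preserves the single-touch property. Provided $\pi_{ij}$ is routed inside the complement of $\bigcup P$ in the plane, the set $P$ stays non-crossing, while $f_i$ and $f_j$ become a single face of $G'$; after $|\mathcal{F}|-1$ merges, all endpoints lie on a single face.

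The central obstacle is the topological feasibility of the routing. It can happen that $f_i$ and $f_j$ are separated in the plane by paths of $P$, in which case no suitable merging path exists in the complement. Here the \emph{Remark} preceding the conjecture is the leverage point: at every vertex the paths of $P$ form a tree, which globally constrains how $\bigcup P$ can partition the plane. The hard step is to turn this local information into a global statement guaranteeing that the merging procedure can always be completed at a constant blow-up of $\PCFN(P)$, independent of $|\mathcal{F}|$. If this reduction fails, a fallback is a direct proof generalizing the main theorem, classifying paths by a topological invariant that does not require a common face and such that paths in the same class cannot form a cycle; designing such an invariant and bounding the number of classes by a universal constant is itself the most substantive step of this alternative strategy.
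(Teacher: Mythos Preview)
This statement is a \emph{Conjecture} in the paper, presented explicitly as an open problem in the ``Future works'' paragraph; the paper does not contain a proof of it. So there is no paper proof to compare your proposal against.

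Evaluated on its own merits, your proposal is a sketch with a gap that you yourself flag, and that gap is fatal to the reduction strategy rather than merely technical. Consider a plane graph shaped like an annulus, with inner face $f_1$ and outer face $f_2$, and let $P$ consist of three or more pairwise vertex-disjoint paths, each running from a vertex on $\partial f_1$ to a vertex on $\partial f_2$. These paths are non-crossing and (vacuously) single-touch, and can be made unique shortest paths by a suitable weighting. By a Jordan-curve argument, any arc in the plane from $f_1$ to $f_2$ must cross at least one of them, so no merging path $\pi_{12}$ can be routed in the complement of $\bigcup P$; the two endpoint-faces are genuinely separated. The Remark you invoke is purely local---it concerns paths sharing a common vertex---and gives no leverage here, since the radial paths share no vertex at all. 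Hence the reduction to a single face breaks already on this elementary instance, and you cannot appeal to the paper's main theorem.

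Your fallback plan---design a topological invariant partitioning $P$ into boundedly many classes whose unions are forests---is not a proof strategy but a restatement of exactly what the conjecture asks for; no concrete invariant or bound is proposed. As it stands, the conjecture remains open and your proposal does not close it.
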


The \FCN can be studied also for a set of paths $P$ beyond planar graphs, as in $k$-planar graphs~\cite{pach-toth}, $k$-quasi-planar graphs~\cite{agarwal-aronov}, RAC graphs~\cite{didimo-eades}, fan-crossing-free graphs~\cite{cheong-peled}, fan-planar graphs~\cite{kaufmann-ueckert}, $k$-gap-planar graphs~\cite{bae-baffier}; a complete survey about these graph classes can be found in~\cite{hong-tokuyama}. 

It would be interesting to investigate the \FCN in the case of crossing paths in plane graphs or related graph classes. As shown in Figure~\ref{fig:6_paths}, for a set of crossing paths $P$ its \FCN may depend on $|P|$. To deal with these cases it is necessary to understand ``how much'' the paths cross each others. In~\cite{schaefer} several notions about crossing, crossing number and variants are explained, thus the \FCN can be studied with respect to these measures. We note that in~\cite{schaefer} the notions are given for graphs, but they can be extended to sets of paths.

Clearly, the \FCN can be generalized to other covering families as done for the arboricity or the classical covering problem. Thus we can introduce, for example, the Path Covering with Stars Number, the Path Covering with Caterpillars Number, Path Covering with Planars Number and so on.

\paragraph{Our approach} In a first step we organize all the paths into a partial order named \emph{genealogy tree}. Then we translate the \FCN into a problem of \emph{forest labeling}, i.e., a labeling assigning labels to paths such that then union of paths with a same label is a forest. The number of distinct labels corresponds to the upper bound of \FCN. A crucial result is that we can establish whether a labeling is a forest labeling by restricting the check only to the faces of the graph resulting from the union of the paths. At this point the main result is reached by three steps: first we prove that the \FCN is constant by introducing a simple algorithm \FTF able to give a forest labeling which uses at most 15 labels (see Theorem~\ref{th:15_forest} and Corollary~\ref{cor:15_forest_teorico}); then we refine this algorithm obtaining algorithm \FF able to give a forest labeling which uses at most 4 labels; finally we show that this result is tight by exhibiting a set of non-crossing shortest paths in a plane graph whose \FCN is at least 4 (we recall that in Figure~\ref{fig:3_forests} there is a set of non-crossing shortest paths $P$ such that $\PCFN(P)=3$). This proves our main result.

Now we briefly explain the strategy behind our algorithms. We recursively decompose all the paths with respect to the genealogy tree and intersections between paths. In this way, at each iteration our algorithms have to assign labels only to paths intersecting a fixed path $p$.

\paragraph*{Structure of the article}

In Section~\ref{sec:preliminaries} preliminaries notations and definitions are given, we also propose a labeling approach for our problem. In Section~\ref{sec:restricting_to_faces} we prove that we can restrict us only to $U$'s faces, where $U$ is the graph obtained by the union of all non-crossing shortest paths. In Section~\ref{sec:15_ALL}  we prove that the \FCN of a set of non-crossing shortest paths in a plane graph is at most 15. In Section~\ref{sec:4_ALL} we decrease this bound to 4. Finally, in Section~\ref{sec:controesempio} we show that the latter bound is tight. Conclusions are in Section~\ref{sec:conclusions}.

\section{Preliminaries}\label{sec:preliminaries}

In this section we introduce some definitions and notations. In Subsection~\ref{sub:notations} we give general notation. In Subsection~\ref{sub:the_problem} we formally define the problem and we describe our labeling approach. In Subsection~\ref{sub:genealogy_tree} we partially order the non-crossing shortest paths by the \emph{genealogy tree}.

\subsection{Notations}\label{sub:notations}

All graphs in this article are undirected. Let $G=(V,E)$ be a graph, where $V$ is a set of \emph{vertices} and $E$ is a collection of pairs of vertices called \emph{edges}. We recall standard union and intersection operators on graphs.

\begin{definition}
Given two graphs $G=(V(G),E(G))$ and $H=(V(H),E(H))$, we define the following operations and relations:
\begin{itemize}
\item $G\cup H=(V(G)\cup V(H),E(G)\cup E(H))$,
\item $G\cap H=(V(G)\cap V(H),E(G)\cap E(H))$,
\item $H\subseteq G\Longleftrightarrow V(H)\subseteq V(G)$ and $E(H)\subseteq E(G)$,
\item $G\setminus H=(V(G),E(G)\setminus E(H))$.
\end{itemize}
\end{definition}

Given a graph $G=(V(G),E(G))$, given an edge $e$ and a vertex $v$ we write, for short, $e\in G$ in place of $e\in E(G)$ and $v\in G$ in place of $v\in V(G)$.

We denote by $uv$ the edge whose endpoints are $u$ and $v$. We use angle brackets to denote ordered sets. For example, $\{a,b,c\}=\{c,a,b\}$ and $( a,b,c)\neq ( c,a,b)$. Moreover, for every $\ell\in\mathbb{N}$ we denote by $[\ell]$ the set $\{1,\ldots,\ell\}$.

Given a path $p\in P$ and two vertices $u,v$ of $p$, we define $p[u,v]$ the subpath of $p$ from $u$ to $v$. We say that a path $p$ is an \emph{$ab$ path} if its extremal vertices are $a$ and $b$. For an $ab$ path $q$ and a $bc$ path $r$, we define $q\circ r$ as the (possibly not simple) path obtained by the  concatenation of $q$ and $r$.

We denote by $f^\infty_G$ the external face of a plane graph $G$, if no confusions arise we remove the subscript $G$.

For a simple cycle $C$ of a plane graph $G$, we define the \emph{region bounded by $C$} the maximal subgraph of $G$ whose external face has $C$ as boundary.

\subsection{The problem and a labeling approach}\label{sub:the_problem}

In this subsection we give a formally definition of our problem and we introduce a labeling approach. From now on $G$ denotes a plane graph and $f^\infty$ denotes the external face of $G$. For convenience, we assume that the extremal vertices of paths in $P$ lie on $f^\infty$.

\begin{definition}
Two paths $p$ and $q$ are \emph{single-touch} if their intersection is still a (possibly empty) path.
\end{definition}

We observe that the single-touch property can be always required for a set of shortest paths, and it also known as \emph{consistent property} in the literature of path systems~\cite{bodwin}. We stress that in this article we use the single-touch property rather than the property of being shortest paths. Indeed, it is easy to describe a set $P$ of $k$ non-crossing shortest paths in a plane graphs whose \FCN is $k-1$ if the single-touch property is not required.

\begin{definition}\label{definition:NCS}
Given a set of paths $P$ we say that $P$ is a \emph{set of non-crossing shortest paths (\NCS)} if there exists a plane graph $G$ such that 
\begin{itemize}\itemsep0em
\item for each $p\in P$, the extremal vertices of $p$ are in $f^\infty$,
\item for each $p\in P$, $p$ is a shortest path in $G$ and $P$ is a set of single-touch paths,
\item for each $p,q\in P$, $p$ and $q$ are non-crossing in $G$.
\end{itemize}
\end{definition}

We observe that the single-touch property can be always required for a set of shortest paths, and it also known as \emph{consistent property} in the literature of path systems~\cite{bodwin}. Indeed, the single-touch property is implied by ensuring the uniqueness of the shortest path in $G$, that can be obtained through a tiny perturbation of edges' weights. We stress that in this chapter we use the single-touch property rather than the property of being shortest paths. Indeed, it is easy to describe a set $P$ of $k$ non-crossing shortest paths in a plane graphs whose \FCN is $k-1$ if the single-touch property is not required.

From now on, if no confusions arise, given a \NCS $P$, $G$ is the plane graph in Definition~\ref{definition:NCS}. We study the \FCN of a \NCS $P$ by using a labeling function that assigns labels to paths in $P$. So we say that a function $\mathcal{L}:Q\mapsto [k]$ is a \emph{path labeling of $P$} if $\mathcal{L}$ assigns one value of $[k]$ to each path in $Q$, for some $k\in\mathbb{N}$ and $Q\subseteq P$. If no confusion arises, then we omit the dependence on $P$.

\begin{definition}
Given a \NCS $P$, given a path labeling $\mathcal{L}$ of $P$, $\mathcal{L}:P\mapsto [k]$, we say that $\mathcal{L}$ is a \emph{forest labeling} if $\bigcup_{\{p\in P\,|\, \mathcal{L}(p)=i\}}p$ is a forest for each $i\in[k]$.
\end{definition}

We extend the definition of labeling to a set of paths $Q$ by denoting $\mathcal{L}(Q)=\bigcup_{q\in Q}\mathcal{L}(q)$. Moreover, given an edge $e$, we define $\mathcal{L}(e)=\mathcal{L}(\{q\in P \,|\, e\in E(q)\})$, hence, $\mathcal{L}(e)$ may contain more colors.

For a path $p$, we denote its extremal vertices by $x_p$ and $y_p$. W.l.o.g., we assume that the terminal pairs are distinct, i.e., there is no pair $p,q\in P$ such that $\{x_p,y_p\}=\{x_q,y_q\}$. Let $\gamma_p$ be the path in $f^\infty$ that goes clockwise from $x_p$ to $y_p$, for $p\in P$. We say that pairs $\{(x_p,y_p)\}_{p\in P}$ are \emph{well-formed} if for all $q,r\in P$ either ${\gamma_q}\subseteq{\gamma_r}$ or ${\gamma_r}\supseteq{\gamma_q}$ or ${\gamma_q}$ and ${\gamma_r}$ share no edges. We note that if terminal pairs are well-formed, then there always exists a set of pairwise non-crossing shortest paths, each one joining a pair. The revers is not true if some paths are subpaths of the infinite face of $G$; this case is not interesting in the applications and it has never been studied in literature, where the terminal pairs are always assumed to be well-formed. Hence we assume that pairs $\{(x_p,y_p)\}_{p\in P}$ are well-formed.

Finally, we observe that the non-crossing and single-touch properties of the paths imply that the embedding of the paths is unique; this fact is formally proved in~\cite{err_giappo}.

\subsection{Genealogy tree}\label{sub:genealogy_tree}

Given a \NCS $P$, we define here a partial ordering as in~\cite{giappo2} that represents the inclusion relation between the $\gamma_p$'s. This relation intuitively corresponds to an \emph{adjacency} relation between non-crossing shortest paths joining each pair.

Choose an arbitrary $p_1\in P$ such that there are neither $x_q$ nor $y_q$, with $q\neq p_1$ and $q\in P$, walking on $f^{\infty}$ from $x_{p_1}$ to $y_{p_1}$ (either clockwise or counterclockwise), and let $e_1$ be an arbitrary edge on that walk. For each $q \in P$, we can assume that $e_1\not\in\gamma_q$, indeed if it is not true, then it suffices to switch $x_q$ with $y_q$. Given $p,q\in P$, We say that $p \preceq q$ if $\gamma_p\subseteq\gamma_q$. We define the \emph{genealogy tree}  $T_g^P$ of $P$ as the transitive reduction of poset $(P,\preceq)$; if no confusion arises, then we omit the apex $P$. We consider $T_g^P$ as a rooted tree.

If $p\preceq q$, then we say that $p$ is a \emph{descendant} of $q$ and $q$ is an \emph{ancestor} of $p$. Given $p,q\in P$, we say that $p$ and $q$ are \emph{uncomparable} if $p\npreceq q$ and $q\npreceq p$. 

Figure~\ref{fig:gt} shows the extremal vertices of a \NCS $P$, and the corresponding genealogy tree $T^P_g$.  From now on, in all figures we draw $f^\infty_G$ by a solid light grey line.

Every path $p$ splits $G$ into two subgraphs $Int_p$, the ``internal'' subgraph of $G$ with respect to $p$, and $Ext_p$, the ``external'' subgraph of $G$ with respect to $p$; in order to well define these subgraphs, we require that $p\subseteq Int_{p_1}$ and $Int_p\subseteq Int_{p_1}$, for every $p\in P$. We stress that $p= Int_p\cap Ext_p$, for every $p\in P$.

\begin{figure}[h]
\captionsetup[subfigure]{justification=centering}
\centering
	\begin{subfigure}{4cm}
\begin{overpic}[width=6cm,percent]{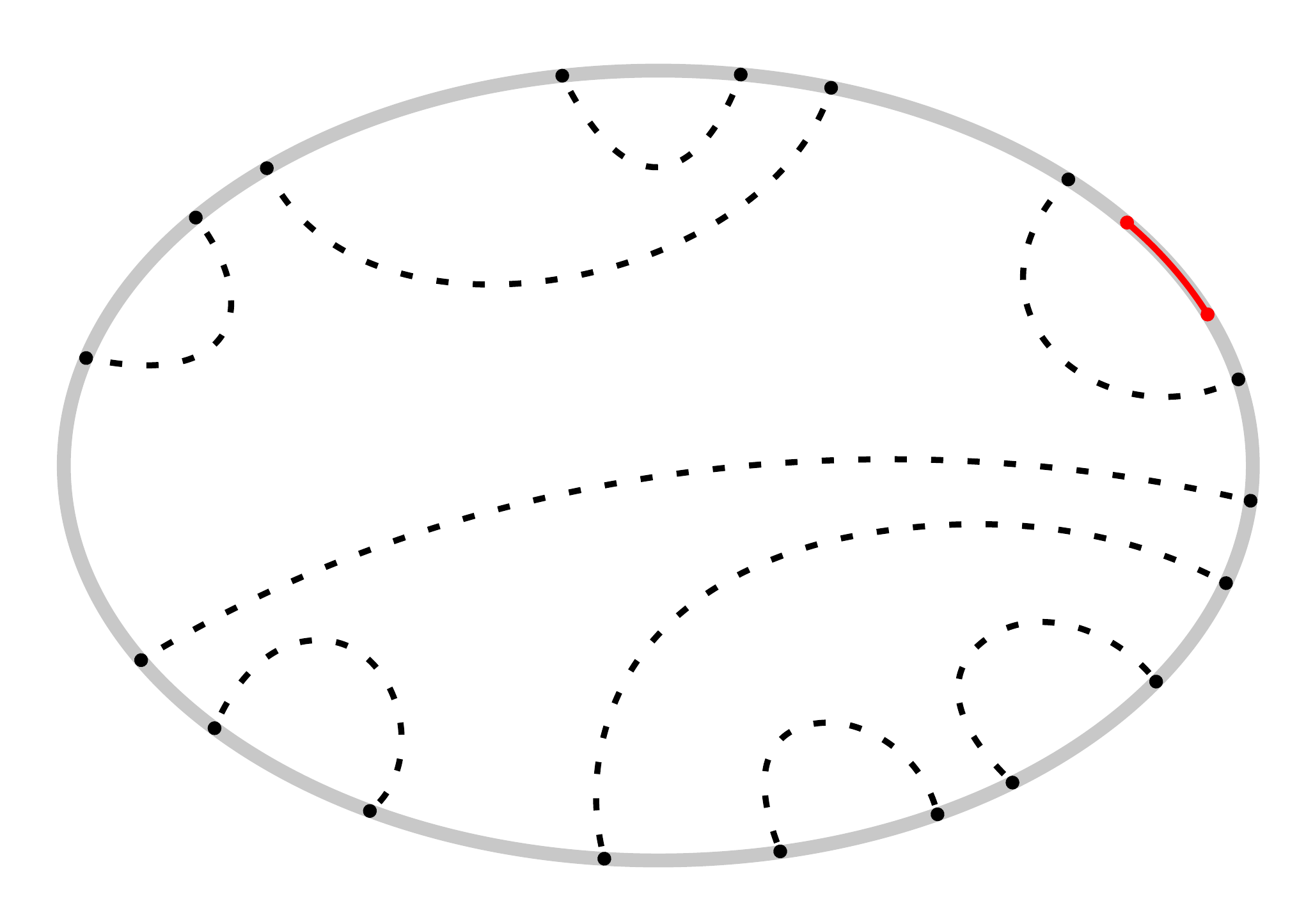}
\put(91,51){\color{red} $e_1$}
\put(-4.5,32){$f^\infty$}

\put(-1,43.5){$x_7$}

\put(64,66.5){$y_8$}

\put(82.5,59){$y_1$}
\put(96,42){$x_1$}
\put(97.5,30.5){$x_2$}
\put(5.5,16.8){$y_2$}
\put(95.4,24){$x_3$}
\put(44.5,0.3){$y_3$}
\put(89.9,16.5){$x_4$}
\put(79,7){$y_4$}
\put(70.2,3.7){$x_5$}
\put(60,0.9){$y_5$}
\put(25,4){$x_6$}
\put(14,9.5){$y_6$}
\put(8,55){$y_7$}
\put(15,60.5){$x_8$}
\put(38.5,67.8){$x_9$}
\put(55,68.3){$y_9$}

\end{overpic}
\end{subfigure}
\qquad\qquad\qquad\qquad\quad
	\begin{subfigure}{4.7cm}
\begin{overpic}[width=4.7cm,percent]{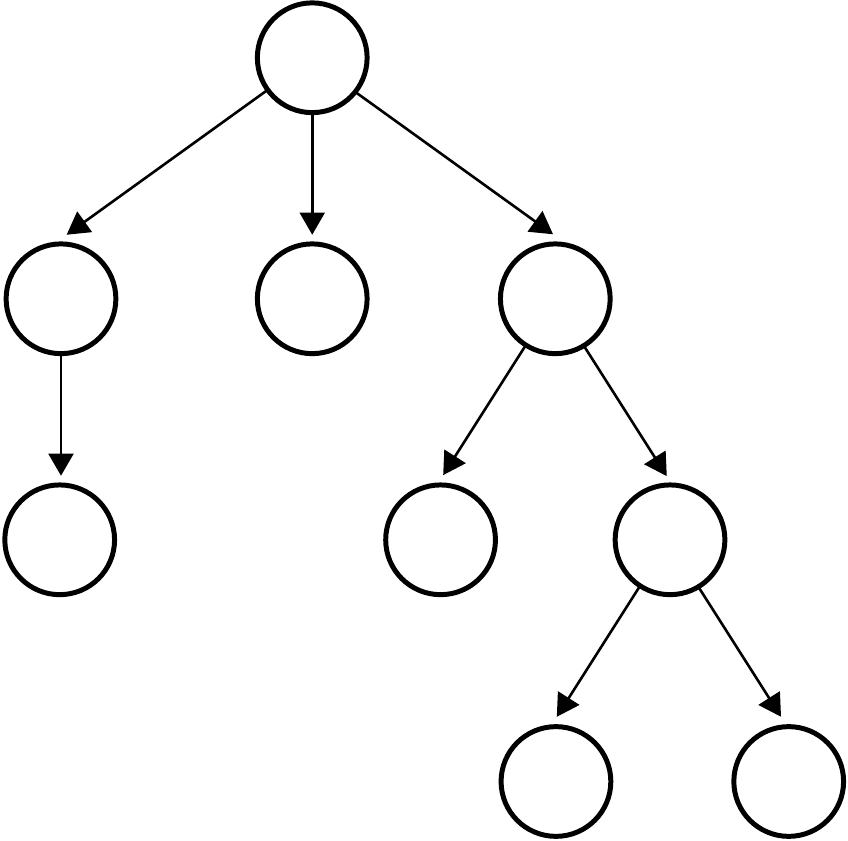}
\put(33.7,91){$p_1$}
\put(3.9,62.7){$p_8$}
\put(33.7,62.7){$p_7$}
\put(62.1,62.7){$p_2$}
\put(3.9,34.2){$p_9$}
\put(48.8,34.2){$p_6$}
\put(75.9,34.2){$p_3$}
\put(62.5,5.7){$p_5$}
\put(90,5.7){$p_4$}
\end{overpic}
\end{subfigure}
\caption{on the left the extremal vertices of paths in $P$, on the right its genealogy tree $T_g$.}
\label{fig:gt}
\end{figure}

\section{Restricting to faces}\label{sec:restricting_to_faces}

Our goal is to find a path labeling such that every set of paths covering a cycle does not share the same label. In this section we prove that we can restrict our check to $U$'s faces, where $U$ is the graph obtained by the union of all paths in $P$. In a first step we prove that given a \NCS $P$ its genealogy tree can be binarized (see Subsection~\ref{sub:binarization}). After this simplification, we can establish whether a path labeling is a forest labeling by checking how it works on the faces of $U$ (see Subsection~\ref{sec:solving_faces} and in particular Theorem~\ref{th:faces}).

\subsection{Binarization of the genealogy tree}\label{sub:binarization}

In order to simplify the treatment, given a \NCS $P$, we can assume that its genealogy tree $T_g$ is a binary tree in the following way. For two paths $p,q\in P$, we say that $p\lhd q$ if $x_1,x_q,y_q,x_p,y_p,y_1$ appear in this order on $f^\infty$; we recall that 1 is the root of the genealogy tree. Given $p\in P$ having $r$ children with $r\geq3$, we order its set of children $( q_1,\ldots,q_r)$, so that $q_j\lhd q_{j+1}$ for $j\in[r-1]$. If we add a terminal pair $( x_{p'},y_{p'})$ so that $x_{p'}=c_2$ and $y_{p'}=y_r$, then $p$ has only two children $q_1$ and $p'$. By repeating this procedure, we obtain the binarization of $T_g$. Note that the number of terminal pairs becomes at most doubles. 

We observe that, being $U$ connected, then there exists an $x_{p'}y_{p'}$ path that does not cross other paths. Moreover, this path is a shortest path in $U$ but it might be not a shortest path in $G$. We do not care about this because it is an auxiliary path. By repeating this reasoning for all $i\in[k]$ having more than two children, we can assume that $T_g$ is binary. Being $U$ connected, then the binarization can be obtained in $O(|P|)$ time because we have only to add some terminal pairs.

\begin{definition}\label{def:binary_set_of_paths}
Given a set of paths $P$ we say that $P$ is a \emph{binary set of non-crossing shortest paths (\BNCS)} if 
\begin{itemize}\itemsep0em
\item $P$ is a \NCS,
\item each path $p\in P$ has zero or two children in the genealogy tree,
\item for each $p\in P$, if $p$ has two children in the genealogy, then $p,p_r,p_\ell$ form a face.
\end{itemize}
\end{definition}

We observe that the last requirement in Definition~\ref{def:binary_set_of_paths} can be always obtained by modifying $f^\infty$ contracting vertices and changing edge lengths in order to maintain the shortest path property.

In Figure~\ref{fig:gt_binary} the binarization of $T_g$ in Figure~\ref{fig:gt} according to Definition~\ref{def:binary_set_of_paths}.

\begin{figure}[h]
\captionsetup[subfigure]{justification=centering}
\centering
	\begin{subfigure}{3cm}
\begin{overpic}[width=6cm,percent]{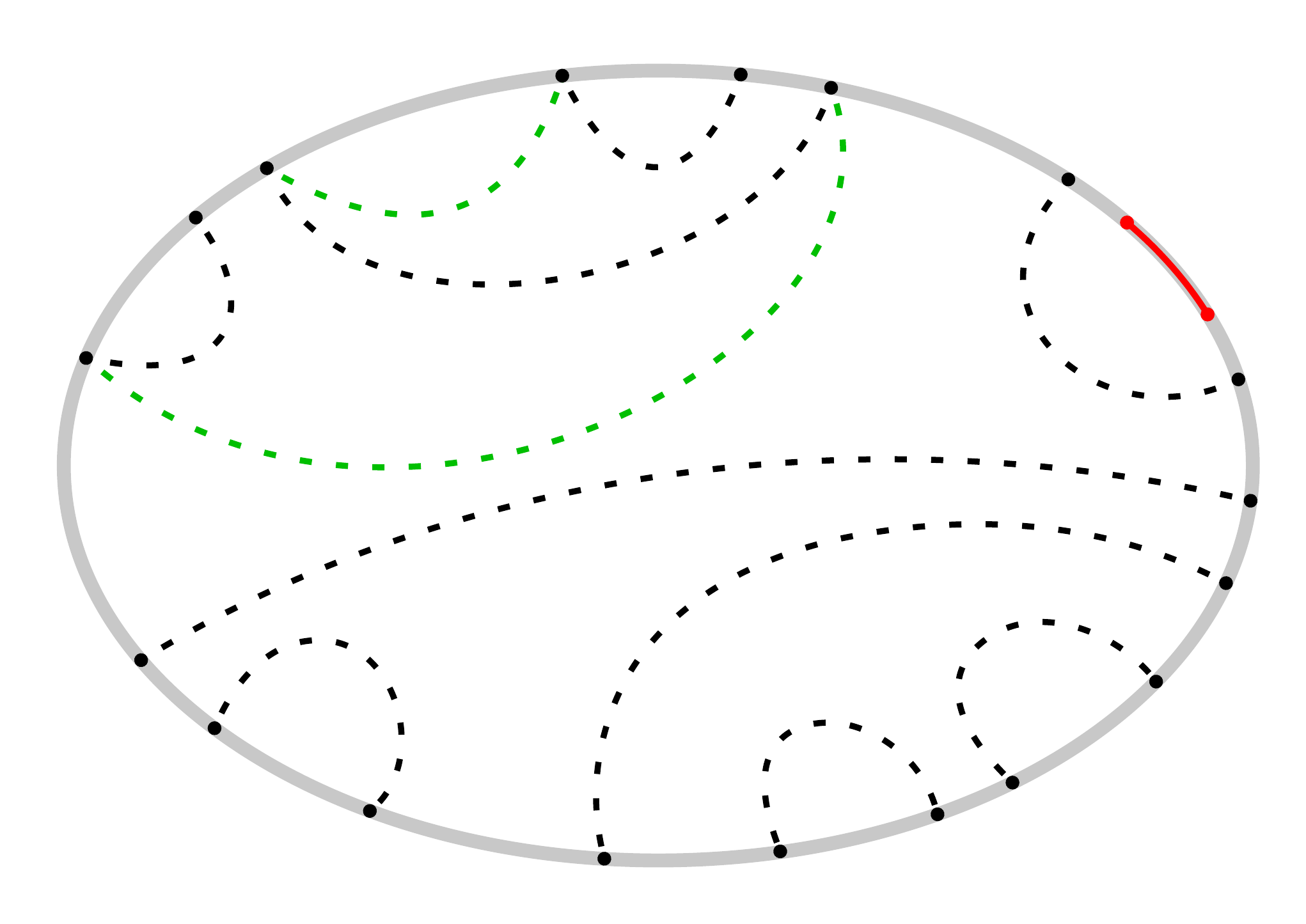}
\put(91,51){\textcolor{myred}{$e_1$}}
\put(-4.5,32){$f^\infty$}

\put(-18.5,43.5){$\textcolor{mygreen}{x_{10}}$ $=x_7$}

\put(64,66.5){$y_8=$ $\textcolor{mygreen}{y_{10}}$}

\put(82.5,59){$y_1$}
\put(96,42){$x_1$}
\put(97.5,30.5){$x_2$}
\put(5.5,16.8){$y_2$}
\put(95.4,24){$x_3$}
\put(44.5,0.3){$y_3$}
\put(89.9,16.5){$x_4$}
\put(79,7){$y_4$}
\put(70.2,3.7){$x_5$}
\put(60,0.9){$y_5$}
\put(25,4){$x_6$}
\put(14,9.5){$y_6$}
\put(8,55){$y_7$}
\put(-3,60.5){$\textcolor{mygreen}{x_{11}}=x_8$}
\put(20.5,67.8){$\textcolor{mygreen}{y_{11}}=x_9$}
\put(53,68.3){$y_9$}
\end{overpic}
\end{subfigure}
\qquad\qquad\qquad\qquad\qquad
	\begin{subfigure}{5cm}
\begin{overpic}[width=6cm,percent]{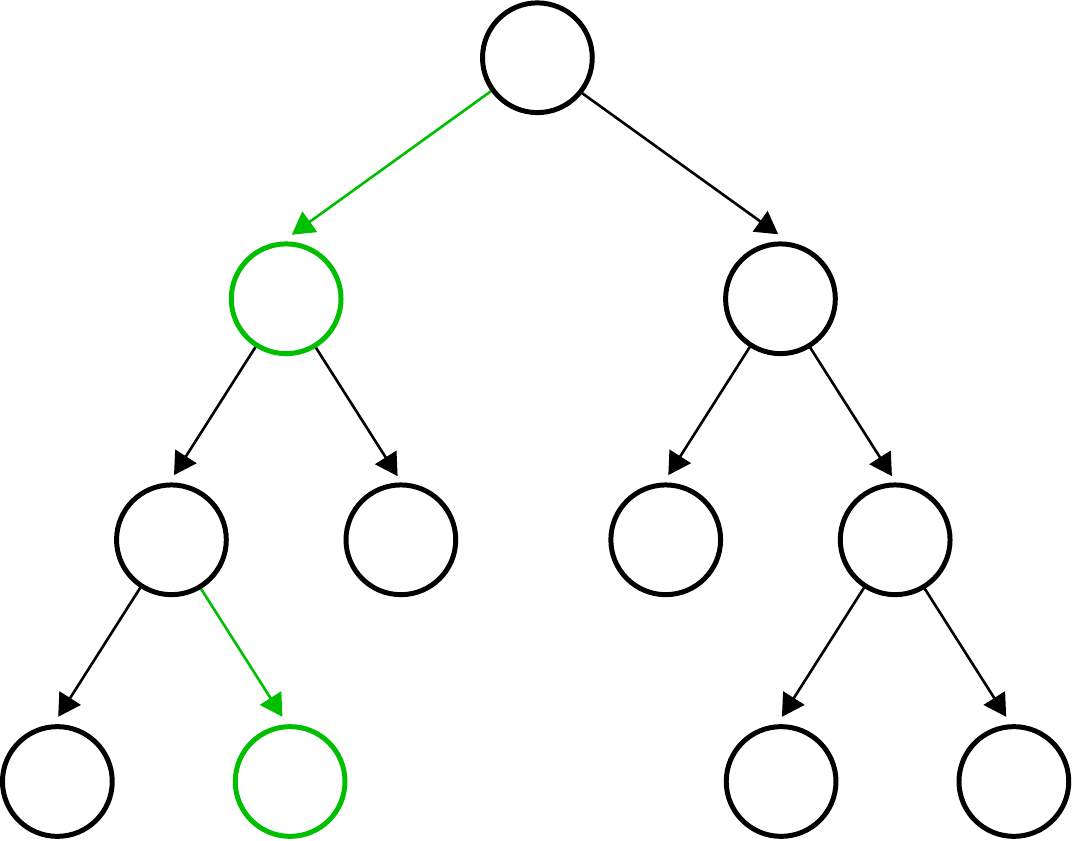}
\put(47.7,72){$p_1$}
\put(23.2,49.5){$p_{10}$}
\put(70.5,49.5){$p_2$}
\put(13.7,27){$p_8$}
\put(35,27){$p_7$}
\put(59.5,27){$p_6$}
\put(81.2,27){$p_3$}
\put(70.5,4.5){$p_5$}
\put(92.2,4.5){$p_4$}
\put(3.,4.5){$p_9$}
\put(23.5,4.5){$p_{11}$}
\end{overpic}
\end{subfigure}
\caption{how to binarize the genealogy tree in Figure~\ref{fig:gt} according to Definition~\ref{def:binary_set_of_paths} by adding two terminal pairs (in green).}
\label{fig:gt_binary}
\end{figure}

\subsection{Solving faces}\label{sec:solving_faces}

The goal of this subsection is to prove that, given a path labeling $\mathcal{L}$, if every face of $U$ is \emph{solved by $\mathcal{L}$} (see Definition~\ref{def:solved_by_C}) then $\mathcal{L}$ is a forest labeling, as stated in Theorem~\ref{th:faces}. This result allows us to greatly simplify the discussion.

From now on, we assume that our input is a \BNCS $P$ and we denote by $\mathcal{F}$ the set of faces of $U$, where we recall that $U=\bigcup_{p\in P}p$. In the following definition we specify some paths, subpaths and edges related to a face.

\begin{definition}\label{def:upper_path_e_simili}
Given a face $f\in\mathcal{F}$ we define:
\begin{itemize}\itemsep0em
\item $q^f$ the \emph{upper path of $f$} as the minimum path with respect to $\preceq$ satisfying $f\subseteq Int_{q^f}$,
\item $\partial f\setminus q^f$ the \emph{lower boundary of $f$},
\item $e^f_r,e^f_\ell$ the extremal edges of the lower boundary of $f$ such that $e^f_r\in q^f_r$ and $e^f_\ell\in q^f_\ell$.
\end{itemize}  
\end{definition}

The next two lemmas will be used in Theorem~\ref{th:faces}'s proof.

\begin{lemma}\label{lemma:p_cap_f_is_a_path}
Let $p\in P$ and let $f$ be a face in $\mathcal{F}$. Then the intersection between $p$ and $f$ is a path.
\end{lemma}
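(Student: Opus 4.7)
The plan is to interpret the intersection as $p\cap\partial f$: since $f$ is a face of $U$ its interior is empty in $U$, so the edges of $p$ meeting $f$ are exactly those lying on the bounding cycle $\partial f$. The goal becomes to show that the edges of $p$ lying on $\partial f$ form a single (possibly empty) subpath of $p$. By the upper-path notation of Definition~\ref{def:upper_path_e_simili} together with the third item of Definition~\ref{def:binary_set_of_paths}, the cycle $\partial f$ is covered by $q^f\cup q^f_r\cup q^f_\ell$, and decomposes into three arcs, one per path, meeting at three corner vertices. The single-touch property then gives that $p\cap q^f$, $p\cap q^f_r$ and $p\cap q^f_\ell$ are each subpaths of $p$, and intersecting each with the corresponding arc of $\partial f$ is an intersection of two subpaths of the same path, hence again a subpath. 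So $p\cap\partial f$ is a union of at most three subpaths of $p$, one per arc of $\partial f$.

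I would then argue by contradiction. Suppose this union is disconnected on $p$; pick two consecutive components $\alpha_1,\alpha_2$ along $p$, separated by a subpath $\beta\subseteq p$ whose interior edges avoid $\partial f$, and let $v,u\in\partial f$ be the endpoints of $\beta$. Because $\beta\subseteq U$ and the interior of $f$ is empty in $U$, $\beta$ lies entirely in the exterior region of $\partial f$. I split into two cases. If $v$ and $u$ lie on a common arc $A\subseteq q$ of $\partial f$ (with $q\in\{q^f,q^f_r,q^f_\ell\}$), the sub-arc of $A$ between them is a subpath of the shortest path $q$, hence a shortest $vu$-path in $G$; but $\beta$ is also a shortest $vu$-path, being a subpath of the shortest path $p$. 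Uniqueness of shortest paths (secured by perturbation of weights) forces $\beta$ to coincide with this sub-arc, contradicting that $\beta$'s interior avoids $\partial f$. If instead $v$ and $u$ lie on two different arcs, the corner between those arcs must lie on $p$, for otherwise $p$ would be forced to cross one of $q^f,q^f_r,q^f_\ell$ in order to switch arcs via $\beta$, contradicting the non-crossing hypothesis; but then that corner sits on $p$ and glues $\alpha_1$ and $\alpha_2$ into one subpath, again contradicting our choice of two distinct components.

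The main obstacle will be the second case, namely deducing precisely that the corner must lie on $p$. The natural tool is non-crossing combined with the fact that adjacent arcs of $\partial f$ come from two paths meeting only at that corner vertex (by single-touch between those two paths), but the step is essentially a Jordan-curve argument and needs to be executed carefully in the planar embedding of $U$.
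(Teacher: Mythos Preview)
Your approach has two genuine gaps.

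\textbf{First, the three-arc decomposition of $\partial f$ is unjustified and potentially circular.} You invoke Definition~\ref{def:upper_path_e_simili} and the third item of Definition~\ref{def:binary_set_of_paths} to claim that every face boundary $\partial f$ decomposes into one arc from each of $q^f,q^f_r,q^f_\ell$. But the third item of Definition~\ref{def:binary_set_of_paths} only says that each internal node together with its two children bounds \emph{some} face; it does not say that \emph{every} face of $U$ arises this way, nor that the lower boundary of an arbitrary face is covered by the two children alone. In fact, Lemma~\ref{lemma:binarization} (whose proof uses Lemma~\ref{lemma:p_cap_f_is_a_path}) is precisely where the paper first establishes structural information about $\partial f$, so importing such structure here is circular. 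The lower boundary of a face could in principle pass through grandchildren or deeper descendants of $q^f$, so your ``at most three subpaths'' reduction is not available at this point.

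\textbf{Second, your Case~2 is, as you yourself note, incomplete.} You assert that if the endpoints of the gap $\beta$ lie on two different arcs, then the shared corner of those arcs must lie on $p$, appealing loosely to non-crossing and a Jordan-curve argument. But this deduction is not immediate: $\beta$ lives in the exterior of $\partial f$ and may wander far from the corner, and non-crossing with $q^f_r$ or $q^f_\ell$ does not by itself pin $p$ to the corner vertex. Without a precise argument here the proof does not close.

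For comparison, the paper avoids all of this by a single short region argument: take any two vertices $a,b\in p\cap\partial f$, let $\lambda$ be the arc of $\partial f$ between $a$ and $b$ on the side \emph{away} from $f$, and look at the region $R$ bounded by $\lambda\circ p[a,b]$. Any edge $e\in\lambda\setminus p$ lies on some $q\in P$; since $R$ is closed and $q$ cannot cross $p$, non-crossing forces $a,b\in q$, whence single-touch gives $q[a,b]=p[a,b]$ and hence $e\in p$, a contradiction. This uses no face-structure decomposition at all and needs only single-touch (not uniqueness of shortest paths, which you invoke in Case~1 but which the paper deliberately avoids).
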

\begin{proof}
If the intersection between $p$ and $f$ is empty or it consists in a single vertex, then the thesis holds. Thus we assume that there exist two vertices $a,b\in V(p\cap \partial f)$. We have to prove that $p[a,b]\subseteq \partial f$. 

Let $\lambda$ be the $ab$ path on $\partial f$ so that the region $R$ bounded by $\lambda\circ p[a,b]$ does not contain $f$. We have to prove that $\lambda=p[a,b]$. Let us assume by contradiction that there exists $e\in\lambda$ satisfying $e\not\in p$, and let $q\in P$ containing $e$. Being $R$ a closed region, then the non-crossing property implies $a,b\in q$. Thus the single-touch property assures $q[a,b]=p[a,b]$ and hence $e\not\in q$, absurdum.
\end{proof}

\begin{lemma}\label{lemma:binarization}
Let $f$ be a face in $\mathcal{F}$ and let $q$ be the upper path of $f$. Then $q$ intersects $\partial f$ in at least one edge.
\end{lemma}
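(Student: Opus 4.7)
The approach is proof by contradiction: I assume that $q$ and $\partial f$ share no edge and derive an impossibility, splitting on whether $q$ is an internal node or a leaf of the genealogy tree $T_g$. The binary structure enforced by Definition~\ref{def:binary_set_of_paths} together with the minimality of $q = q^f$ drives the contradiction in the internal case, while the absence of descendants does the work in the leaf case.

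If $q$ has two children $q_r, q_\ell$ in $T_g$, then by the \BNCS property these three paths enclose a face $f_0$ of $U$, and I would first verify that the closed region $Int_q$ decomposes as $Int_{q_r} \cup f_0 \cup Int_{q_\ell}$ with boundaries overlapping along $q_r$ and $q_\ell$. Because $f$ is a face of $U$ contained in $Int_q$, it must coincide with $f_0$ or sit inside one of $Int_{q_r}, Int_{q_\ell}$. The first option places $q$ on $\partial f$, contradicting the assumption; the other two options exhibit a strict descendant of $q$ in $T_g$ witnessing $f \subseteq Int_{q_r}$ or $f \subseteq Int_{q_\ell}$, contradicting the minimality of $q = q^f$ with respect to $\preceq$. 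If instead $q$ is a leaf, then $q$ has no strict descendants; by the non-crossing and single-touch properties any path $p \in P$ carrying an edge in the interior of $Int_q$ would satisfy $\gamma_p \subseteq \gamma_q$, hence $p \preceq q$, forcing $p = q$. Consequently the edges of $U$ lying in the closed region $Int_q$ are edges of $q$ together with at most some edges of the arc $\gamma_q \subseteq f^\infty$. Since $\partial f$ is a boundary walk enclosing the nonempty face $f$, it must contain a cycle; by assumption this cycle avoids $q$ and therefore lies inside $\gamma_q$, a simple arc — impossible.

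The main obstacle is pinning down the decomposition of $Int_q$ used in the internal-node case: one must check that after binarization $q_r$ and $q_\ell$ share a common endpoint on $\gamma_q$ and that together with $q$ they bound exactly the single face $f_0$, so that no further sliver face is hidden in $Int_q \setminus (Int_{q_r} \cup Int_{q_\ell})$. Lemma~\ref{lemma:p_cap_f_is_a_path} helps to keep the topological bookkeeping clean, ensuring that $f$ meets each of $q_r, q_\ell, q$ along a single subpath. Once these ingredients are in place, both cases collapse to a short minimality argument in the genealogy tree and a routine acyclicity observation on $\gamma_q$.
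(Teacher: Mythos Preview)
Your approach is correct and takes a genuinely different route from the paper's. The paper also argues by contradiction, but instead of decomposing $Int_q$ it takes a minimum set $R\subseteq P$ of paths whose union covers $\partial f$; using Lemma~\ref{lemma:p_cap_f_is_a_path}, minimality of $R$, and the definition of $q^f$ it shows that the paths in $R$ are pairwise $\preceq$-incomparable and that $q$ is their lowest common ancestor, so they are all children of $q$. Single-touch then forces $|R|\geq 3$ (two single-touch paths cannot together form a cycle), contradicting that $T_g$ is binary. Your argument instead exploits the third clause of the \BNCS definition directly: the face $f_0$ that $q,q_r,q_\ell$ are assumed to form lets you see at once that any bounded face of $U$ inside $Int_q$ is either $f_0$ itself (putting an edge of $q$ on $\partial f$) or lies in $Int_{q_r}\cup Int_{q_\ell}$ (violating minimality of $q^f$); your leaf case is just the degenerate instance of this same minimality idea. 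The paper's proof is a little more robust in that it uses only the binarity of $T_g$ and not the extra face condition in Definition~\ref{def:binary_set_of_paths}; your argument is more direct but leans on that structural assumption, and the one genuinely delicate point you flag---ruling out sliver faces in $Int_q\setminus(Int_{q_r}\cup Int_{q_\ell})$---is precisely the place where that assumption is cashed in.
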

\begin{proof}
Let us assume by contradiction that $q$ does not intersect $\partial f$ in at least one edge. 
Let $R=\{r_1,\ldots,r_h\}$ be the minimum set of paths such that $\partial f\subseteq \bigcup_{r\in R}E(r)$, clearly $q\not\in R$. By definition of upper path, $f\subseteq Ext_r$ for all $r\in Q$. Lemma \ref{lemma:p_cap_f_is_a_path} and minimality of $R$ imply that $(r\cap \partial f)\cap (r'\cap \partial f)=\emptyset$, for all $r\neq r'$. Thus $r\npreceq r'$ for all $r\neq r'\in R$, otherwise at least one $r\in R$ should satisfy $f\subseteq Int_r$.

Being $q$ the upper path of $f$, it holds that $q$ is the lowest common ancestor of $r$ and $r'$ for all $r\neq r'\in R$, i.e., all $r\in R$ are children of $q$. Finally, the single-touch property implies that $h\geq3$, otherwise two paths would form the cycle $f$, thus $q$ has at least 3 children, absurdum.
\end{proof}

\begin{definition}\label{def:solved_by_C}
Given a face $f$ in $\mathcal{F}$ and a path labeling $\mathcal{L}$ of $P$, we say that $f$ is \emph{solved by $\mathcal{L}$} if $\mathcal{L}(e^f_r)\cap \mathcal{L}(e^f_\ell)=\emptyset$.
\end{definition}

\begin{theorem}\label{th:faces}
Let $P$ be a \BNCS, and let $\mathcal{L}$ be a path labeling of $P$. If every face of $\mathcal{F}$ is solved by $\mathcal{L}$, then $\mathcal{L}$ is a forest labeling.
\end{theorem}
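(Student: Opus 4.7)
The plan is to argue by contradiction. Suppose $\mathcal{L}$ is not a forest labeling; then there exists a label $i\in[k]$ such that $U_i:=\bigcup_{\{p\in P\,|\,\mathcal{L}(p)=i\}}p$ contains a cycle. Viewed as a plane subgraph of $U$, $U_i$ therefore has at least one bounded face; let $C$ be the boundary of one such face (a simple cycle of $U_i$) and let $R$ be the closed region bounded by $C$. Then $R$ is tiled by faces of $\mathcal{F}$, and its interior contains no edge of $U_i$. Among all $f\in\mathcal{F}$ with $f\subseteq R$, I would select one whose upper path $q^f$ is \emph{minimum} in the partial order $\preceq$; such a minimum exists since $R$ is bounded and hence contains at least one bounded face of $U$. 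By Definition~\ref{def:binary_set_of_paths}, $q^f$ has two children $q^f_r, q^f_\ell$ in $T_g$, and together $q^f, q^f_r, q^f_\ell$ bound the face $f$; in particular the extremal edges $e^f_r\in q^f_r$ and $e^f_\ell\in q^f_\ell$ of the lower boundary are well-defined.

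The key step is to show that both $e^f_r$ and $e^f_\ell$ lie on $C$. Assume for contradiction that $e^f_r$ does not lie on $C$; then $e^f_r$ is strictly interior to $R$. Let $f'\in\mathcal{F}$ be the face incident to $e^f_r$ on the opposite side from $f$. Since crossing an edge interior to $R$ does not leave $R$, we have $f'\subseteq R$. Moreover, the path $q^f_r$ separates its two sides in the plane, and since $f\subseteq Ext_{q^f_r}$ we obtain $f'\subseteq Int_{q^f_r}$. By definition of the upper path, $q^{f'}\preceq q^f_r$, and since $q^f_r$ is a child of $q^f$ in the genealogy tree we also have $q^f_r\prec q^f$; altogether $q^{f'}\prec q^f$, contradicting the minimality of $q^f$. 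The symmetric argument for $e^f_\ell$ shows $e^f_\ell\in C$ as well.

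Consequently, both $e^f_r$ and $e^f_\ell$ belong to $C\subseteq U_i$, so each of them lies on some path of label $i$; therefore $i\in\mathcal{L}(e^f_r)\cap\mathcal{L}(e^f_\ell)$, which means $f$ is not solved by $\mathcal{L}$, contradicting the hypothesis. The main technical obstacle will be the careful justification that $f'\subseteq R$ and $f'\subseteq Int_{q^f_r}$ in the argument above: this relies on the Jordan curve theorem (the simple cycle $C$ bounds a closed topological disk) together with the standard separating property of a plane path, and on the fact that $f$ and $f'$ are genuinely distinct faces of $U$, which is ensured by Lemma~\ref{lemma:binarization} and the \BNCS structure.
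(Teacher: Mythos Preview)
Your proof is correct and reaches exactly the same endpoint as the paper's argument—namely, locating a face $f\subseteq R$ whose two lower-boundary extremal edges $e^f_r,e^f_\ell$ both lie on the monochromatic cycle, which contradicts ``$f$ is solved''. The route is different, and in fact cleaner. The paper first picks an arbitrary face $g\subseteq R_H$, passes to its upper path $q$, restricts to the arc $c=H\cap Int_q$, and then runs a descent on the auxiliary sets $\Delta_d=\{f\in\mathcal{D}\,|\,f\subseteq Int_{q^d}\}$ to produce a face $h$ whose entire lower boundary lies on $c$. You bypass this by going straight to the extremal case: take $f\subseteq R$ with $q^f$ minimal in $\preceq$; if either $e^f_r$ or $e^f_\ell$ were interior to $R$, crossing it would land in a face $f'\subseteq R$ with $q^{f'}\preceq q^f_r\prec q^f$, violating minimality. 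This avoids the $\mathcal{D},\Delta_d$ bookkeeping entirely and makes the role of the genealogy order more transparent.

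Two minor points. First, write \emph{minimal} rather than \emph{minimum}: $\preceq$ is only a partial order, so you are choosing a minimal element of a finite nonempty set, and your contradiction only needs minimality. Second, your invocation of Definition~\ref{def:binary_set_of_paths} to get children $q^f_r,q^f_\ell$ is consistent with the paper's conventions (Definition~\ref{def:upper_path_e_simili} already presumes they exist for every $f\in\mathcal{F}$), so no extra justification is needed there. Finally, note that your hypothesis ``the interior of $R$ contains no edge of $U_i$'' is never actually used: your minimality argument works for any simple cycle $C\subseteq U_i$, so you may drop the innermost-face setup if you wish.
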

\begin{proof}
Let us assume by contradiction that every face of $\mathcal{F}$ is solved by $\mathcal{L}$ and that there exists a simple cycle $H$ so that $\bigcap_{e\in E(H)}\mathcal{L}(e)\neq\emptyset$. Let $R_H$ be the region bounded by $H$. If $H$ is a face, then we have a trivial absurdum because $H$ would be solved by $\mathcal{L}$. Otherwise, let $g$ be a face in $R_H$, and let $q$ be the upper path of $g$. By Lemma \ref{lemma:binarization} and being $H$ a cycle, it holds that $q$ intersects $V(H)$. Let $c$ be the maximal subpath of $H$ that is in $Int_q$.

To finish the proof it suffices to show that there exists a face $h$ such that its lower boundary is a subpath of $c$. Indeed, if so, then $\mathcal{L}(e^h_r)\neq \mathcal{L}(e^h_\ell)$ because $h$ is solved by $\mathcal{L}$, therefore $\bigcap_{e\in E(H)}\mathcal{L}(e)=\emptyset$, absurdum.

Let $\mathcal{D}=\{f\in\mathcal{F}\,|\, f\subseteq Int_q\cap R_H$ and $\partial f\cap c\neq\emptyset\}$ and for each face $d\in\mathcal{D}$ let $\Delta_d=\{f\in\mathcal{D}\,|\, f\subseteq Int_{q^d}\}$, we recall that $q^d$ is the upper path of $d$. We have to find a face $h\in\mathcal{D}$ satisfying $|\Delta_h|=1$, i.e., $\Delta_h=\{h\}$; indeed, this implies that the lower boundary of $h$ is a subpath of $c$. 

Now, let $d\in\mathcal{D}$. If $|\Delta_d|=1$, then we have finished, otherwise we observe that for each $d'\in\Delta_d$, it holds that $\Delta_{d'}<\Delta_d$ because $d\not\in\Delta_{d'}$ and $\Delta_d>0$ because $d\in\Delta_d$. Hence there exists a face $h\in\mathcal{D}$ satisfying $|\Delta_h|=1$.
\end{proof}

\begin{figure}[h]
\captionsetup[subfigure]{justification=centering}
\centering
	\begin{subfigure}{6cm}
\begin{overpic}[width=6cm,percent]{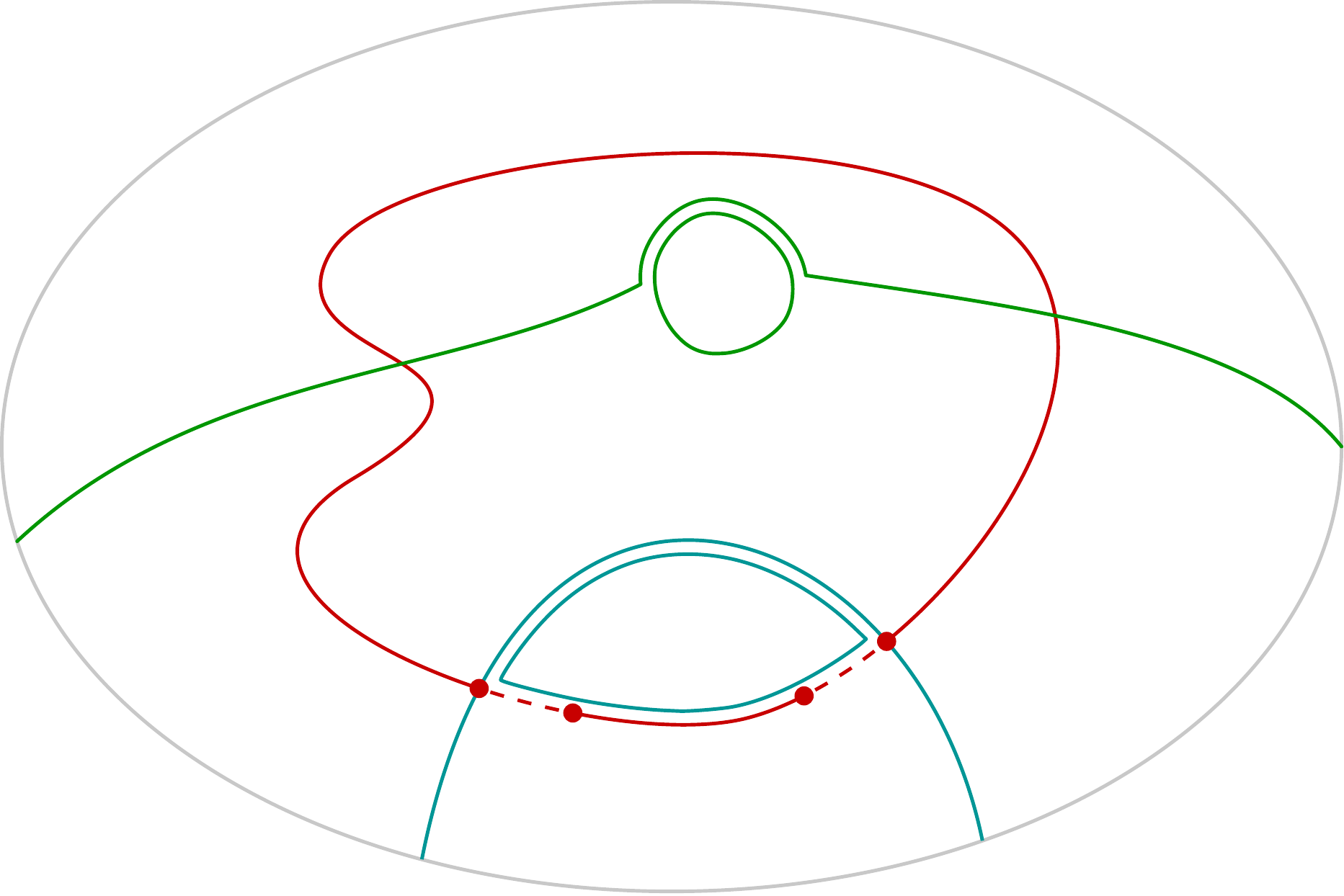}
\put(52.5,44.5){$g$}
\put(49,17.5){$h$}

\put(9,58){$f^\infty$}
\put(28,54){$H$}
\put(85,36){$q$}
\put(72.6,10){$q^h$}

\put(35.5,7){$e^h_\ell$}
\put(61.2,10.5){$e^h_r$}

\end{overpic}
\end{subfigure}
  \caption{faces $g,h$, paths $q,q^h$ and cycle $H$ used in the proof of Theorem \ref{th:faces}. Edges $e^h_r$ and $e^h_\ell$ are dotted.}
\label{fig:monochromatich_cycle}
\end{figure}

\section{A first easy upper bound of the Path Covering with Forests Number}\label{sec:15_ALL}

In this section we show that the \FCN of a \NCS $P$ is at most 15. In particular, we present algorithm \FTF that produces a forest labeling of $P$ which uses at most 15 labels. We stress that every result of this section will be used in Section~\ref{sec:4_ALL} to build algorithm \FF which gives an improved upper bound.

In Subsection~\ref{sec:15_outline} we describe the outline of algorithm $\FTF$, in particular we explain in which order we visit paths in $P$ by introducing the sets $\MAX$ and $\TOUCH$. In Subsection~\ref{sec:face_types} we classify the faces related to a path in $\MAX$ in three types. In Subsection~\ref{sec:15_type_I} we deal with faces of the first type and Subsection~\ref{sec:15_type_II_III} with faces of second and third type. Finally, in Subsection~\ref{sec:15_FTF} we exhibit algorithm $\FTF$ and we prove its correctness.

\subsection{Outline of the algorithm}\label{sec:15_outline}

From now on, unless otherwise stated, $P$ denotes a \BNCS. The algorithm \FTF finds a forest labeling of $P$ which uses at most 15 labels. 

\begin{definition}\label{def:max_and_touch}
Given $p\in P$, we define $\Touch_p=\{q\in P\,|$ $q\preceq p$ $\wedge$ $q$ and $p$ share at least one vertex$\}$ and $\Max_p=\{q\in P \,|$ $q\preceq p$ $\wedge$ $q$ is a maximal path w.r.t. $\preceq$ in $ P\setminus \Touch_p\}$. 
\end{definition}

Roughly speaking, $\Touch_p$ consists in all paths in $Int_p$ that ``touch'' $p$, and $\Max_p$ consists in all paths in $Int_p$ that are maximal w.r.t.~$\preceq$ after the removal of paths in $\Touch_p$.

By applying recursively Definition~\ref{def:max_and_touch}, we can separate paths in $P$ in levels. We start by setting $\MAX_0=\{p_1\}$, and then we define sets $\MAX_i$ and $\TOUCH_i$ recursively as follows:

$$
\begin{cases} 
p\in\TOUCH_i, &\text{if and only if $p\in\Touch_m$ for some $m\in\MAX_{i-1}$},\\
m\in\MAX_i, &\text{if and only if $m\in\Max_p$ for some $p\in\TOUCH_{i-1}$}.
\end{cases} 
$$

\begin{figure}[h]
\captionsetup[subfigure]{justification=centering}
\centering
	\begin{subfigure}{14cm}
\begin{overpic}[width=14cm,percent]{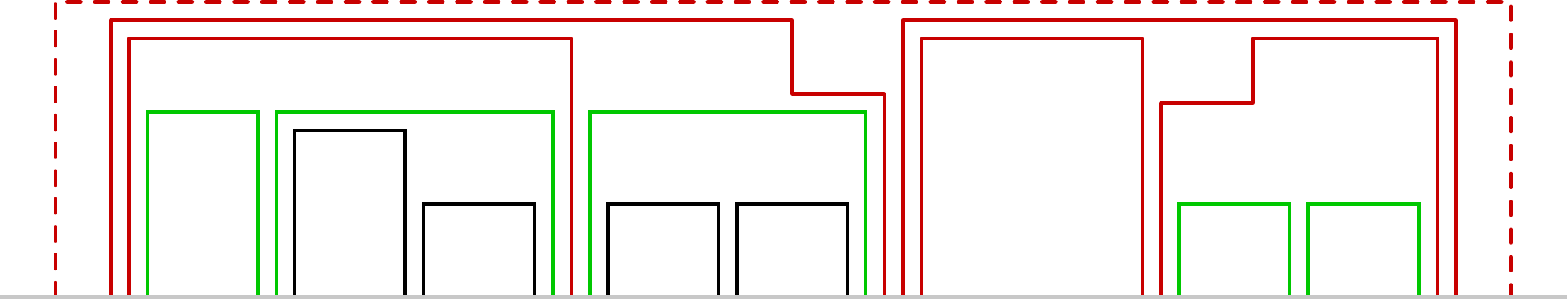}
\put(1.5,18){$p$}
\put(-2,0){$f^\infty$}

\end{overpic}
\end{subfigure}
  \caption{$p$ is the dotted path, paths in $\Touch_p$ are red (note that $p\in\Touch_p$), paths in $\Max_p$ are green. Thus if $p\in\MAX_{i-1}$, then red paths are in $\TOUCH_i$, green paths in $\MAX_i$ and black paths in $\TOUCH_{i+1}$.}
\label{fig:dismantle}
\end{figure}

We define also $\MAX=\bigcup_{i\in\mathbb{N}}\MAX_i$ and $\TOUCH=\bigcup_{i\in\mathbb{N}}\TOUCH_i$. For convenience, let $N\in\mathbb{N}$ satisfy $\MAX=\bigcup_{i\in {N}}\MAX_i$ and $\TOUCH=\bigcup_{i\in {N}}\TOUCH_i$; in few words $N$ is the last level. Note that $p\in\Touch_p$ for all $p\in P$, thus $\MAX\subseteq\TOUCH$. These sets are explained in Figure~\ref{fig:dismantle}.

\begin{figure}[h]
\begin{algorithm}[H]
\SetAlgorithmName{$\FTF$}{}{}
\renewcommand{\thealgocf}{}
 \caption{}
  \KwIn{a \NCS $P$}
 \KwOut{a forest labeling $\mathcal{L}:P\mapsto [15]$}
{
Transform $P$ from a \NCS to a \BNCS\;
For each path $p\in P$ define the global variable $\mathcal{L}(p)$ initialized to \NULL\;
For each path $p\in\MAX$ define the global variable $triple(p)$ initialized to \NULL\;
$triple(p_1)\leftarrow T_1$\;
\For{$i=0,\ldots,N$}{
\For{{\normalfont\textbf{each}} $p\in\MAX_i$}{
$\CT(p)$\tcp*{assign $\mathcal{L}(q)\in triple(p)$ for all $q\in\Touch_p$}
$\TM(p)$\tcp*{assign $triple(q)$ for all $q\in\Max_p$}
}
}
\Return $\mathcal{L}$\;
}
\end{algorithm}
\end{figure}

Before explaining the details of algorithm \FTF we state an important remark about two global variables assigned to  paths in $P$. We prefer to use global variables in order to avoid having too much entries in our functions.

\begin{remark}
In our algorithms we use global variables.
\begin{itemize}\itemsep0em
\item For each path $p\in\MAX$ we define the global variable $triple(p)$ that assumes as value a non-ordered triple of labels.

\item For each path $p\in P$ we define the global variable $\mathcal{L}(p)$ that assumes as value a unique label. The output of our main algorithms (algorithm \FTF and algorithm \FF) is $\mathcal{L}$. 
\item Both variables are initialized to \NULL and they do not change once assigned. 
\end{itemize}
\end{remark}

We define also five triples of labels: $T_1=\{1,2,3\}$, $T_2=\{4,5,6\}$, $T_3=\{7,8,9\}$, $T_4=\{10,11,12\}$ and $T_5=\{13,14,15\}$, and let $\mathcal{T}=\{T_1,T_2,T_3,T_4,T_5\}$. 

Now we can describe how algorithm \FTF works. Algorithm \FTF has $N$ iterations and it is based on the two functions $\CT(p)$ and $\TM(p)$. The former assigns one label of $triple(p)$ to $\mathcal{L}(q)$ for all $q\in\Touch_p$, the latter assigns one triple in $\mathcal{T}$ to $triple(q)$ for all $q\in\Max_p$. The assignments are set so that at iteration $i$ all faces in $\Touch_p\cup\Max_p$ are solved by $\mathcal{L}$, for all $p\in\MAX_i$.

We note that the partial order $\preceq$ is respected: if $p\preceq q$, then algorithm \FTF labels $p$ after $q$ is labeled. Let's start with a preliminary definition.


\subsection{Face types}\label{sec:face_types}

Given $p\in\MAX$, we classify all faces whose upper path is in $\Touch_p$ in three types as shown in Definition~\ref{def:face_types} and in Figure~\ref{fig:face_types}. We observe that, given a face $f$ of $\Touch_p\cup\Max_p$, the upper path $q$ of $f$ is in $\Touch_p$ because the children of each path in $\Max_p$ are not in $\Touch_p\cup\Max_p$. This fact is crucial to classify the faces in the following definition.

\begin{definition}\label{def:face_types}
Let $p\in\MAX$. Let $f$ be a face of $\Touch_p\cup\Max_p$ and let $q$ be the upper path of $f$. We say that
\begin{itemize}\itemsep0em
\item $f$ is \emph{of type I for $p$} if $q_r,q_\ell\in \Touch_p$,
\item $f$ is \emph{of type II for $p$} if $q_r,q_\ell\in \Max_p$. For convenience, we denote $q_r$ and $q_\ell$ by $m^f_r$ and $m^f_\ell$, respectively,
\item $f$ is \emph{of type III for $p$} if $q_r\in \Touch_p$ and $q_\ell\in \Max_p$ (or $q_r\in \Max_p$ and $q_\ell\in \Touch_p$).
\end{itemize}
If no confusion arises, then we omit the reference to $p$.
\end{definition}

\begin{figure}[h]
\captionsetup[subfigure]{justification=centering}
\centering
	\begin{subfigure}{4.1cm}
\begin{overpic}[width=4.1cm,percent]{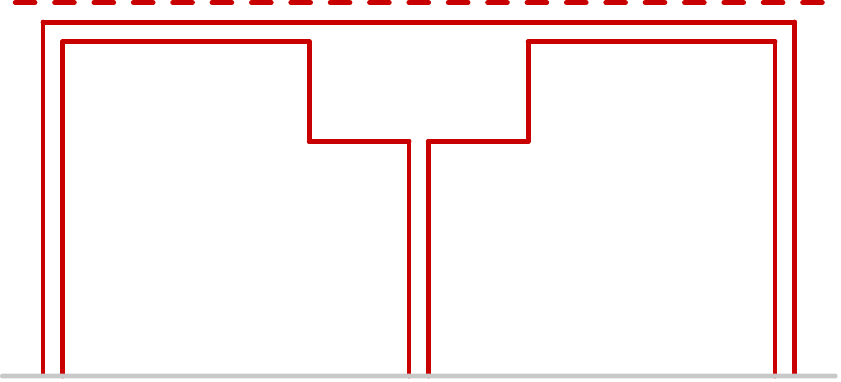}

\put(-5,42.5){$p$}
\put(-9,0){$f^\infty$}
\put(46,33){$f_{I}$}

\end{overpic}
\end{subfigure}
\qquad
	\begin{subfigure}{4.1cm}
\begin{overpic}[width=4.1cm,percent]{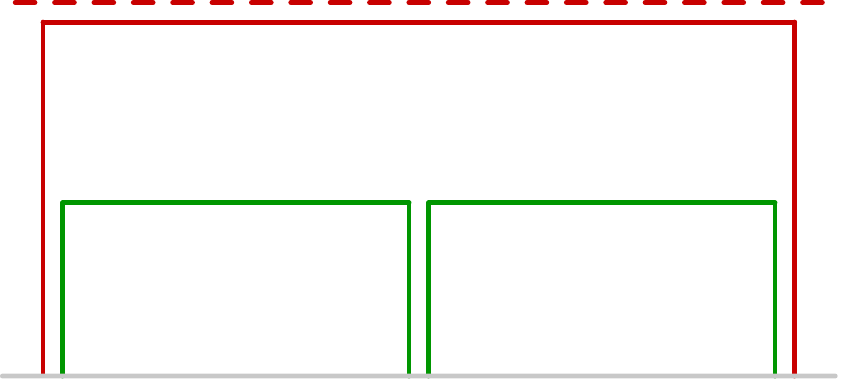}

\put(-5,42.5){$p$}
\put(-9,0){$f^\infty$}
\put(46,33){$f_{II}$}

\end{overpic}
\end{subfigure}
\qquad
	\begin{subfigure}{4.1cm}
\begin{overpic}[width=4.1cm,percent]{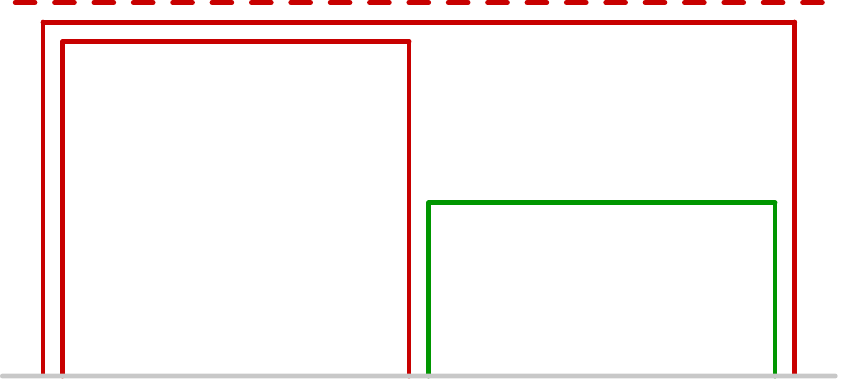}

\put(-5,42.5){$p$}
\put(-9,0){$f^\infty$}
\put(66,30){$f_{III}$}

\end{overpic}
\end{subfigure}
  \caption{$f_I$, $f_{II}$, $f_{III}$ are faces of type I, II, III, respectively, for $p$.}
\label{fig:face_types}
\end{figure}

\subsection{Dealing with faces of type I }\label{sec:15_type_I}

The main result of this subsection is given in Proposition~\ref{prop:Color}, that allows us to assign $\mathcal{L}(q)$ for all paths $q$ in $\Touch_p$ with the three labels in $triple(p)$ so that every face of type I for $p$ is solved by $\mathcal{L}$, for an arbitrary $p\in\MAX$. We obtain such a labeling by using the two algorithms \RC and \CT. Let's start with a definition, explained in Figure~\ref{fig:R(q)_L(q)}, about paths in $\Touch_p$.

\begin{definition}\label{def:R(q)_L(q)}
Let $p\in\MAX$. We define $\text{Tree}\Touch_p$ as the rooted subtree of $T_g$ induced by $\Touch_p$. Given $q\in\Touch_p$ we define $R^p(q)$ as the set of \emph{right descendants of $q$ w.r.t. $p$} and it is composed by all elements in $\text{Tree}\Touch_p$ that are in the path from $q$ to the rightmost leaf of the subtree of $\text{Tree}\Touch_p$ rooted at $q$. Similarly, we define $L^p(q)$ as the set of \emph{left descendants of $q$ w.r.t. $p$} and it is composed by all elements in $\text{Tree}\Touch_p$ that are in the path from $q$ to the leftmost leaf of the subtree of $\text{Tree}\Touch_p$ rooted at $q$. When no confusion arises, we denote $R^p(q)$ and $L^p(q)$ by $R(q)$ and $L(q)$, respectively.
\end{definition}

\begin{figure}[h]
\captionsetup[subfigure]{justification=centering}
\centering
	\begin{subfigure}{5.5cm}
\begin{overpic}[width=5.5cm,percent]{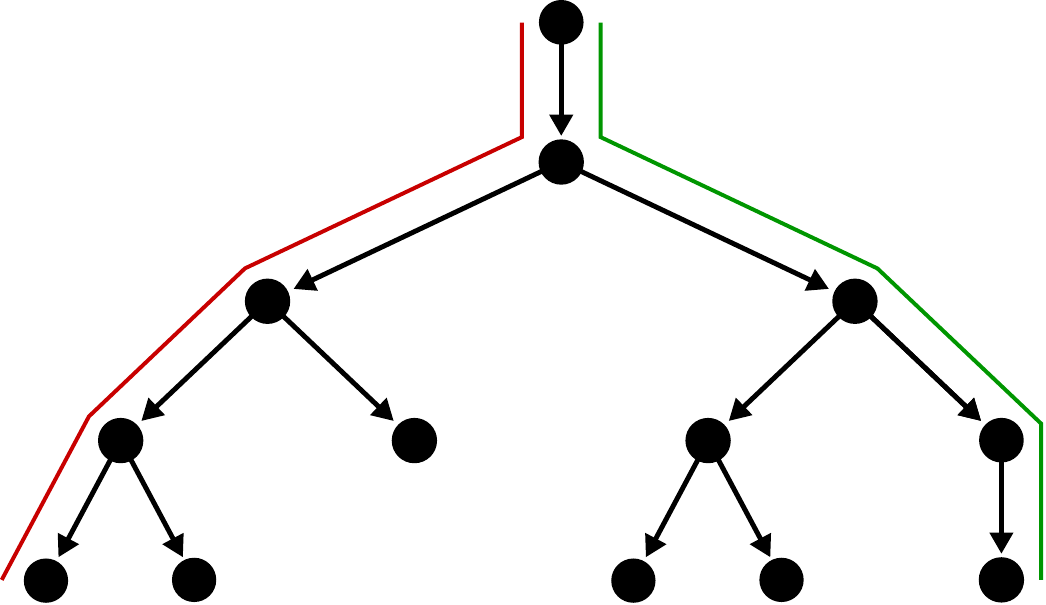}
\put(23,40){$L(q)$}
\put(70,40){$R(q)$}
\put(52.5,60){$q$}
\end{overpic}
\end{subfigure}
  \caption{the subtree of $TreeTouch_p$ rooted at $q$. In red vertices belonging to $L(q)$ and in green those to $R(q)$. Note that $q$ and its child are both in $L(q)\cap R(q)$.}
\label{fig:R(q)_L(q)}
\end{figure}

In order to solve faces of type I for $p$ we deal with intersecting paths. Given two paths $p$ and $q$ with $q\preceq p$, roughly speaking, it holds that if $q$ intersects $p$ on vertices, then $q$ splits $p$ into three subpaths: the right subpath containing $x_p$, the subpath $p\cap q$ (this is a path by the single-touch property), and the left subpath containing $y_p$. Note that the right subpath or the left subpath might be composed by one vertex. From this fact we obtain the following  remark, that can be formally proved by using Jordan's Curve Theorem~\cite{jordan} and by observing that $p\circ\gamma_p$ is a closed curve for every arbitrary path $p$ in $P$.

\begin{remark}\label{remark:crucial}
Let $p,q,q'\in P$ satisfy $q\preceq p$, $q'\preceq p$ and $p\cap q\neq\emptyset$. Then $q$ splits $p$ into three subpaths: $p=r\circ p\cap q\circ \ell$, with $x_p\in r$ and $y_p\in\ell$. 
The following statements hold:
\begin{itemize}\itemsep0em
\item if $q'\cap r\neq\emptyset$, then $q\lhd q'$,
\item if $q'\cap \ell\neq\emptyset$, then $q'\lhd q$.
\end{itemize}
\end{remark}

We want to apply the previous result to faces of type I for $p$. Given a face $f$ of type I for $p$, we observe that both $e^f_r$ and $e^f_\ell$ have an extremal vertices on $p$ by their definition. Thus if a path $q$ contains either $e^f_r$ or $e^f_\ell$, then $q\in\Touch_p$. The following lemma, whose proof is strictly based on Remark~\ref{remark:crucial}, explains which paths contain $e^f_r$ and which paths contains $e^f_r$. This result is the key to label all paths in $\Touch_p$ with three labels in order to solve all face of type I for $p$ (see Lemma~\ref{lemma:RC} and Proposition~\ref{prop:Color}).

For an edge $e$ we define $P(e)=\{p\in P\ |\ e\in p\}$.

\begin{lemma}\label{lemma:right_left}
Let $p\in\MAX$. Let $f$ be a face of type I for $p$ and let $q$ be the upper path of $f$, then $P(e^f_r)\subseteq L(q_r)$ and $P(e^f_\ell)\subseteq R(q_\ell)$.
\end{lemma}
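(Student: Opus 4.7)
I plan to prove the first containment $P(e^f_r) \subseteq L(q_r)$; the symmetric statement $P(e^f_\ell) \subseteq R(q_\ell)$ will follow by an analogous argument after swapping left and right. Fix any $r \in P(e^f_r)$. The argument has three logical steps, with the core being an application of Remark~\ref{remark:crucial}.

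First, I would establish $r \preceq q_r$ in the genealogy tree. Since both $r$ and $q_r$ contain $e^f_r$, the single-touch property makes $r \cap q_r$ a non-empty subpath, and the non-crossing property forces $r$ and $q_r$ to be $\preceq$-comparable. To exclude $q_r \prec r$, note that $e^f_r$ lies on the portion of $\partial f$ separating $q$ from $q_r$, so $e^f_r \notin q$; and by the ``forms a face'' clause in Definition~\ref{def:binary_set_of_paths}, no strict ancestor of $q_r$ can traverse $e^f_r$ without cutting through the face $f$, contradicting non-crossing.

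Next, I would show $r \in \Touch_p$. Let $a$ be the endpoint of $e^f_r$ lying on $q$, i.e., the corner of $f$ on the $q_r$-side. Since $f$ is of type~I for $p$ and the \BNCS structure (Definition~\ref{def:binary_set_of_paths}) requires $q, q_r, q_\ell$ to form a face, this corner $a$ propagates up the chain of nested faces in the genealogy tree all the way onto $p$. Since $r$ contains $e^f_r$ and therefore passes through $a$, the paths $r$ and $p$ share the vertex $a$, giving $r \in \Touch_p$.

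For the core step, I would show $r \in L(q_r)$ by induction on the $T_g$-depth of $r$ below $q_r$. The base case $r = q_r$ is immediate. For the inductive step, let $c_\ell$ and $c_r$ denote the two $T_g$-children of $q_r$, labelled so that $c_\ell \lhd c_r$, and apply Remark~\ref{remark:crucial} with reference path $q_r$, taking $q := r$ and $q' := c_r$. Since $q_r \cap r$ contains $e^f_r$, which is incident to $x_{q_r}$, the ``initial'' subpath $\alpha$ in the splitting $q_r = \alpha \circ (q_r \cap r) \circ \beta$ collapses to $\{x_{q_r}\}$. Because $c_r$ is the right child of $q_r$ in $T_g$ and shares the endpoint $x_{q_r}$, we have $c_r \cap \alpha \neq \emptyset$, so Remark~\ref{remark:crucial} yields $r \lhd c_r$. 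This places the endpoints of $r$ inside $\gamma_{c_\ell}$, hence $r \preceq c_\ell$. By the induction hypothesis applied with $c_\ell$ in place of $q_r$, together with the observation that the leftmost $\text{TreeTouch}_p$-child of $q_r$ lies in the $T_g$-subtree of $c_\ell$ (by $\lhd$-monotonicity across subtrees), we conclude $r \in L(q_r)$.

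The main obstacle will be the second step: carefully checking that the corner vertex $a$ actually lies on $p$, which requires tracing the \BNCS nested-face structure from $q$ all the way up to $p$ and using type~I precisely. The third step, by contrast, is essentially mechanical once Remark~\ref{remark:crucial} is set up correctly: the placement of $e^f_r$ at the $x_{q_r}$-end of $q_r$ automatically channels every descendant containing $e^f_r$ into the left subtree at each level.
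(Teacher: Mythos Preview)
Your Step~3 rests on a geometric claim that is false: the edge $e^f_r$ is \emph{not} incident to $x_{q_r}$. The vertex $x_{q_r}$ lies on the infinite face of $G$, whereas $e^f_r$ is the extremal edge of the lower boundary of $f$, sitting at the point where $q_r$ separates from $q$ (and, as the paper observes just before the lemma, one of its endpoints is on $p$). Consequently the decomposition $q_r=\alpha\circ(q_r\cap r)\circ\beta$ does \emph{not} have $\alpha=\{x_{q_r}\}$: the subpath of $q_r$ on the $x_{q_r}$-side of $e^f_r$ is typically long, and this is exactly the part that carries the intersection $q_r\cap p$. For the same reason, your assertion that the right $T_g$-child $c_r$ ``shares the endpoint $x_{q_r}$'' and therefore meets $\alpha$ has no force. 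With the correct picture, Remark~\ref{remark:crucial} applied as you describe gives no information about whether $r\lhd c_r$ or $c_r\lhd r$.

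The paper's argument uses a different anchor. Rather than trying to push $r$ to the left directly, it shows that the edge $e^f_r$ itself lies in the left $\TreeTouch_p$-child $c_\ell$ of $q_r$, and then recurses. The key step is this: let $z$ be the endpoint of $e^f_r$ \emph{not} on $p$; by single-touch, the subpath $\lambda=q_r[z,y_{q_r}]$ is disjoint from $p$. If $e^f_r$ belonged to $c_r$, then Remark~\ref{remark:crucial} would force $c_\ell\cap q_r\subseteq\lambda$, whence $c_\ell$ could not touch $p$, contradicting $c_\ell\in\Touch_p$. This is precisely the mechanism you are missing: what drives paths containing $e^f_r$ into $L(q_r)$ is not the position of $e^f_r$ relative to $x_{q_r}$, but the fact that the portion of $q_r$ beyond $e^f_r$ (toward $y_{q_r}$) is disjoint from $p$, so every member of $\Touch_p$ below $q_r$ must meet $q_r$ on the other side. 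Your Steps~1 and~2 are heading in a reasonable direction, but Step~3 needs to be rebuilt around this observation.
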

\begin{proof}
We prove that $P(e^f_r)\subseteq L(q_r)$, by symmetry, it also proves that $P(e^f_\ell)\subseteq R(q_\ell)$. First of all, $e^f_r\in q_r$ by Definition~\ref{def:upper_path_e_simili}. Let $z$ be the extremal vertex of $e^f_r$ not belonging to $p$. Then the subpath $\lambda$ of $q_r$ from $z$ to $y_{q_r}$ does not intersect $p$ on vertices because of the single-touch property. 

Let us assume that $q_r$ has two children $c_r$ and $c_\ell$ belonging to $\Touch_p$, with $c_\ell\lhd c_r$. Indeed, if $q_r$ has no children belonging to $\Touch_p$, then the thesis is trivial, and if $q_r$ has exactly one child belonging to $\Touch_p$, then it belongs to $L(q_r)$ by definition.

Being $\lambda\cap p=\emptyset$, then $e^f_r$ belongs to $c_\ell$; indeed, if $e^f_r$ belongs to $c_r$, then Remark~\ref{remark:crucial} would imply $c_\ell\cap q_r=\lambda$, thus $c_\ell\not\in\Touch_p$, absurdum. By repeating recursively this reasoning, the thesis follows.
\end{proof}

Now we introduce recursive algorithm \RC whose entries are $p,q,\sigma$, where $p$ is a path in $\MAX$, $q$ is a path in $\Touch_p$ and $\sigma$ is an permutation of $triple(p)$. By calling $\RC(p,q,\sigma)$, we label all paths in $\Touch_p$ belonging to $Int_q$. It holds that $q$ is labeled with the first label in $\sigma$. Then if $q$ has one child in $\Touch_p$, then $\sigma$ is passed to its child without permutations. If $q$ has two children in $\Touch_p$, then $\sigma$ is passed to its children with permutations. An example of how these permutations change is given in Figure~\ref{fig:rightcolor_and_leftcolor}.

\begin{figure}[h]
\begin{algorithm}[H]
\SetAlgorithmName{$\RC(p,q,(c_1,c_2,c_3))$}{}{}
\renewcommand{\thealgocf}{}
\caption{}
 \KwIn{a path $p$ in $\MAX$, a path $q$ in $Touch_p$ and a permutation $( c_1,c_2,c_3)$ of $triple(p)$}
  \KwOut{assign $\mathcal{L}(q')\in triple(p)$ for all $q'\in\Touch_p$ such that $q'\preceq q$}
{$\mathcal{L}(q)\leftarrow c_1$\label{line:assign_color}\;
\If{$q$ has one child $q'$ in $\Touch_p$}{$\RC(p,q',( c_1,c_2,c_3) )$\;}\label{line:one_child}
\If{$q$ has two children in $\Touch_p$}{
$\RC(p,q_r,( c_1,c_3,c_2))$\;\label{line:call_Color_right}
$\RC(p,q_\ell,( c_2,c_1,c_3))$\;\label{line:call_Color_left}
}
}
\end{algorithm}
\end{figure}

\begin{figure}[h]
\captionsetup[subfigure]{justification=centering}
\centering
\begin{subfigure}{7cm}
\begin{overpic}[width=7cm,percent]{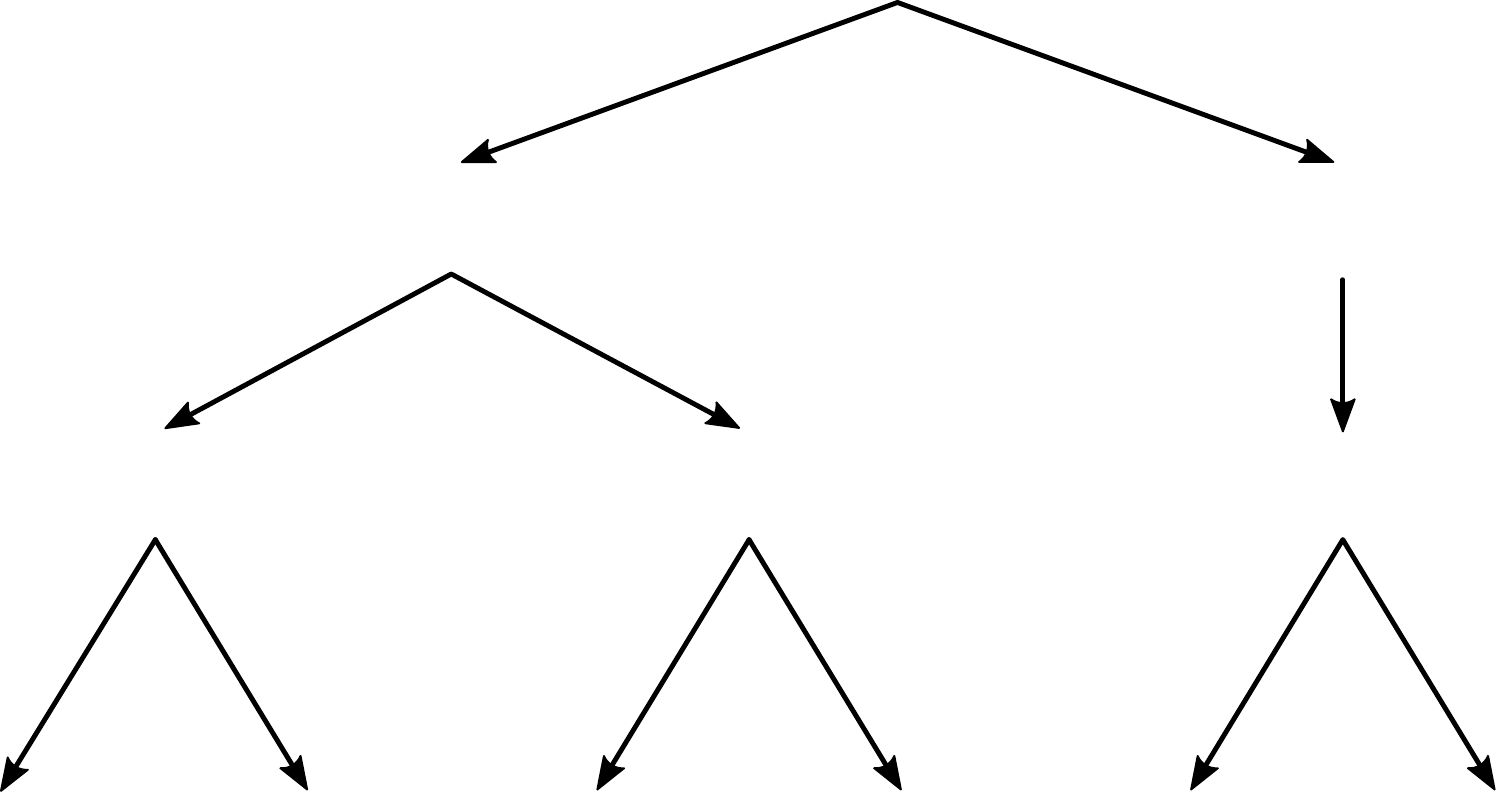}
\put(51.3,55.5){$( {\color{red} 1},2,3)$}

\put(21.6,37){$( {\color{red} 2},1,3)$}
\put(81.1,37){$( {\color{red} 1},3,2)$}

\put(1.8,19){$( {\color{red} 1},2,3)$}
\put(41.4,19){$( {\color{red} 2},3,1)$}
\put(81.1,19){$( {\color{red} 1},3,2)$}

\put(-8.6,-5){$( {\color{red} 2},1,3)$}
\put(11.8,-5){$( {\color{red} 1},3,2)$}
\put(31,-5){$( {\color{red} 3},2,1)$}
\put(51.5,-5){$( {\color{red} 2},1,3)$}
\put(70.9,-5){$( {\color{red} 3},1,2)$}
\put(91.4,-5){$( {\color{red} 1},2,3)$}

\put(-6,55.5){\RC}
\end{overpic}
\end{subfigure}
\vspace{3mm}
\caption{assume that $triple(p)=\{1,2,3\}$. Example of permutations of $triple(p)$ during the call $\RC(p,q,( 1,2,3))$, where $q$ is the vertex on the top. The first element of the permutation, in red, denotes also the label assigned to the path.}
\label{fig:rightcolor_and_leftcolor}
\end{figure}

The recursive calls of \RC change the permutation of labels in order to obtain the following lemma. In Subsection~\ref{sec:4_type_I} we will introduce its symmetric version called algorithm \LC.

\begin{lemma}\label{lemma:RC}
Let $p\in\MAX$, let $( c_1,c_2,c_3)$ be any ordering of $triple(p)$ and let $q\in\Touch_p$. If we call $\RC(p,q,( c_1,c_2,c_3))$, then $\mathcal{L}(R(q))=\{c_1\}$ and $\mathcal{L}(L(q))\subseteq\{c_1,c_2\}$.
\end{lemma}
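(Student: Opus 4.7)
The plan is to prove the lemma by structural induction on the subtree of $\text{Tree}\Touch_p$ rooted at $q$, following the recursive structure of algorithm $\RC$ itself. The invariants to carry are exactly those stated: after $\RC(p,q,(c_1,c_2,c_3))$, we have $\mathcal{L}(R(q))=\{c_1\}$ and $\mathcal{L}(L(q))\subseteq\{c_1,c_2\}$. Crucially, I do not need a third invariant on the last entry of the permutation, because the recursion on children only depends on the first two positions in the chosen orderings.

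The base case is when $q$ is a leaf of $\text{Tree}\Touch_p$, i.e.~$q$ has no children in $\Touch_p$. Then $R(q)=L(q)=\{q\}$ by Definition~\ref{def:R(q)_L(q)}, and $\RC$ only executes line~\ref{line:assign_color}, setting $\mathcal{L}(q)=c_1$. Both invariants hold trivially. The one-child case is equally direct: if $q$ has a unique child $q'$ in $\Touch_p$, then $R(q)=\{q\}\cup R(q')$ and $L(q)=\{q\}\cup L(q')$; since the recursive call passes the same permutation $(c_1,c_2,c_3)$, the inductive hypothesis for $q'$ combined with $\mathcal{L}(q)=c_1$ yields the desired inclusions.

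The interesting case is when $q$ has two children $q_r,q_\ell\in\Touch_p$. Here $R(q)=\{q\}\cup R(q_r)$ and $L(q)=\{q\}\cup L(q_\ell)$ by definition of right/left descendants. Algorithm $\RC$ calls $\RC(p,q_r,(c_1,c_3,c_2))$ and $\RC(p,q_\ell,(c_2,c_1,c_3))$. Applying the inductive hypothesis to the right call with the permutation whose first entry is still $c_1$, I obtain $\mathcal{L}(R(q_r))=\{c_1\}$, so $\mathcal{L}(R(q))=\{c_1\}\cup\{c_1\}=\{c_1\}$. Applying it to the left call with the permutation $(c_2,c_1,c_3)$, the inductive hypothesis gives $\mathcal{L}(L(q_\ell))\subseteq\{c_2,c_1\}=\{c_1,c_2\}$, and therefore $\mathcal{L}(L(q))=\{c_1\}\cup\mathcal{L}(L(q_\ell))\subseteq\{c_1,c_2\}$.

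I expect no real obstacle here: the proof is essentially a verification that the two specific permutations chosen in lines~\ref{line:call_Color_right}--\ref{line:call_Color_left} of $\RC$ were designed to keep $c_1$ in first position along the rightmost path and to place $c_2$ first on the left child while retaining $c_1$ in second position so that the $L$-invariant survives. The one mildly subtle point is to make sure to state the invariants \emph{symmetrically enough}: the $R$-invariant is exact equality $\{c_1\}$, while the $L$-invariant is only the inclusion $\subseteq\{c_1,c_2\}$. This asymmetry is what allows $c_3$ never to appear along $L(q)$ despite being actively used on the left subtree beneath $q_\ell$'s own right descendants; phrasing the induction with the weaker inclusion on the left is what makes the step go through cleanly.
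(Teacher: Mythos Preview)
Your proof is correct and takes essentially the same approach as the paper: both arguments track how the first entry of the permutation is preserved along $R(q)$ (via Line~\ref{line:one_child} and Line~\ref{line:call_Color_right}) and how the first two entries are preserved as a set along $L(q)$ (via Line~\ref{line:one_child} and Line~\ref{line:call_Color_left}). The paper states this as a direct observation in two sentences, whereas you unfold it into an explicit structural induction; the content is the same.
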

\begin{proof}
To prove that $\mathcal{L}(R(q))=\{c_1\}$, we observe that the first element of the permutation does not change for all the calls in $R(q)$ because of Line \ref{line:one_child} and Line \ref{line:call_Color_right}. 

Similarly, to prove that $\mathcal{L}(L(q))\subseteq\{c_1,c_2\}$, we observe that the first and the second element of the permutation are the same (possibly swapped) for all the calls in $L(q)$ because of Line \ref{line:one_child} and Line \ref{line:call_Color_left}.
\end{proof}

By using algorithm \RC, we introduce the following compact algorithm.

\begin{figure}[h]
\begin{algorithm}[H]
\SetAlgorithmName{$\CT(p)$}{}{}
\renewcommand{\thealgocf}{}
\caption{}
 \KwIn{a path $p$ in $\MAX$}
 \KwOut{assign $\mathcal{L}(q)\in triple(p)$ for all $q$ in $\Touch_p$ so that every face of type I for $p$ is solved by $\mathcal{L}$}
{Let $(c_1,c_2,c_3)$ be an arbitrary ordering of $triple(p)$\;
$\RC(p,p,(c_1,c_2,c_3))$\tcp*{assign $\mathcal{L}(q)\in triple(p)$ for all $q\in\Touch_p$}
}
\end{algorithm}
\end{figure}

\begin{prop}\label{prop:Color}
Let $p\in\MAX$, if we call $\CT(p)$, then $\mathcal{L}(q)\in triple(p)$ for all $q\in\Touch_p$ and every face of type I for $p$ is solved by $\mathcal{L}$.
\end{prop}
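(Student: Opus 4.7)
The plan is to combine Lemma~\ref{lemma:RC} (which controls which labels appear on $R(q)$ and $L(q)$ after a call to $\RC$) with Lemma~\ref{lemma:right_left} (which tells us where the paths containing $e^f_r$ and $e^f_\ell$ live in $TreeTouch_p$) to force $\mathcal{L}(e^f_r)$ and $\mathcal{L}(e^f_\ell)$ into disjoint subsets of $triple(p)$.

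First I would handle the easy statement. Since $\CT(p)$ triggers $\RC(p,p,(c_1,c_2,c_3))$ where $(c_1,c_2,c_3)$ is an ordering of $triple(p)$, and $\RC$ only writes $\mathcal{L}(q)\leftarrow c_1$ for a first coordinate of a permutation of $triple(p)$, every label it assigns lies in $triple(p)$. A straightforward induction on the recursion shows that every $q\in\Touch_p$ is reached by some recursive call (a path $q\in\Touch_p$ is in the subtree of $TreeTouch_p$ rooted at $p$, and $\RC$ descends into every child that lies in $\Touch_p$), so $\mathcal{L}(q)\in triple(p)$ for all $q\in\Touch_p$.

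For the main statement, let $f$ be a face of type~I for $p$ and let $q$ be its upper path; by Definition~\ref{def:face_types} both children $q_r,q_\ell$ of $q$ lie in $\Touch_p$. During the execution of $\CT(p)$ there is a unique call $\RC(p,q,(a_1,a_2,a_3))$ with $(a_1,a_2,a_3)$ some permutation of $triple(p)$. Since $q$ has two children in $\Touch_p$, this call triggers $\RC(p,q_r,(a_1,a_3,a_2))$ and $\RC(p,q_\ell,(a_2,a_1,a_3))$. Applying Lemma~\ref{lemma:RC} to each of these two calls yields
\[
\mathcal{L}(L(q_r))\subseteq\{a_1,a_3\}\qquad\text{and}\qquad \mathcal{L}(R(q_\ell))=\{a_2\}.
\]
Now Lemma~\ref{lemma:right_left} gives $P(e^f_r)\subseteq L(q_r)$ and $P(e^f_\ell)\subseteq R(q_\ell)$, hence $\mathcal{L}(e^f_r)\subseteq\{a_1,a_3\}$ and $\mathcal{L}(e^f_\ell)\subseteq\{a_2\}$. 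The two label sets are disjoint, so $f$ is solved by $\mathcal{L}$.

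I expect no real obstacle here: the design of the permutations swapped by $\RC$ in lines~\ref{line:call_Color_right} and~\ref{line:call_Color_left} is already tuned precisely so that the first label of $q_\ell$'s permutation ($a_2$) avoids both the first and the third label of $q_r$'s permutation ($a_1$ and $a_3$), which are exactly the labels that can appear on $L(q_r)$ by Lemma~\ref{lemma:RC}. The only point requiring minor care is observing that $\CT(p)$ does invoke $\RC$ at every $q\in\Touch_p$ that can be the upper path of some face of type~I, which follows from the fact that such a $q$ is necessarily in the subtree of $TreeTouch_p$ rooted at $p$ and $\RC$ recurses into both children whenever both are in $\Touch_p$.
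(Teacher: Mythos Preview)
Your proof is correct and follows essentially the same approach as the paper: you use the recursive call $\RC(p,q,(a_1,a_2,a_3))$ to obtain $\mathcal{L}(L(q_r))\subseteq\{a_1,a_3\}$ and $\mathcal{L}(R(q_\ell))=\{a_2\}$ via Lemma~\ref{lemma:RC}, and then apply Lemma~\ref{lemma:right_left} to conclude $\mathcal{L}(e^f_r)\cap\mathcal{L}(e^f_\ell)=\emptyset$. The paper's argument is identical up to notation.
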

\begin{proof}
It's clear that if we call $\CT(p)$, then all paths in $\Touch_p$ are labeled with labels in $triple(p)$. Now let $f$ be a face of type I for $p$ and let $q$ be the upper path of $f$. By definition of face of type I, $q\in\Touch_p$. We have to prove that $f$ is solved by $\mathcal{L}$. Note that algorithm \CT calls recursively algorithm \RC, thus by calling $\CT(p)$ we arrive to call $\RC(p,q,( c_1,c_2,c_3))$, for some $( c_1,c_2,c_3)$ permutation of $triple(p)$.

The call $\RC(p,q,( c_1,c_2,c_3))$ implies the call $\RC(p,q_r,( c_1,c_3,c_2) )$ and the call $\RC(p,q_\ell,( c_2,c_1,c_3) )$. Lemma~\ref{lemma:RC} implies $\mathcal{L}(L(q_r))\subseteq\{c_1,c_3\}$ and $\mathcal{L}(R(q_\ell))=\{c_2\}$. Being $\mathcal{L}(e^f_r)\subseteq \mathcal{L}(L(q_r))$ and $\mathcal{L}(e^f_\ell)\subseteq \mathcal{L}(R(q_\ell))$ by Lemma~\ref{lemma:right_left}, then $\mathcal{L}(e^f_r)\cap \mathcal{L}(e^f_\ell)=\emptyset$, hence $f$ is solved by $\mathcal{L}$.
\end{proof}

\subsection{Dealing with faces of type II and type III}\label{sec:15_type_II_III}

In this subsection we build algorithm $\TM(p)$ that assigns $triple(q)$ for all $q\in\Max_p$, where $p\in\MAX$. The consequences of the execution of $\TM(p)$ are explained in Proposition~\ref{prop:TripleMax} and they concern with paths in $\Touch_p$ and $\Max_p$ that \emph{interfere with} (see Definition~\ref{def:interfere}) the same face. 

To solve the faces of type I for $p$ we worked only with paths in $\Touch_p$, because given $f$ of type I for $p$, then $q$ contains $e^f_r$ or $e^f_\ell$ only if $q\in\Touch_p$. To deal with faces of type II and type III we have to work also with paths in $\Max_p$. For this reason we introduce the concept of \emph{interfere with}.

\begin{definition}\label{def:interfere}
Let $f\in\mathcal{F}$, we say that $q\in P$ \emph{interferes with $f$} if $q$ contains either $e^f_r$ or $e^f_\ell$.
\end{definition}

The structure of faces of type II for $p$ is easy. Indeed, given a face $f$ of type II for $p$, then $m^f_r$ and $m^f_\ell$ interfere with $f$ and it does not exist any $q\in \Max_p\cup\Touch_p$ interfering with $f$ so that $q\not\in\{m^f_r,m^f_\ell\}$. Clearly, some paths in $\Touch_{m^f_r}$ and $\Touch_{m^f_\ell}$ may interfere with $f$, but they are labeled by $\CT(m^f_r)$ and $\CT(m^f_\ell)$ (see algorithm \FTF). Thus to solve the face $f$ it suffices to set $triple(m^f_r)\cap triple(m^f_\ell)=\emptyset$. 

Dealing with faces of type III is more complex. For convenience, we define $\Max_p^{II}=\{m\in\Max_p \,|\, f_m$ is a face of type II for $p\}$ and $\Max_p^{III}=\{m\in\Max_p \,|\, f_m$ is a face of type III for $p\}$.

To explain the following definition we note that, given a path $m\in\MAX\setminus\{p_1\}$, $m$ forms a face $f$ with its parent and its sibling (the other child of the parent). In order to solve $f$, we deal with the extremal edge of the lower boundary of $f$ belonging to $m$. Clearly, this does not happen for $p_1$, for which we choose as arbitrary edge the edge adjacent to $x_1$.

\begin{definition}\label{def:special_edge}
Given $m\in\MAX\setminus\{p_1\}$ we define $f_m\in\mathcal{F}$ the face whose upper path is the parent of $m$ in $T_g$. Moreover, we define $\se^m$ the edge among $e^{f_m}_r$ and $e^{f_m}_\ell$ belonging to $m$. Finally, we define $\se^{p_1}$ as the edge in $p_1$ adjacent on $x_1$.
\end{definition}

Note that, given $m\in\Max_p^{III}$, $\se^m$ belongs only to $m$---and so, possibly, to some paths in $\Touch_m$---and there are not other paths in $\Touch_p\cup\Max_p$ containing $\se^m$. But there may exist a path $m'\in\Max^{II}_p\cup\Max_p^{III}$ distinct from $m$ interfering with $f$. Thus we introduce the relation $\dashrightarrow$ and we study its structure in Lemma~\ref{lemma:forest}. We observe that $\dashrightarrow$ is not transitive.


\begin{definition}\label{def:dasharrow}
Let $p\in\MAX$. Given $m,m'\in\Max_p$ we write $m\dashrightarrow m'$ if $m$ interferes with $f_{m'}$.
\end{definition}

Figure~\ref{fig:dasharrow} explains Definition~\ref{def:dasharrow}, note that $m_3\dashrightarrow m_5$ even if $m_3$ and $m_5$ are vertex disjoint.

\begin{figure}[h]
\captionsetup[subfigure]{justification=centering}
\centering
	\begin{subfigure}{8cm}
\begin{overpic}[width=8cm,percent]{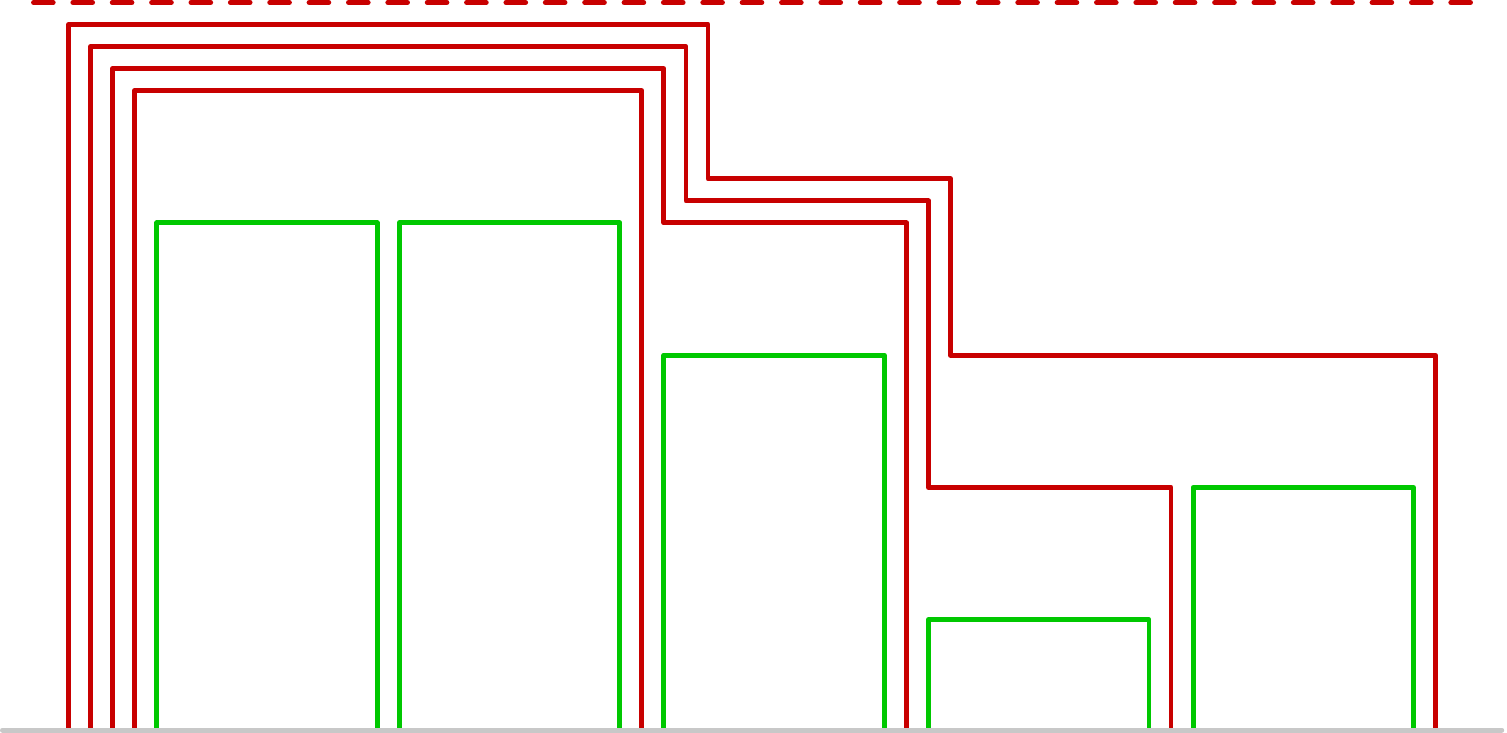}
\put(24,37){$f$}
\put(48,28){$f_{m_3}$}
\put(65,10.5){$f_{m_4}$}
\put(75,19.5){$f_{m_5}$}

\put(-1.3,47){$p$}
\put(-4.5,0){$f^\infty$}

\put(15,30){$m_1$}
\put(31,30){$m_2$}
\put(49,21.5){$m_3$}
\put(66.5,4){$m_4$}
\put(84,12.5){$m_5$}

\end{overpic}
\end{subfigure}
\qquad\qquad
\begin{subfigure}{2cm}
\begin{overpic}[width=2cm,percent]{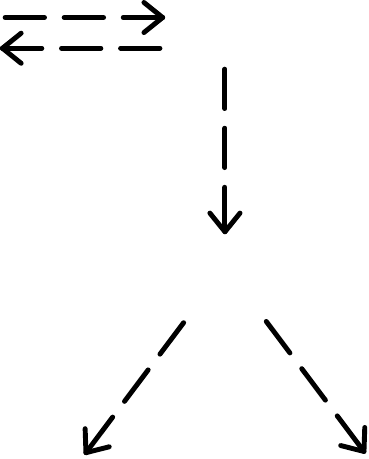}
\put(-23,90){$m_1$}
\put(42,90){$m_2$}
\put(40.5,35){$m_3$}
\put(12,-8){$m_4$}
\put(75,-8){$m_5$}

\end{overpic}
\end{subfigure}

\caption{on the left in red paths in $\Touch_p$ and in green paths in $\Max_p$. On the right the graph $(\Max_p,\dashrightarrow)$. Note that $(\Max_p^{III},\dashrightarrow)$ is a rooted tree, $f$ is a face of type II for $p$ while $f_{m_3}$ and $f_{m_5}$ are faces of type III for $p$.}
\label{fig:dasharrow}
\end{figure}

The main consequence of the following lemma is that $(\Max_p^{III},\dashrightarrow)$ is bipartite. This fact will be used in the algorithm $\TM$.

\begin{lemma}\label{lemma:forest}
For each $p\in\MAX$, the graph $(\Max_p^{III},\dashrightarrow)$ is a forest of rooted trees.
\end{lemma}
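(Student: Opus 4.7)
The plan is to prove that $(\Max_p^{III},\dashrightarrow)$ is a forest of rooted trees by establishing a structural claim connecting $\dashrightarrow$-edges to the ancestor relation in the genealogy tree, and then using this claim to derive acyclicity and bounded out-degree.

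First, I would unpack the meaning of $m\dashrightarrow m'$ in the type-III setting. Because $f_{m'}$ is of type III, the upper path of $f_{m'}$ (i.e.\ the parent $q$ of $m'$ in $T_g$) and the sibling $s'$ of $m'$ in $T_g$ both lie in $\Touch_p$, while $m'\in\Max_p$. The two extremal edges of the lower boundary of $f_{m'}$ are $\se^{m'}\in m'$, incident to the corner $q\cap m'$, and a second edge $e^{\star}\in s'$, incident to the corner $q\cap s'$. Thus $m\dashrightarrow m'$ means that $m$ contains $\se^{m'}$ or $e^{\star}$.

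Second, I would prove the structural claim: \emph{if $m\dashrightarrow m'$ with $m\ne m'$ and both in $\Max_p^{III}$, then $m\preceq s'$ in $T_g$}. In the sub-case where $m$ contains $\se^{m'}\in m'$, the paths $m$ and $m'$ share this corner edge; since $\se^{m'}$ sits at the vertex $q\cap m'$ where $q,m',s'$ all meet, the tree-of-paths Remark combined with the single-touch and non-crossing properties restricts how $m$ can exit this vertex, forcing $m$ to continue into $\Int_{s'}$ and hence $m\preceq s'$. In the remaining sub-case, $m$ contains $e^{\star}\in s'$, so $m$ and $s'$ share an edge; since $s'\in\Touch_p$ touches $p$ while $m\in\Max_p$ does not, $m$ cannot be an ancestor of $s'$, and sharing an edge then gives $m\preceq s'$.

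Third, I would use the structural claim together with a depth argument in $T_g$ to derive acyclicity. Suppose a directed cycle $m_1\dashrightarrow m_2\dashrightarrow\cdots\dashrightarrow m_k\dashrightarrow m_1$ existed. Each edge gives $m_i\preceq s_{m_{i+1}}$ in $T_g$, so the $T_g$-depth satisfies $d(m_i)\ge d(s_{m_{i+1}})=d(m_{i+1})$, using that siblings share depth. Cycling around forces all depths to coincide, whence $m_i\preceq s_{m_{i+1}}$ at equal depth collapses to $m_i=s_{m_{i+1}}$; this is impossible because $s_{m_{i+1}}\in\Touch_p$ while $m_i\in\Max_p$ and $\Touch_p\cap\Max_p=\emptyset$ by the definition of $\Max_p$. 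Out-degree at most one per vertex is analogous: the ancestor chain of $m$ in $T_g$ is linear, so the requirement that $m$ actually contains the specific corner edge pins down at most one candidate $m'$. The main obstacle will be the structural claim, in particular ruling out that two $\preceq$-incomparable elements of $\Max_p$ can jointly contain $\se^{m'}$; this requires a delicate planar argument at the corner vertex of $f_{m'}$, combining the tree-of-paths structure, single-touch, and non-crossing properties.
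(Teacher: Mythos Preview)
Your acyclicity argument via a depth comparison is in the right spirit and mirrors the paper's claim~(a) (there: $m\dashrightarrow m'$ implies $q_m\preceq q_{m'}$), although your stronger form $m\preceq s_{m'}$ is not fully justified in the case where $m$ contains $\se^{m'}\in m'$; the appeal to ``the tree-of-paths Remark'' is too vague to force $m$ into $\Int_{s'}$. The paper's weaker inequality is cleaner: since the lower boundary of $f_{m'}$ lies in $\Int_{q_{m'}}\setminus q_{m'}$, any path $m$ containing one of its edges has $q_m\preceq q_{m'}$, and that is already enough for the depth-cycle contradiction.

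The genuine gap is your degree bound. You argue that the \emph{out}-degree in $(\Max_p^{III},\dashrightarrow)$ is at most one, claiming that ``the ancestor chain of $m$ in $T_g$ is linear, so the requirement that $m$ actually contains the specific corner edge pins down at most one candidate $m'$.'' This does not follow: $m$ is a path with many edges, and nothing prevents it from containing the corner edges of several different faces $f_{m'}$ stacked along its ancestor chain in $T_g$. Indeed the paper's own Figure~\ref{fig:dasharrow} already shows a vertex of out-degree~$2$ in $(\Max_p,\dashrightarrow)$. Acyclicity of the directed graph alone is not enough to conclude that the underlying undirected graph is a forest; without the correct degree bound you could still have undirected cycles.

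The paper instead bounds \emph{in}-degree: for a fixed $m'$, the face $f_{m'}$ has exactly two extremal edges on its lower boundary, one of which is $\se^{m'}\in m'$. Hence at most one \emph{other} element of $\Max_p$ can interfere with $f_{m'}$, namely through the remaining edge $e$; and if two distinct $m,m''\in\Max_p$ both contained $e$, then the closed curves $m\circ\gamma_m$ and $m''\circ\gamma_{m''}$ would force every face incident to $e$ (in particular $f_{m'}$) to lie in $\Int_m\cup\Int_{m''}$, contradicting that $f_{m'}$ is a face of type~II or~III for $p$. You should replace your out-degree step with this in-degree argument.
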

\begin{proof}
For each $m\in\Max_p^{III}$ we denote by $q_m$ its parent in $T_g$. We split the proof into three parts.
\begin{enumerate}[label=\alph{CONT})]\stepcounter{CONT}\itemsep0em
\item\label{claim_1} let $m,m'\in\Max_p$. If $m\dashrightarrow m'$, then $q_m\preceq q_{m'}$ and $q_m$ interferes with $f_{m'}$, 
\end{enumerate}
\vspace*{-4.5mm}
\begin{enumerate}[label=\alph{CONT})]\stepcounter{CONT}\itemsep0em
\item\label{claim_2} every vertex in $(\Max_p,\dashrightarrow)$ has at most one incoming dart,
\end{enumerate}
\vspace*{-5mm}
\begin{enumerate}[label=\alph{CONT})]\stepcounter{CONT}\itemsep0em
\item\label{claim_3} in $(\Max_p^{III},\dashrightarrow)$ there are no cycles.
\end{enumerate}
\claimbegin{\ref{claim_1}} 
It holds that $f_{m'}\subseteq Int_{q_{m'}}$, thus the lower boundary of $f_{m'}$ is in $Int_{q_{m'}}\setminus q_{m'}$. This implies that $q_m$ is in $Int_{q_{m'}}\setminus q_{m'}$ because otherwise $m$ could not satisfy $m\dashrightarrow m'$. Therefore, $q_m\preceq q_{m'}$ as we claimed. Moreover, $q_m$ interferes with $f_{m'}$ for the same reasoning.
\claimend{\ref{claim_1}}

\claimbegin{\ref{claim_2}} 
Let us assume by contradiction that there exists $m\in\Max$ having two incoming darts in $(\Max_p,\dashrightarrow)$. Then, by definition of $\dashrightarrow$, there exist $m',m''\in\Max_p$ sharing the extremal edge $e$ of the lower boundary of $f_m$ not contained in $m$. Being $m'\circ \gamma_{m'}$ and $m''\circ \gamma_{m''}$ two closed curves, then every face $f$ containing $e$ is either in $Int_m$ or in $Int_{m'}$. Thus $f$ is not a face of type I, nor II nor III for $p$, absurdum.
\claimend{\ref{claim_2}}

\claimbegin{\ref{claim_3}}
Let us assume by contradiction that there exist $r$ elements of $\Max_p^{III}$, $m_1,m_2,\ldots,m_r$, $r\geq 2$ such that $m_i\dashrightarrow m_{i+1}$, for all $i\in[r-1]$ and $m_r\dashrightarrow m_1$. Then \ref{claim_1} implies $q_{m_1}\preceq q_{m_2}\preceq\ldots\preceq q_{m_r}\preceq q_{m_1}$, and thus $q_{m_1}=q_{m_2}=\ldots=q_{m_r}$. Thus $q_{m_1}$ has at least $r$ children. If $r\geq3$, then it is absurdum because $T_g$ is a binary tree. Else, $r=2$ and thus $f_{m_1}=f_{m_2}$ is a face of type II because $m_1$ and $m_2$ share the same parent in $T_g$, implying $m_1\not\in\Max_p^{III}$ and $m_2\not\in\Max_p^{III}$, absurdum. 
\claimend{\ref{claim_3}}
Now, we observe that the thesis is a consequences of \ref{claim_2} and~\ref{claim_3}.
\end{proof}

\begin{figure}[h]
\begin{algorithm}[H]
\SetAlgorithmName{$\TM(p)$}{}{}
\renewcommand{\thealgocf}{}
 \caption{}
  \KwIn{a path $p$ in $\MAX$}
  \KwOut{assign $triple(q)\in\mathcal{T}$ for all $q\in\Max_p$}
{Let $\{X,Y,W,Z\}=\mathcal{T}\setminus\{triple(p)\}$\label{line:15_definition:X_Y_W_Z}\;
\For{{\normalfont\textbf{each}} face $f$ of type II for $p$\label{line:15_for_II_type}}{
$triple(m^f_r)\leftarrow X$\;
$triple(m^f_\ell)\leftarrow Y$\;
}
Bipartite the forest $(\Max_p^{III},\dashrightarrow)$ into the two classes $A$ and $B$ so that $u\not\dashrightarrow v$ for all $u,v$ in the same class\label{line:15_bipartite}\;
\For{{\normalfont\textbf{each}} $m\in A$}{
$triple(m)\leftarrow W$\label{line:15_A}}
\For{{\normalfont\textbf{each}} $m\in B$}{
$triple(m)\leftarrow Z$\label{line:15_B}}
}
\end{algorithm}
\end{figure}

The following lemma is crucial to prove the correctness of algorithm \FTF and it explains the main consequences of algorithm \TM.

\begin{prop}\label{prop:TripleMax}
Let $p\in\MAX$ and let $f$ be a face of type II or type III for $p$. If we call $\TM(p)$ then
\begin{enumerate}[label=\theprop.(\arabic*), ref=\theprop.(\arabic*),leftmargin=\widthof{10.(1)}+\labelsep]\itemsep0em
\item\label{item:prop_triplemax_1} let $m\in\Max_p$ interfere with $f$, then $triple(m)\cap triple(p)=\emptyset$,
\item\label{item:prop_triplemax_2} let $m,m'\in\Max_p$ interfere with $f$, then $triple(m)\cap triple(m')=\emptyset$.
\end{enumerate}
\end{prop}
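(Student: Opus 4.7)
The plan is to treat the two assertions independently. Assertion~(1) is essentially immediate from the pseudocode of $\TM(p)$: every assignment puts $triple(m)$ into $\{X,Y,W,Z\}=\mathcal{T}\setminus\{triple(p)\}$, and since the five triples in $\mathcal{T}$ are pairwise disjoint by construction, $triple(m)\cap triple(p)=\emptyset$ for every $m\in\Max_p$, regardless of which face $m$ interferes with.

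For assertion~(2) I would split on the type of $f$. Assume first that $f$ is of type~II for $p$. Both $m^f_r,m^f_\ell\in\Max_p$ interfere with $f=f_{m^f_r}=f_{m^f_\ell}$. I would then rule out a third interfering path: any $m\in\Max_p\setminus\{m^f_r,m^f_\ell\}$ would satisfy $m\dashrightarrow m^f_r$ in addition to $m^f_\ell\dashrightarrow m^f_r$, giving $m^f_r$ two distinct incoming darts in $(\Max_p,\dashrightarrow)$ and contradicting Lemma~\ref{lemma:forest}\ref{claim_2}. Hence only $m^f_r$ and $m^f_\ell$ can interfere with $f$, and the first \texttt{for}-loop of $\TM(p)$ assigns them the two disjoint triples $X$ and $Y$.

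Now assume $f$ is of type~III for $p$; without loss of generality let $q^f_\ell\in\Max_p^{III}$ and $q^f_r\in\Touch_p$, so that $q^f_\ell$ interferes with $f=f_{q^f_\ell}$. For any two distinct $m,m'\in\Max_p$ both interfering with $f$, one must coincide with $q^f_\ell$: otherwise both would satisfy $\cdot\dashrightarrow q^f_\ell$ and Lemma~\ref{lemma:forest}\ref{claim_2} would be violated. So we may take $m=q^f_\ell$ and $m'\dashrightarrow q^f_\ell$. If $m'\in\Max_p^{II}$ then $triple(m')\in\{X,Y\}$ while $triple(q^f_\ell)\in\{W,Z\}$; if instead $m'\in\Max_p^{III}$ then $m'$ and $q^f_\ell$ are $\dashrightarrow$-adjacent in the forest $(\Max_p^{III},\dashrightarrow)$ and the bipartition $(A,B)$ places them on opposite sides, so they receive $W$ and $Z$ in some order. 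Either way the two triples differ and are therefore disjoint.

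The key ingredient is Lemma~\ref{lemma:forest}: the at-most-one-incoming-dart bound is what rules out the spurious third interfering path in both cases, and the forest/bipartiteness property is precisely what validates the bipartition step of $\TM(p)$. The one additional point worth noting is that $\Max_p=\Max_p^{II}\cup\Max_p^{III}$, because the parent of any $m\in\Max_p$ lies in $\Touch_p$ (so $f_m$ cannot be of type~I), which makes the split on $m'$ in the type-III case exhaustive; once that observation is in place the argument is essentially bookkeeping over Definition~\ref{def:face_types}.
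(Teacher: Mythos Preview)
Your argument is correct and follows essentially the same route as the paper: part~(1) from the fact that every triple assigned in $\TM(p)$ lies in $\mathcal{T}\setminus\{triple(p)\}$, and part~(2) by a case split on the face type, using the bipartition of $(\Max_p^{III},\dashrightarrow)$ for the type~III case. The only notable difference is that for type~II you invoke the at-most-one-incoming-dart claim of Lemma~\ref{lemma:forest} to exclude a third interfering path, whereas the paper simply asserts this uniqueness (relying on the informal remark preceding Definition~\ref{def:interfere}); your version is the more explicit of the two. Your closing observation that $\Max_p=\Max_p^{II}\cup\Max_p^{III}$ is also a point the paper uses silently.
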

\begin{proof}
The first statements is implied by Line~\ref{line:15_definition:X_Y_W_Z} and all lines in which a value of $triple$ is assigned. To prove the second statement let $f$ and $g$ be a face of type II and type III for $p$, respectively. By definition, the unique paths in $\Max_p$ that interfere with $f$ are $m^f_r$ and $m^f_\ell$. The for cycle in Line~\ref{line:15_for_II_type} implies $triple(m^f_r)=X$ and $ triple(m^f_\ell)=Y$, and the thesis applies in this case. Now let $m,m'$ interfere with $g$. Hence either $m\dashrightarrow m'$ or $m'\dashrightarrow m$ and w.l.o.g.~we assume that $m'\dashrightarrow m$. Thus $g=f_m$ and there are two cases: either $m\in\Max^{II}_p$ or $m\in\Max^{III}_p$. If the former case applies, then $triple(m)\in\{X,Y\}$ because of the for cycle in Line~\ref{line:15_for_II_type} and $triple(m)\in\{W,Z\}$ because of  Line~\ref{line:15_A} and Line~\ref{line:15_B}, so the thesis holds in this case. If the latter case applies, then Line~\ref{line:15_bipartite}, Line~\ref{line:15_A} and Line~\ref{line:15_B} imply $triple(m)=W$ and $triple(m')=Z$, or vice-versa, and thus the thesis holds. We stress that we can bipartite the graph $(\Max_p^{III},\dashrightarrow)$ because Lemma~\ref{lemma:forest} states that it is a forest. 
\end{proof}

\subsection{Correctness of algorithm \FTF}\label{sec:15_FTF}

In this subsection we prove the correctness of algorithm \FTF shown in Subsection~\ref{sec:15_outline}. As a consequence we have Corollary~\ref{cor:15_forest_teorico} which state that every \NCS has \FCN at most 15.

\begin{theorem}\label{th:15_forest}
Given a \NCS $P$, algorithm $\FTF$ produces a forest labeling $\mathcal{L}$ of $P$ which uses at most 15 labels.
\end{theorem}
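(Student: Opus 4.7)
My plan is to reduce via Theorem~\ref{th:faces} to showing that every face of $U$ is solved by $\mathcal{L}$, and then dispatch the three face types using Propositions~\ref{prop:Color} and~\ref{prop:TripleMax}. The binarization performed at the top of $\FTF$ legitimates the use of Theorem~\ref{th:faces}. The label bound is immediate: every label is drawn from some $triple(p)\in\mathcal{T}$ and $|\mathcal{T}|=5$. Totality of $\mathcal{L}$ and well-definedness of $triple(p)$ at the moment it is consulted follow by a routine induction on the level $i$, using that $triple(p_1)=T_1$ is set at initialization, that every $p\in\MAX_i$ with $i\ge 1$ has $triple(p)$ set by a preceding call $\TM(p')$ at level $i-1$, and that $P=\bigcup_{p\in\MAX}\Touch_p$ so that every path is labeled by some call $\CT(p)$.

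Fix now a face $f\in\mathcal{F}$ with upper path $q^f$. There is a unique $p\in\MAX$ with $q^f\in\Touch_p$, and each child of $q^f$ in $T_g$ lies in $\Touch_p\cup\Max_p$: any such child that is not in $\Touch_p$ is automatically maximal in $P\setminus\Touch_p$ and hence in $\Max_p$. Therefore $f$ has a well-defined type~I, II, or~III for $p$. Type~I is immediately closed by Proposition~\ref{prop:Color}, which yields $\mathcal{L}(e^f_r)\cap\mathcal{L}(e^f_\ell)=\emptyset$.

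For types II and III I would first establish a small variant of Lemma~\ref{lemma:right_left}: every path containing $e^f_r$ is a left descendant of $q^f_r$, lying in $\text{Tree}\Touch_{q^f_r}$ when $q^f_r\in\Max_p$ and in $\text{Tree}\Touch_p$ when $q^f_r\in\Touch_p$; symmetrically for $e^f_\ell$ and $q^f_\ell$. The proof is a verbatim adaptation of Lemma~\ref{lemma:right_left}: non-crossing forbids strict ancestors of $q^f$ from entering $Int_{q^f}$, so any path through $e^f_r$ descends from $q^f_r$, and Remark~\ref{remark:crucial} pushes it onto the leftmost branch of $q^f_r$'s subtree. Consequently $\mathcal{L}(e^f_r)$ is contained in $triple(q^f_r)$ or $triple(p)$ according to which of the two lists $q^f_r$ belongs to, and analogously for $\mathcal{L}(e^f_\ell)$. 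Disjointness of the two sides then follows from Proposition~\ref{prop:TripleMax}: part~(2) gives the type~II case (where $q^f_r,q^f_\ell\in\Max_p$ both interfere with $f$), and part~(1) gives the type~III case (where the unique $m\in\{q^f_r,q^f_\ell\}\cap\Max_p$ interferes with $f$ while the other side lies in $\Touch_p$ and is thus labeled from $triple(p)$). Hence $f$ is solved.

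The main obstacle is the variant of Lemma~\ref{lemma:right_left} just described: conceptually it is the same argument as the original, but it must be written out uniformly across the configurations of $(q^f_r,q^f_\ell)\in(\Touch_p\cup\Max_p)^2$ that arise in types~II and~III. Once that is in hand, the theorem reduces to the bookkeeping of the three cases sketched above, and Theorem~\ref{th:faces} promotes the face-by-face verification into the forest property of $\mathcal{L}$.
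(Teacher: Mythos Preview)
Your overall strategy coincides with the paper's: reduce via Theorem~\ref{th:faces} to faces, split by type, close type~I with Proposition~\ref{prop:Color}, and close types~II and~III with Proposition~\ref{prop:TripleMax}. The type~I and type~II cases are fine.

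There is, however, a genuine gap in your treatment of type~III. Your proposed variant of Lemma~\ref{lemma:right_left} is false on the $\Touch_p$ side: it is \emph{not} true that when $q^f_\ell\in\Touch_p$ every path containing $e^f_\ell$ lies in $\text{Tree}\Touch_p$. A path $m'\in\Max_p$ can also contain $e^f_\ell$; this is exactly the relation $m'\dashrightarrow m$ of Definition~\ref{def:dasharrow} (see Figure~\ref{fig:dasharrow}, where $m_3\in\Max_p$ interferes with $f_{m_5}$). Such an $m'$ is labeled from $triple(m')$, which is disjoint from $triple(p)$, so your conclusion $\mathcal{L}(e^f_\ell)\subseteq triple(p)$ fails, and part~\ref{item:prop_triplemax_1} alone does not finish the case.

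The fix is precisely what the paper does: in type~III, after using part~\ref{item:prop_triplemax_1} to separate $triple(m^f)$ from the $\Touch_p$ contributions, one must also invoke part~\ref{item:prop_triplemax_2} to obtain $triple(m^f)\cap triple(m')=\emptyset$ for the (at most one) $m'\in\Max_p$ that interferes with $f$ on the $\Touch_p$ side. With that addition your argument goes through and matches the paper's.
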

\begin{proof}
Thanks to Theorem \ref{th:faces} we only need to prove that every faces in $\bigcup_{q\in P}q$ is solved by $\mathcal{L}$. Let $f$ be a face, then there exists $p\in\MAX$ such that $f$ is a face of type I, or type II or type III for $p$.

If $f$ is of type I for $p$, then $f$ is solved by $\mathcal{L}$ because of Proposition~\ref{prop:Color} and the call $\CT(p)$.

If $f$ is of type II for $p$, then $e^f_r\in m^f_r$ and $e^f_\ell\in m^f_\ell$. Moreover, if a path $q\in P$ interferes with $f$, then either $q\in\Touch_{m^f_r}$ or $q\in\Touch_{m^f_\ell}$. Thus $f$ is solved by $\mathcal{L}$ because of~\ref{item:prop_triplemax_2} and the calls $\CT(m^f_r)$ and $\CT(m^f_\ell)$.

If $f$ is a face of III type for $p$, then either $e^f_r\in m^f$ or $e^f_\ell\in m^f$, where $m^f$ is the unique child of $q$ belonging to $\Max_p$. W.l.o.g., we assume that $e^f_r\in m^f$. We observe that $e^f_\ell$ may belong to some paths in $\Touch_p$ and at most one path in $\Max_p$. Every path $q$ in $\Touch_p$ satisfies $\mathcal{L}(q)\in triple(p)$, thus thanks to~\ref{item:prop_triplemax_1} and the call $\CT(m^f)$ we can ignore paths in $\Touch_p$ to determine if $f$ is solved by $\mathcal{L}$. Hence let us assume that there exists $m'\in\Max_p$ satisfying $e^f_\ell\in m'$. Then $f$ is solved by $\mathcal{L}$ because of both the statements of Proposition~\ref{prop:TripleMax} and the calls $\CT(m^f)$ and $\CT(m')$.
\end{proof}

\begin{corollary}\label{cor:15_forest_teorico}
Let $P$ be a set of non-crossing shortest paths in a plane graph $G$ whose extremal vertices lie on the external face of $G$. Then the \FCN of $P$ is at most 15.
\end{corollary}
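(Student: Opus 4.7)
The plan is to derive the corollary essentially as an immediate consequence of Theorem~\ref{th:15_forest}, once we verify that the hypotheses of that theorem are met by the input $P$. The theorem is stated for a \NCS $P$, i.e., a set of paths satisfying the three bullets of Definition~\ref{definition:NCS}: extremal vertices on $f^\infty$, each path being a shortest path with the collection being single-touch, and pairwise non-crossing in the fixed embedding. The hypotheses of the corollary give us directly the first and third bullets, so the only thing to check is the single-touch property.

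First I would recall the standard perturbation argument discussed in the preliminaries: by adding suitable infinitesimal perturbations to the edge weights of $G$, we can make the shortest path between any pair of vertices unique without changing the identity of any shortest path that was already unique; this is a standard lexicographic tie-breaking. Under this modification, for any two vertices $u,v$ of $G$ the unique $uv$ shortest path is still contained in some shortest path of the original weighting, and in particular the paths in $P$ remain unchanged if we assume (as is customary) that they were already unique shortest paths; if not, one simply replaces $P$ by the corresponding set of perturbed shortest paths sharing the same terminal pairs. Uniqueness of shortest paths immediately implies the single-touch property, since the intersection of two unique shortest paths between pairs of vertices is itself the unique shortest path between its extremal vertices, hence a (possibly empty) path.

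Once single-touch is secured, $P$ satisfies Definition~\ref{definition:NCS} and is therefore a \NCS. Applying Theorem~\ref{th:15_forest} to $P$, algorithm \FTF produces a forest labeling $\mathcal{L}: P \mapsto [15]$, meaning that for each label $i \in [15]$ the union $\bigcup_{\mathcal{L}(p)=i} p$ is a forest. Taking these 15 (possibly empty) forests as the covering family $F$, every path in $P$ is contained in one of them by construction, which yields $\PCFN(P) \le 15$.

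There is essentially no obstacle here: the corollary is a packaging step. The only point that requires a brief comment is the passage from "non-crossing shortest paths" in the statement (which does not explicitly mention single-touch) to the stronger \NCS setting that Theorem~\ref{th:15_forest} needs. The perturbation argument handles this cleanly, and the rest is a direct invocation of the theorem.
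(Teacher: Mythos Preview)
Your proposal is correct and matches the paper's approach: the paper states the corollary without proof, treating it as immediate from Theorem~\ref{th:15_forest}, and the only point to fill in is exactly the one you address, namely that the single-touch property can be assumed via the perturbation argument already discussed in the preliminaries. There is nothing more to add.
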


\section{The \FCN is at most 4}\label{sec:4_ALL}

In this section we show that the \FCN of a \NCS $P$ is at most 4. In particular, we present algorithm \FF that produces a forest labeling of $P$ which uses at most 4 labels. We strictly use all results in Section~\ref{sec:15_ALL}.

In Subsection~\ref{sec:4_outline} we describe the outline of algorithm $\FF$. In Subsection~\ref{sec:4_type_I} we deal with faces of the first type and Subsection~\ref{sec:4_type_II_III} with faces of second and third type. Finally, in Subsection~\ref{sec:4_FF} we exhibit algorithm $\FF$ and we prove its correctness.

\subsection{Outline of the algorithm}\label{sec:4_outline}

We stress that algorithm \FF has the same structure of algorithm \FTF, the only differences are that \CT and \TM are replaced with \CST and \TSM, respectively. We introduce $\scc(p)\in triple(p)$ for all $p\in\MAX$ as a special label of $triple(p)$. Algorithm $\CST(p)$ assigns one label of $triple(p)$ for all $q\in\Touch_p$ so that all paths containing $\se^p$ (see Definition~\ref{def:special_edge}) are labeled with $\scc(p)$. Algorithm $\TSM(p)$ assigns $triple(q)$ and $\scc(q)$ for all $q\in\Max_p$. The assignments are set so that at iteration $i$ all faces in $\Touch_p\cup\Max_p$ are solved by $\mathcal{L}$, for all $p\in\MAX_i$.

\begin{figure}[h]
\begin{algorithm}[H]
\SetAlgorithmName{$\FF$}{}{}
\renewcommand{\thealgocf}{}
 \caption{}
  \KwIn{a \NCS $P$}
 \KwOut{a forest labeling $\mathcal{L}:P\mapsto [4]$}
{Transform $P$ from a \NCS to a \BNCS\;
For each path $p\in P$ define the global variable $\mathcal{L}(p)$ initialized to \NULL\;
For each path $p\in\MAX$ define the global variables $triple(p),\scc(p)$ both initialized to \NULL\;
$triple(p_1)\leftarrow \{1,2,3\}$\;
$\scc(p_1)\leftarrow 1$\;
\For{$i=0,\ldots,N$\label{line:4_cycle1}}{
\For{{\normalfont\textbf{each}} $p\in\MAX_i$}{
$\CST(p)$\tcp*{assign $\mathcal{L}(q)\in triple(p)$ for all $q\in\Touch_p$}
$\TSM(p)$\tcp*{assign $triple(q),\scc(q)$ for all $q\in\Max_p$}
}
}
\Return $\mathcal{L}$\;
}
\end{algorithm}
\end{figure}

\subsection{Dealing with faces of type I}\label{sec:4_type_I}

The main goal of this subsection is to build algorithm $\CST(p)$ which assigns $\mathcal{L}(q)\in triple(p)$ for all $q\in\Touch_p$ so that all paths containing $\se^p$ are labeled with $\scc(p)$ and all faces of type I for $p$ are solved by $\mathcal{L}$ (see Proposition~\ref{prop:ColorSpecial}), where $p\in\MAX$. We obtain algorithm \CST by joining algorithms \RC and \LC.

The algorithm \LC is equal to \RC except for the last line in which $q_r$ and $q_\ell$ are swapped. Therefore, Lemma~\ref{lemma:LC} is a consequence of Lemma~\ref{lemma:RC}.

\begin{figure}[h]
\begin{algorithm}[H]
\SetAlgorithmName{$\LC(p,q,( c_1,c_2,c_3) )$}{}{}
\renewcommand{\thealgocf}{}
\caption{}
 \KwIn{a path $p$ in $\MAX$, a path $q$ in $Touch_p$ and a permutation $( c_1,c_2,c_3)$ of $triple(p)$}
  \KwOut{assign $\mathcal{L}(q')\in triple(p)$ for all $q'\in\Touch_p$ such that $q'\preceq q$}
{$\mathcal{L}(q)\leftarrow c_1$\;
\If{$q$ has one child $q'$ in $\Touch_p$}{$\LC(p,q',( c_1,c_2,c_3))$\;}
\If{$q$ has two children in $\Touch_p$}{
$\LC(p,q_\ell,( c_1,c_3,c_2))$\; $\LC(p,q_r,( c_2,c_1,c_3))$\;}
}
\end{algorithm}
\end{figure}

\begin{figure}[h]
\captionsetup[subfigure]{justification=centering}
\centering
\begin{subfigure}{7cm}
\begin{overpic}[width=7cm,percent]{4_images/bynary_tree_2-eps-converted-to.pdf}
\put(51.3,55.5){$( {\color{red} 1},2,3)$}

\put(21.6,37){$( {\color{red} 1},3,2)$}
\put(81.1,37){$( {\color{red} 2},1,3)$}

\put(1.8,19){$( {\color{red} 1},2,3)$}
\put(41.4,19){$( {\color{red} 3},1,2)$}
\put(81.1,19){$( {\color{red} 2},1,3)$}

\put(-8.6,-5){$( {\color{red} 1},3,2)$}
\put(11.8,-5){$( {\color{red} 2},1,3)$}
\put(31,-5){$( {\color{red} 3},2,1)$}
\put(51.5,-5){$( {\color{red} 1},3,2)$}
\put(70.9,-5){$( {\color{red} 2},3,1)$}
\put(91.4,-5){$( {\color{red} 1},2,3)$}

\put(-6,55.5){\LC}
\end{overpic}
\end{subfigure}
\vspace{2mm}
\caption{assume that $triple(p)=\{1,2,3\}$. Example of permutations of $triple(p)$ during the call $\LC(p,q,( 1,2,3))$, where $q$ is the vertex on the top. The first element of the permutation, in red, denotes also the label assigned to the path.}\label{fig:LeftColor}
\end{figure}

\begin{lemma}\label{lemma:LC}
Let $p\in\MAX	$, let $( c_1,c_2,c_3)$ be any ordering of $triple(p)$ and let $q\in\Touch_p$. If we call $\LC(p,q,( c_1,c_2,c_3))$, then $\mathcal{L}(R(q))\subseteq\{c_1,c_2\}$ and $\mathcal{L}(L(q))=\{c_1\}$.
\end{lemma}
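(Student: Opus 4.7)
The plan is to mirror the argument of Lemma~\ref{lemma:RC}, since $\LC$ differs from $\RC$ only in swapping the roles of the left and right children in the two recursive calls. Concretely, on a call $\LC(p,q,(c_1,c_2,c_3))$ with $q$ having two children in $\Touch_p$, the left child is visited via $\LC(p,q_\ell,(c_1,c_3,c_2))$ and the right child via $\LC(p,q_r,(c_2,c_1,c_3))$; in the one-child case, the permutation is forwarded unchanged.

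To show $\mathcal{L}(L(q))=\{c_1\}$, I would trace the recursion down the leftmost descending path from $q$ in $\text{Tree}\Touch_p$ (which is exactly $L(q)$ by Definition~\ref{def:R(q)_L(q)}). At every step along this path, the next call is either the one-child case, where the permutation is unchanged, or the two-child case, where we descend to $q_\ell$ with the permutation $(c_1,c_3,c_2)$. In both cases the first coordinate is preserved as $c_1$. Since $\mathcal{L}$ is assigned to each node as the first coordinate of its permutation, every $q'\in L(q)$ receives label $c_1$.

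To show $\mathcal{L}(R(q))\subseteq\{c_1,c_2\}$, I would symmetrically trace the rightmost descending path from $q$, which equals $R(q)$. Along this path, one-child calls preserve the permutation and two-child calls descend to $q_r$ with the permutation $(c_2,c_1,c_3)$, i.e.\ the first two coordinates are swapped and the third is preserved. Hence the unordered pair of first two coordinates remains $\{c_1,c_2\}$ throughout the recursion, so the first coordinate (which is the assigned label) always lies in $\{c_1,c_2\}$, proving $\mathcal{L}(R(q))\subseteq\{c_1,c_2\}$.

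There is essentially no obstacle beyond carefully invoking the definitions of $R(q)$ and $L(q)$ to make sure the recursion covers precisely these sets; the rest is the invariant-preservation bookkeeping already used in Lemma~\ref{lemma:RC}. A one-line alternative would be to observe that $\LC$ is obtained from $\RC$ by the reflection that exchanges right and left descendants, and thereby deduce the statement formally from Lemma~\ref{lemma:RC}.
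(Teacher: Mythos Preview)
Your proposal is correct and matches the paper's approach: the paper simply observes that $\LC$ is $\RC$ with the roles of $q_r$ and $q_\ell$ swapped, so Lemma~\ref{lemma:LC} follows from Lemma~\ref{lemma:RC} by symmetry. Your expanded invariant-tracking argument is exactly the unwinding of that symmetry, and your ``one-line alternative'' is precisely what the paper does.
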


We can use algorithm \RC and algorithm \LC to label all elements in $\Touch_p$ in order to solve all faces of type I for $p$ and to assure that all paths containing $\se^p$ are labeled with $\scc(p)$. This is made with algorithm \SC whose entries are $p,q,( c_r,c_\ell)$, where $p$ is a path in $\MAX$, $q$ is a path in $\Touch_p$, and $c_r$, $c_\ell$ are two labels in $triple(p)$. 

We call $\SC(p,q,( c_r,c_\ell))$ if and only if $q$ contains $\se^p$, and the child of $q$ containing $\se^p$ is labeled with $\scc(p)$; moreover, if the right (resp. left) child of $q$ does not contain $\se^p$, then we label it with $c_r$ (resp., $c_\ell$). If both children of $q$ do not contain $\se^p$, then we label all left descendants of $q$ with $\scc(p)$---in a way, we assume that if the right child of $q$ does not contain $\se^p$, then the left child does. Clearly, if $q$ has one child in $Touch_p$, then we assume that it contains $\se^p$.

It holds that $c_r,c_\ell\in triple(p)\setminus\{\scc(p)\}$ but they are not necessarily distinct. Let $Q$ be the set of all elements in $\Touch_p$ that contain $\se^p$. We observe that $Q$ forms a path in $\text{Tree}\Touch_p$. In few words, algorithm \SC calls \RC for paths on the left of $Q$ and \LC for paths on the right of $Q$ (see Figure \ref{fig:setcolor}). The recursive calls of \SC turn the labels in order to obtain the following lemma.

\begin{figure}[h]
\captionsetup[subfigure]{justification=centering}
\centering
	\begin{subfigure}{13cm}
\begin{overpic}[width=13cm,percent]{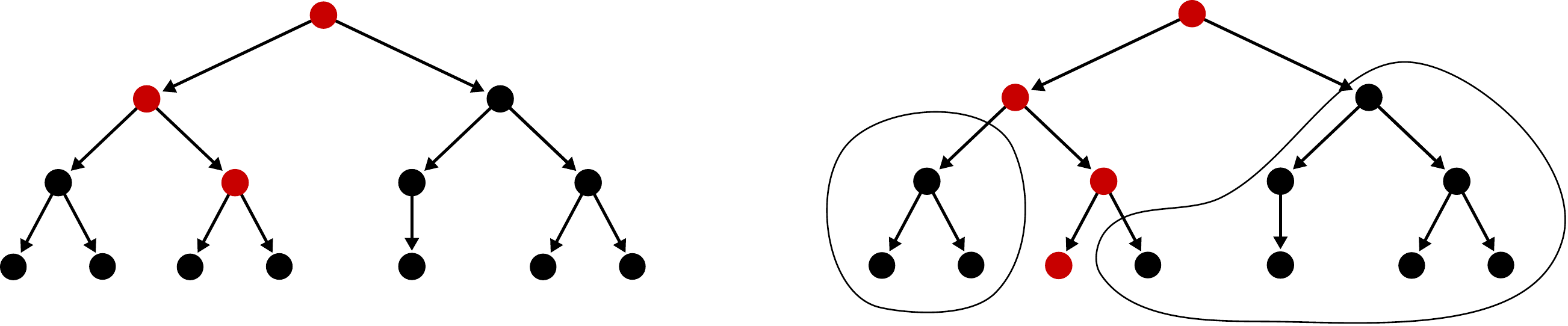}
\put(-2,18.5){$TreeTouch_p$}
\put(48,14.5){\RC}
\put(85.5,17.6){\LC}
\end{overpic}
\end{subfigure}
  \caption{on the left $TreeTouch_p$ in which red vertices correspond to paths containing $\se^p$. On the right, red vertices correspond to paths labeled with $\scc(p)$. We show for which vertices \SC calls \RC and for which \LC.}
\label{fig:setcolor}
\end{figure}

\begin{figure}[h]
\begin{algorithm}[H]
\SetAlgorithmName{$\SC(p,q,( c_r,c_\ell) )$}{}{}
\renewcommand{\thealgocf}{}
 \caption{}
  \KwIn{a path $p\in\MAX$, a path $q\in Touch_p$, two labels $c_r,c_\ell\in triple(p)\setminus\{\scc(p)\}$}
 \KwOut{assign $\mathcal{L}(q')\in triple(p)$ for all $q'\in\Touch_p$ such that $q'\preceq q$ so that $\mathcal{L}(q'')=\scc(p)$ for all $q''\in\Touch_p$ containing $\se^p$}
{$\mathcal{L}(q)\leftarrow \scc(p)$\label{line:color_Mix}\;
\lIf{$q$ has one child $q'$ in $\Touch_p$}{$\SC(p,q',( c_r,c_\ell) )$}\label{line:one_child_Mix}
\If{$q$ has two children in $\Touch_p$}{
\uIf{$\se^p\in q_r$\label{line:call_right_Mix}}{
$\SC(p,q_r,( c_r,triple(p)\setminus\{c_\ell,\scc(p)\}) )$\label{line:call_right_Mix_1}\; 
$\RC(p,q_\ell,( c_\ell,\scc(p),triple(p)\setminus\{c_\ell,\scc(p)\}) )$\label{line:call_right_Mix_2}\;
}
\Else{\label{line:call_left_Mix}
$\SC(p,q_\ell,( triple(p)\setminus\{c_r,\scc(p)\},c_\ell) )$\;
$\LC(p,q_r,( c_r,\scc(p),triple(p)\setminus\{c_r,\scc(p)\}) )$\;
}
}
}
\end{algorithm}
\end{figure}

\begin{lemma}\label{lemma:SC}
Let $p\in\MAX$, let $c_r,c_\ell\in triple(p)\setminus\{\scc(p)\}$ be not necessarily distinct and let $q\in\Touch_p$ satisfy $\se^p\in q$. If we call $\SC(p,q,( c_r,c_\ell))$, then $\mathcal{L}(R(q))\subseteq\{c_r,\scc(p)\}$ and $\mathcal{L}(L(q))\subseteq\{c_\ell,\scc(p)\}$.
\end{lemma}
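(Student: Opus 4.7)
The plan is to proceed by induction on the depth of the subtree of $\text{Tree}\Touch_p$ rooted at $q$. In the base case $q$ is a leaf of this subtree, so $R(q)=L(q)=\{q\}$; Line~\ref{line:color_Mix} sets $\mathcal{L}(q)=\scc(p)$, which belongs to both $\{c_r,\scc(p)\}$ and $\{c_\ell,\scc(p)\}$, so the inclusions are immediate.

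For the inductive step I would match the three branches of \SC. If $q$ has a single child $q'$ in $\Touch_p$, then $R(q)=\{q\}\cup R(q')$ and $L(q)=\{q\}\cup L(q')$, and the recursive call $\SC(p,q',(c_r,c_\ell))$ preserves the permutation; invoking the convention stated alongside the algorithm that such a $q'$ inherits $\se^p$, the inductive hypothesis applies to $q'$ and, combined with $\mathcal{L}(q)=\scc(p)$, yields the claim. If $q$ has two children in $\Touch_p$ and $\se^p\in q_r$, then $R(q)=\{q\}\cup R(q_r)$ and $L(q)=\{q\}\cup L(q_\ell)$; writing $c=triple(p)\setminus\{c_\ell,\scc(p)\}$, the inductive call $\SC(p,q_r,(c_r,c))$ gives $\mathcal{L}(R(q_r))\subseteq\{c_r,\scc(p)\}$, while the call $\RC(p,q_\ell,(c_\ell,\scc(p),c))$ combined with Lemma~\ref{lemma:RC} gives $\mathcal{L}(L(q_\ell))\subseteq\{c_\ell,\scc(p)\}$; together with $\mathcal{L}(q)=\scc(p)$ both inclusions follow. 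The case $\se^p\in q_\ell$ is symmetric, using Lemma~\ref{lemma:LC} in place of Lemma~\ref{lemma:RC} and swapping the roles of $R$ and $L$.

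The only delicate step is the bookkeeping of the permutations: one must check that along the $\se^p$-containing chain the first entry of the permutation is kept equal to $c_r$ when we descend into the right child and to $c_\ell$ when we descend into the left child, and that whenever we fork off this chain into an \RC or \LC call the second entry of the permutation is exactly $\scc(p)$, so that Lemmas~\ref{lemma:RC} and~\ref{lemma:LC} land in the intended pair $\{c_\ell,\scc(p)\}$ or $\{c_r,\scc(p)\}$. This is a direct verification on the triples written on Lines~\ref{line:call_right_Mix_1} and~\ref{line:call_right_Mix_2} and their symmetric counterparts, after which the induction closes.
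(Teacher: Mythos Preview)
Your proof is correct and follows essentially the same approach as the paper: both arguments track how $c_r$ and $c_\ell$ are preserved along the $\se^p$-chain of \SC calls and then invoke Lemma~\ref{lemma:RC} or Lemma~\ref{lemma:LC} at the point where the recursion forks off into \RC or \LC. The only difference is packaging: the paper unrolls the chain explicitly (introducing the first $w\in R(q)$ with $\se^p\notin w_r$ and noting that $c_r$ is unchanged up to that point), whereas you wrap the same bookkeeping into a clean structural induction on the depth of the subtree; your final paragraph is in fact a restatement of the paper's explicit trace and is already subsumed by your induction.
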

\begin{proof}
If $q$ has one child in $TreeTouch_p$, then algorithm \SC does the same call to child, thus we can assume that $q$ has two children in $TreeTouch_p$. W.l.o.g., we assume that $\se^p\in q_r$, indeed, if $\se^p\not\in q_r$, then the proof is symmetric. Thus algorithm \SC does the call $\SC(p,q_r,( c_r,triple(p)\setminus\{c_\ell,\scc(p)\}))$ and the call $\RC(p,q_\ell,$ $( c_\ell,\scc(p),triple(p)\setminus\{c_\ell,\scc(p)\})$. The second call implies that $\mathcal{L}(L(q))\subseteq\{c_\ell,\scc(p)\}$ by Lemma \ref{lemma:RC}. It remains to show that $\mathcal{L}(R(q))\subseteq\{c_r,\scc(p)\}$.

Let $w$ be the first element w.r.t.\ $\preceq$ in $R(q)$ such that $w_r$ does not contain $\se^p$, i.e., $\se^p\not\in w_r$ and $\se^p\in z$ for all $z\in R(q)\setminus R(w_r)$. Thus all elements in $R(q)$ before (w.r.t.\ $\preceq$) $w_r$ are labeled with $\scc(p)$ because of Line \ref{line:color_Mix}. Moreover, by Line \ref{line:call_left_Mix} we call $\LC(p,w_r,( c_r,\scc(p),$ $triple(p)\setminus\{c_r,\scc(p)\}))$; indeed we note that the third element (i.e., $c_r$) in all these recursive calls of \SC does not change because of Line~\ref{line:call_right_Mix_1}. Finally, Lemma \ref{lemma:LC} implies that $\mathcal{L}(R(w_r))\subseteq \{c_r,\scc(p)\}$, thus $\mathcal{L}(R(q))\subseteq\{c_r,\scc(p)\}$ as we claimed.
\end{proof}

\begin{prop}\label{prop:ColorSpecial}
Let $p\in\MAX$, if we call $\CST(p)$, then every face of type I for $p$ is solved by $\mathcal{L}$, $\mathcal{L}(q)\in triple(p)$ for all paths in $\Touch_p$ and $\mathcal{L}(q')=\scc(p)$ for all paths in $\Touch_p$ containing $\se^p$.
\end{prop}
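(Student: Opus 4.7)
The plan is to read $\CST(p)$ as the natural wrapper that picks arbitrary labels $c_r, c_\ell \in triple(p) \setminus \{\scc(p)\}$ (possibly equal) and invokes $\SC(p, p, (c_r, c_\ell))$; this is legitimate because $p \in \Touch_p$ and $\se^p \in p$ by Definition~\ref{def:special_edge}. I would then verify the three claims in the order (2), (3), (1).

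Claim (2) is immediate: by induction on the recursion depth, the routines $\SC$, $\RC$, $\LC$ only ever assign labels drawn from $triple(p)$ (see Line~\ref{line:color_Mix} and the first lines of $\RC$ and $\LC$), and the ordered triples they pass to their subcalls are always permutations of $triple(p)$.

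For claim (3), let $Q \subseteq \Touch_p$ be the set of paths containing $\se^p$. The single-touch property forces $Q$ to form a root-to-leaf chain in $\text{Tree}\Touch_p$, because any two paths in $Q$ share $\se^p$ and hence one is an ancestor of the other in $T_g$. Algorithm $\SC$ is engineered to descend exactly along this chain: each call $\SC(p, q, \cdot)$ starts with $\mathcal{L}(q) \leftarrow \scc(p)$ on Line~\ref{line:color_Mix}, and then forwards the unique $\se^p$-containing child of $q$ to a further $\SC$ call through Line~\ref{line:call_right_Mix_1} or Line~\ref{line:call_left_Mix}, while the other child is handed off to $\RC$ or $\LC$. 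A straightforward induction along the chain shows that every $q \in Q$ is visited by $\SC$ and therefore labeled $\scc(p)$.

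Claim (1) is the main obstacle, since it requires the permutations engineered into $\SC$ to interact correctly with Lemmas~\ref{lemma:RC}, \ref{lemma:LC} and \ref{lemma:SC}. For a face $f$ of type I for $p$ with upper path $q$, Lemma~\ref{lemma:right_left} reduces solving $f$ to showing $\mathcal{L}(L(q_r)) \cap \mathcal{L}(R(q_\ell)) = \emptyset$. I would split into three cases according to which routine visits $q$. If the visit is via $\RC(p, q, \sigma)$ or $\LC(p, q, \sigma)$, then $q$ lies off the chain $Q$ and the argument of Proposition~\ref{prop:Color} applies verbatim via Lemma~\ref{lemma:RC} or its mirror Lemma~\ref{lemma:LC}. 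If the visit is via $\SC(p, q, (c_r, c_\ell))$ with $\se^p \in q_r$, then Line~\ref{line:call_right_Mix_1} launches $\SC(p, q_r, (c_r, triple(p) \setminus \{c_\ell, \scc(p)\}))$, which by Lemma~\ref{lemma:SC} yields $\mathcal{L}(L(q_r)) \subseteq triple(p) \setminus \{c_\ell\}$, while Line~\ref{line:call_right_Mix_2} launches $\RC(p, q_\ell, (c_\ell, \scc(p), triple(p) \setminus \{c_\ell, \scc(p)\}))$, which by Lemma~\ref{lemma:RC} yields $\mathcal{L}(R(q_\ell)) = \{c_\ell\}$; these two label sets are disjoint. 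The symmetric case $\se^p \in q_\ell$ is handled by Lemmas~\ref{lemma:SC} and \ref{lemma:LC}. The delicate bit is the mechanical check that in both the cases $c_r = c_\ell$ and $c_r \neq c_\ell$ the permutations passed onward really do put the correct label into the slot demanded by the invoked lemma; I expect a single invariant of the form ``at any call $\SC(p, q, (c_r, c_\ell))$ we have $c_r, c_\ell \in triple(p) \setminus \{\scc(p)\}$'' to carry the induction through smoothly.
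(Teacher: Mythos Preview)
Your proposal is correct and follows essentially the same route as the paper: the paper also dispatches claims (2) and (3) quickly by inspection of the recursion, and for claim (1) isolates the $\SC$ case as a separate sub-claim (with the same computation $\mathcal{L}(L(q_r))\subseteq triple(p)\setminus\{c_\ell\}$ via Lemma~\ref{lemma:SC} and $\mathcal{L}(R(q_\ell))=\{c_\ell\}$ via Lemma~\ref{lemma:RC}) before noting that the $\RC$/$\LC$ cases reduce to the argument of Proposition~\ref{prop:Color}. Your invariant $c_r,c_\ell\in triple(p)\setminus\{\scc(p)\}$ is exactly the hypothesis the paper maintains, and your case split matches the paper's.
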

\begin{proof}
By the recursion of algorithm $\CST$, it is clear that all paths in $\Touch_p$ are labeled with labels in $triple(p)$ and all paths in $\Touch^p$ containing $\se^p$ are labeled with $\scc(p)$. It remains to prove that every face of type I for $p$ is solved by $\mathcal{L}$. We need the following preliminary claim.
\begin{enumerate}[label=\alph{CONT})]\stepcounter{CONT}
\item\label{claim:Color_face_set} Let $c_r,c_\ell\in triple(p)$ be not necessarily distinct and let $f$ be a face of type I for $p$ such that the upper path $q$ of $f$ is in $\Touch_p$. If we call $\SC(p,q,( c_r,c_\ell))$, then $f$ is solved by $\mathcal{L}$.
\end{enumerate}

\claimbegin{\ref{claim:Color_face_set}} It holds that $q$ contains $\se^p$ and one of its children contains $\se^p$ (we have assumed that if the right child does not contain $\se^p$, then the left child does). W.l.o.g., $q_r$ contains $\se^p$. Thus $\mathcal{L}(q_r)=\scc(p)$, $\mathcal{L}(q_\ell)=c_\ell$, we call $\SC(p,q_r,( c_r,triple(p)\setminus\{c_\ell,\scc(p)\}))$ and $\RC(p,q_\ell,( c_\ell,\scc(p),triple(p)\setminus\{c_\ell,\scc(p)\}))$.

By Lemma~\ref{lemma:right_left}, $\mathcal{L}(e^f_r)\in \mathcal{L}(L(q_r))$ and $\mathcal{L}(e^f_\ell)\in \mathcal{L}(R(q_\ell))$. By Lemma \ref{lemma:RC} and by the call $\RC(p,q_\ell,( c_\ell,\scc(p),triple(p)\setminus\{c_\ell,\scc(p)\}))$, it holds that $\mathcal{L}(R(q_\ell))\subseteq\{c_\ell\}$. 
Moreover, by calling $\SC(p,q_r,( c_r,triple(p)\setminus\{c_\ell,\scc(p)\}))$ and by Lemma~\ref{lemma:SC}, $\mathcal{L}(L(q_r))\subseteq\{(triple(p)\setminus\{c_\ell,\scc(p)\}),\scc(p)\}= triple(p)\setminus\{c_\ell\}$. Hence $f$ is solved by $\mathcal{L}$ because $\mathcal{L}(e^f_r)\cap \mathcal{L}(e^f_\ell)=\emptyset$.
\claimend{\ref{claim:Color_face_set}}

Let $f$ be a face of type I for $p$ and let $q$ be its upper path. The call $\CST(p)$ implies either $\RC(p,q,( c_1,c_2))$ or $\LC(p,q,( c_1,c_2))$ or $\SC(p,q,(c_r,c_\ell))$ for some distinct $c_1,c_2\in triple(p)$ and not necessarily distinct $c_r,c_\ell\in triple(p)\setminus\{\scc(p)\}$. If one among the first two cases applies, then $f$ is solved by $\mathcal{L}$ by applying Lemma~\ref{lemma:RC} or Lemma~\ref{lemma:LC} and the same reasoning of Proposition~\ref{prop:Color}'s proof. If the last case applies, then $f$ is solved by $\mathcal{L}$ because of~\ref{claim:Color_face_set}.
\end{proof}

\begin{figure}[h]
\begin{algorithm}[H]
\SetAlgorithmName{$\CST(p)$}{}{}
\renewcommand{\thealgocf}{}
\caption{}
 \KwIn{a path $p$ in $\MAX$}
 \KwOut{assign $\mathcal{L}(q)\in triple(p)$ for all $q\in\Touch_p$ so that every face of type~I for $p$ is solved by $\mathcal{L}$ and $\mathcal{L}(q')=\scc(p)$ for all $q'\in\Touch_p$ containing $\se^p$}
{Let $c_r,c_\ell$ in $triple(p)\setminus\{\scc(p)\}$ be not necessarily distinct\;
\SC($p,p,( c_r,c_\ell))$\tcp*{assign $\mathcal{L}(q)\in triple(p)$ for all $q\in\Touch_p$}
}
\end{algorithm}
\end{figure}

The following corollary is a consequence of Lemma~\ref{lemma:RC}, Lemma~\ref{lemma:LC} and Lemma~\ref{lemma:SC} and it is crucial in the proof of Lemma~\ref{lemma:T_two_colors} in the next subsection.

\begin{corollary}\label{cor:R(q)_or_L(q)}
Let $p\in\MAX$. If we call $\CST(p)$, then $|\mathcal{L}(R(q))|\leq2$ and $|\mathcal{L}(L(q))|\leq2$ for all $q\in\Touch_p$.
\end{corollary}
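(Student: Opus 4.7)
The plan is to argue that each $q \in \Touch_p$ is, in the course of the recursion launched by $\CST(p)$, the first argument of exactly one of the three "top-level" calls $\SC(p,q,\cdot)$, $\RC(p,q,\cdot)$, or $\LC(p,q,\cdot)$, and then to read off both bounds directly from the corresponding lemma.

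First I would set up the recursion analysis. Calling $\CST(p)$ triggers $\SC(p,p,(c_r,c_\ell))$. Examining the pseudocode of $\SC$: whenever it enters a node $q$ with $\se^p\in q$, it either recurses into the (unique) child of $q$ in $\Touch_p$ again via $\SC$, or, if $q$ has two children, it recurses via $\SC$ into the child containing $\se^p$ and via $\RC$ or $\LC$ into the other child. Once $\RC$ or $\LC$ is invoked, all further recursion on that subtree stays inside $\RC$ or $\LC$ respectively. Consequently, the set of paths visited with $\SC$ forms exactly the path $Q\subseteq\text{Tree}\Touch_p$ of descendants of $p$ carrying $\se^p$; every other $q\in\Touch_p$ lies in a subtree rooted at some node $w$ on which either $\RC(p,w,\cdot)$ or $\LC(p,w,\cdot)$ was called, and hence $q$ itself is eventually the first argument of a nested call $\RC(p,q,\cdot)$ or $\LC(p,q,\cdot)$.

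Next I would conclude by case analysis on which of the three calls has $q$ as first argument. If $\SC(p,q,(c_r',c_\ell'))$ is called, Lemma~\ref{lemma:SC} yields $\mathcal{L}(R(q))\subseteq\{c_r',\scc(p)\}$ and $\mathcal{L}(L(q))\subseteq\{c_\ell',\scc(p)\}$, so both sides have size at most $2$. If $\RC(p,q,(c_1,c_2,c_3))$ is called, Lemma~\ref{lemma:RC} gives $\mathcal{L}(R(q))=\{c_1\}$ and $\mathcal{L}(L(q))\subseteq\{c_1,c_2\}$, again bounded by $2$. The case of $\LC$ is symmetric via Lemma~\ref{lemma:LC}, with the roles of $R(q)$ and $L(q)$ interchanged.

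The only non-routine step is the bookkeeping claim that the recursion "dispatches" each $q\in\Touch_p$ to precisely one of $\SC$, $\RC$, or $\LC$ as its outermost call with argument $q$. I would prove this by straightforward induction on the depth of $q$ in $\text{Tree}\Touch_p$, using the three branches of $\SC$'s pseudocode (Lines~\ref{line:one_child_Mix}, \ref{line:call_right_Mix}, \ref{line:call_left_Mix}): each recursive call strictly decreases the depth, and at the moment $q$ first appears as a first argument, the caller's branching has already fixed which of the three subroutines fires. Apart from this dispatch argument, the corollary is an immediate juxtaposition of the three lemmas, so I expect no further obstacles.
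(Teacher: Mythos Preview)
Your proposal is correct and follows exactly the route the paper indicates: the paper states the corollary as ``a consequence of Lemma~\ref{lemma:RC}, Lemma~\ref{lemma:LC} and Lemma~\ref{lemma:SC}'' without further detail, and your dispatch argument (each $q\in\Touch_p$ is the second argument of exactly one top-level call among $\SC$, $\RC$, $\LC$) is precisely what is needed to make that citation rigorous. One cosmetic slip: recursive calls increase, not decrease, the depth in $\text{Tree}\Touch_p$; otherwise nothing is missing.
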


\subsection{Dealing with faces of type II and type III}\label{sec:4_type_II_III}

In algorithm \FTF we use many labels, and we assign $triple(q)$ for each $q\in\Max_p$ so that $triple(q)\cap triple(p)=\emptyset$, ignoring paths in $\Touch_p$ because they are labeled with labels in $triple(p)$, where $p\in\MAX$. In this section, arguing with algorithm \FF, we have only four labels available, and the previous request is impossible to satisfy; indeed two distinct triples in $\{1,2,3,4\}$ have non empty intersection. Thus we have to consider also paths in $\Touch_p$.  In algorithm \TSM we assign $triple(q)$ for each $q\in\Max_p$ by visiting each tree $T$ in $(\Max_p^{III},\dashrightarrow)$, and in the following lemma we show that all paths in $\Touch_p$ that interfere with a face related to $T$ has at most two labels.

\begin{lemma}\label{lemma:T_two_colors}
Let $p\in\MAX$, let $T$ be a tree of the forest $(\Max_p^{III},\dashrightarrow)$ and let $\Delta=C\big(\{q\in\Touch_p \,|\, q$ interferes with $f_m$ for some $m\in V(T)\}\big)$. If we call $\CST(p)$, then $|\Delta|\leq2$.
\end{lemma}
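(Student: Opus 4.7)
The plan is to identify, for each tree $T$ in the forest $(\Max_p^{III},\dashrightarrow)$, a common ancestor path $s^\star\in\Touch_p$ and a side $\sigma\in\{L,R\}$ so that every $\Touch_p$ path counted by $\Delta$ lies in $\sigma(s^\star)$. Corollary~\ref{cor:R(q)_or_L(q)} applied to $s^\star$ would then immediately give $|\Delta|\leq 2$.

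For every $m\in V(T)$, let $s_m\in\Touch_p$ denote the sibling of $m$ in $T_g$ (so the children of $q_m$ in $T_g$ are exactly $m$ and $s_m$). Lemma~\ref{lemma:forest} already suggests that the vertices of $T$ can be linearly ordered $m_0,\ldots,m_k$ with $q_{m_0}\prec q_{m_1}\prec\ldots\prec q_{m_k}$, since whenever $m\dashrightarrow m'$ item (a) in its proof yields $q_m\preceq q_{m'}$, and ancestors in $T_g$ always form a chain. Crucially, $q_{m_i}\in\Touch_p$ cannot lie in the $T_g$-subtree of $m_{i+1}\in\Max_p$ (because descendants of $\Max_p$-paths are excluded from $\Touch_p$ by the definition of $\Max_p$), so $q_{m_i}\preceq s_{m_{i+1}}$, whence $s_{m_i}\preceq s_{m_{i+1}}$. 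Setting $s^\star:=s_{m_k}$, all the siblings lie in the $T_g$-subtree rooted at $s^\star$.

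Next, a $\Touch_p$-restricted version of the argument of Lemma~\ref{lemma:right_left} constrains where interferers can sit. Since $P$ is a \BNCS, parents and children in $T_g$ share no edges; together with the single-touch property and the fact that descendants of $m_i\in\Max_p$ do not lie in $\Touch_p$, this implies that no $\Touch_p$ path can contain the extremal edge of the lower boundary of $f_{m_i}$ that lies on $m_i$. Hence every $\Touch_p$ interferer with $f_{m_i}$ contains the other extremal edge, the one on $s_{m_i}$, which forces it into $L(s_{m_i})$ if $s_{m_i}$ is the right child of $q_{m_i}$ in $T_g$, and into $R(s_{m_i})$ otherwise. I would then prove by induction along the chain $m_0,\ldots,m_k$, combining Remark~\ref{remark:crucial} with the $\dashrightarrow$-edges of $T$, that the choice of side is uniform across $i$ (say, all $L$) and that each $s_{m_i}$ lies on the leftmost spine of $s^\star$ in $\text{Tree}\Touch_p$, so that $L(s_{m_i})\subseteq L(s^\star)$.

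Putting everything together, every $\Touch_p$ path contributing to $\Delta$ lies in $L(s^\star)$, and Corollary~\ref{cor:R(q)_or_L(q)} applied to $s^\star$ gives $|\Delta|\leq|\mathcal{L}(L(s^\star))|\leq 2$. The main obstacle is precisely the inductive step asserting uniform orientation together with simultaneous spine membership of the siblings $s_{m_i}$; this is the only part that truly exploits the interplay between the planar embedding, the single-touch property, and the way the $\dashrightarrow$-relation propagates a consistent left/right layout of the $m_i$'s inside $Int_{s_{m_{i+1}}}$ up the chain.
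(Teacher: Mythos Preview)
Your plan is essentially the paper's own strategy: exhibit a single vertex $v^\star$ of $\text{Tree}\Touch_p$ with $Q\subseteq R(v^\star)$ (or $L(v^\star)$), then invoke Corollary~\ref{cor:R(q)_or_L(q)}. Your observation that all the $q_m$'s for $m\in V(T)$ are $T_g$-ancestors of $q_{m^{\text{root}}}$, and hence automatically form a chain, is a clean shortcut; the paper obtains the same linear order more indirectly, by first showing $w^{\text{root}}:=s_{m^{\text{root}}}\in R(q_m)$ for every $m\in V(T)$ and arguing that otherwise some vertex of $\text{Tree}\Touch_p$ would have two incoming $R$-spine edges.

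Two remarks on the parts you left open. First, your justification that no $q\in\Touch_p$ can contain $\se^{m_i}$ via ``parents and children in $T_g$ share no edges'' is not right as stated: a parent and a child may well share edges. The correct reason (which the paper states just before Definition~\ref{def:dasharrow}) is that any path containing $\se^{m_i}$ is a $T_g$-descendant of $m_i$; since $m_i\in\Max_p$, no such descendant lies in $\Touch_p$. Second, the ``main obstacle'' you flag is exactly where the paper concentrates its work, and it splits it into three claims: (a) if $m\dashrightarrow m'$ and $m$ is a right child in $T_g$ then so is $m'$ (this, seeded at the root, gives your uniform orientation); (b) for each $m\in V(T)$ the $\Touch_p$-interferers with $f_m$ lie in $R(q_m)$; (c) these sets telescope along root-to-$m$ paths in $T$. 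Together with the $w^{\text{root}}$ trick above this yields $Q\subseteq R(q_{m_{|V(T)|}})$, which up to the harmless vertex $q_{m_k}$ (never an interferer, being the upper path of $f_{m_k}$ and a strict $T_g$-ancestor of all other $q_{m_j}$) is your $R(s^\star)$.
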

\begin{proof}
For convenience, let $Q=\{q\in\Touch_p \,|\, q$ interferes with $f_m$ for some $m\in V(T)\}$. Thanks to Corollary~\ref{cor:R(q)_or_L(q)}, it suffices to prove that either $Q\subseteq R(q)$ or $Q\subseteq L(q)$ for some $q\in\Touch_p$. For each $m\in V(T)$ we denote by $q_m$ its parent in $T_g$. Let $m^{\text{root}}$ be the root of $T$ and, w.l.o.g., we assume that $m^{\text{root}}$ is the right child in $T_g$ of $q_{m^\text{root}}$. For the sake of clarity the proofs of~\ref{item:all_right_child}, \ref{item:all_right_descendant} and \ref{item:path} are at the end of the main proof. 

\begin{enumerate}[label=\alph{CONT})]\stepcounter{CONT}
\item\label{item:all_right_child} let $m,m'\in V(T)$ satisfy $m\dashrightarrow m'$. If $m$ is the right child of $q_m$ in $T_g$, then $m'$ is the right child of $q_{m'}$ in $T_g$.
\end{enumerate}

Because of \ref{item:all_right_child} and being $m^{\text{root}}$ the right child of $q_{m^\text{root}}$, given any $m\in V(T)$, then $m$ is the right child of its parent $q_m$ in $T_g$. For each $m\in V(T)$ we define $P_m$ as the path in $V(T)$ from the root $m^{\text{root}}$ to $m$. Moreover, for each $m\in V(T)$, let $Q_m=\{q\in\Touch_p \,|$ $q$ interferes with $f_m\}$ and $Q_{P_m}=\bigcup_{m'\in P_m}Q_{m'}$. We need other two statements:

\begin{enumerate}[label=\alph{CONT})]\stepcounter{CONT}   
\item\label{item:all_right_descendant} for each $m\in V(T)$ it holds that $Q_m\subseteq R(q_m)$,
\end{enumerate}
\vspace*{-6mm}
\begin{enumerate}[label=\alph{CONT})]\stepcounter{CONT}
\item\label{item:path} for each $m\in V(T)$ it holds that $Q_{P_m}\subseteq R(q_m)$.
\end{enumerate}

Let $w^{\text{root}}\in\Touch_p$ be the child of $q_{m^\text{root}}$ different from $m^{\text{root}}$. Being $m^{\text{root}}$ in $\Max_p^{III}$ then $w^{\text{root}}$ interferes with $f_{m^{\text{root}}}$, hence, $w^{\text{root}}\in\bigcap_{m\in V(T)}Q_{P_m}$. By~\ref{item:path}, $w^{\text{root}}\in R(q_m)$ for every $m\in V(T)$, consequently we can order all elements in $V(T)$ in $(m_1,m_2,\ldots,m_{|V(T)|})$ so that $q_{m_1}\in R(q_{m_2})\subseteq R(q_{m_3})\subseteq\ldots\subseteq R(q_{m_{|V(T)|)}})$;  otherwise there would be a vertex in $TreeTouch_p$ with two incoming darts because of the definition of right descendant in Definition~\ref{def:R(q)_L(q)}. It is easy to see that $m_{|V(T)|}$ is a leaf in $T$. Finally, $Q=\bigcup_{m\in V(T)}Q_{m}\subseteq\bigcup_{m\in V(T)}Q_{P_m}\subseteq R(q_{m_{|V(T)|}})$ by above reasoning and~\ref{item:path}, and the thesis holds. To complete the proof, now we prove the previous three claims.

\vspace*{2mm}
\claimbegin{\ref{item:all_right_child}} let $w$ be the child of $q_m$ different from $m$ and let $w'$ be the child of $q_{m'}$ different from $m'$. By the single-touch property, $q_{m'}$ splits $w'$ into three parts: $w'=r\circ q_{m'}\cap w'\circ \ell$, with $x_{w'}\in r$ and $y_{w'}\in\ell$. Let $e_r$ (resp., $e_\ell$) be the extremal edge of $r$ (resp., $\ell$) that does not contain $x_{w'}$ (resp., $y_{w'}$). All these paths and edges are shown in Figure~\ref{fig:right_right}.

Being $m\dashrightarrow m'$, then either $e_r\in m$ or $e_\ell\in m$; indeed one among $e_r$ and $e_\ell$ is an extremal edge of the lower boundary of $f_{m'}$. By Remark~\ref{remark:crucial} applied to $w'$ and $q_{m'}$, the thesis holds if $e_r\in m$. We note that if $e_\ell\in m$, then $w_m$ does not intersect $p$ because of Remark~\ref{remark:crucial} and because $m$ is the right child of $q_m$ in $T_g$. This is absurdum because $w_m\in\Touch_p$. Thus, by above, $e_r\in m$ and the thesis is proved.
\claimend{\ref{item:all_right_child}}

\claimbegin{\ref{item:all_right_descendant}} being $m$ the right child of $q_m$, if $q_{m'}\in\Touch_p$ interferes with $f_m$, then $e^{f_m}_r$ belongs to $q_{m'}$. Thus all paths in $Q_m$ contain $e^{f_m}_r$ and they can be ordered w.r.t.\ $\preceq$. Hence $Q_m=\{q_1,q_2,\ldots,q_z\}$ so that $q_{i+1}\preceq q_i$ for all $i\in[z-1]$. Now we can proceed by induction on $i$. We note that $q_1$ is the child of $q_m$ in $T_g$ different from $m$ and $q_z\in R(q_m)$ because $q_z$ is the only child in $TreeTouch_p$. Therefore, we have proved the base case. The induction case is trivial if $q_i$ has only one child in $TreeTouch_p$, otherwise it follows from the same reasoning of \ref{item:all_right_child}.
\claimend{\ref{item:all_right_descendant}}

\claimbegin{\ref{item:path}} let $(m_1,m_2,\ldots,m_w)$ be the ordered sequence of vertices in $P_m$. Then $Q_{m_i}\subseteq R(q_{m_i})$ because of~\ref{item:all_right_descendant}. Moreover, being $m_i\dashrightarrow m_{i+1}$, then $q_{m_i}\in Q_{m_{i+1}}$ (this is formally proved in~\ref{claim_1} of Lemma~\ref{lemma:forest}). Hence for any $i\in[w]$, $Q_{m_i}\in R(q_{m_w})$ by applying repeatedly as stated, and the thesis follows.
\claimend{\ref{item:path}}

The proof is now complete.
\end{proof}

\begin{figure}[h]
\captionsetup[subfigure]{justification=centering}
\centering
	\begin{subfigure}{5cm}
\begin{overpic}[width=5cm,percent]{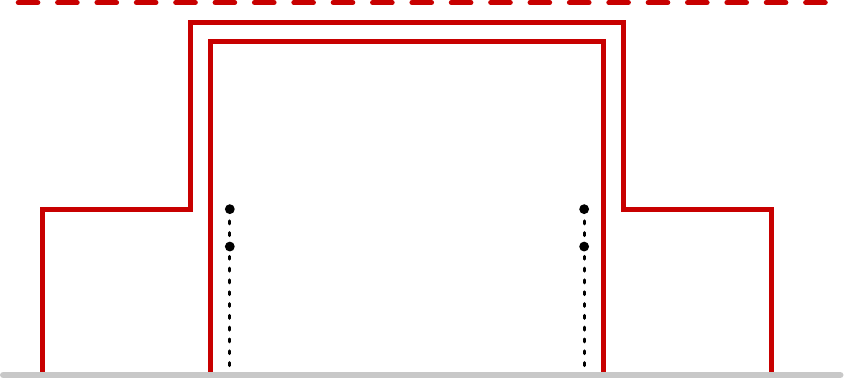}
\put(-4,43){$p$}
\put(-8,0){$f^\infty$}

\put(46,32){$w'$}
\put(79,14){$q_{m'}$}
\put(63,4){$r$}
\put(29,4){$\ell$}

\put(61,16){$e_r$}
\put(29,16){$e_\ell$}

\end{overpic}
\end{subfigure}
  \caption{paths and edges used in the proof of~\ref{item:all_right_child}. In red continuous line paths $w'$ and $q_{m'}$, in black dotted lines path $r$ and $\ell$, and we point out edges $e_r$ and $e_\ell$.}
\label{fig:right_right}
\end{figure}



\begin{figure}[h]
\begin{algorithm}[H]
\SetAlgorithmName{$\TSM(p)$}{}{}
\renewcommand{\thealgocf}{}
 \caption{}
  \KwIn{a path $p$ in $\MAX$}
  \KwOut{an assignment of $triple(q)$  and $\scc(q)$ for all $q\in\Max_p$ }
{\For{{\normalfont\textbf{each}} face $f$ of type II for $p$\label{line:4_for_type_II}}{
Let $\textit{label}_r,\textit{label}_\ell\in triple(p)$ be distinct\;
$\scc(m^f_r)\leftarrow \textit{label}_r$\;
$\scc(m^f_\ell)\leftarrow \textit{label}_\ell$\;
$triple(m^f_r)\leftarrow triple(p)$\label{line:4_triple(p)_1}\;
$triple(m^f_\ell)\leftarrow triple(p)$\label{line:4_triple(p)_2}\;
}
\For{{\normalfont\textbf{each}} tree $T$ in the forest $(\Max_p^{III},\dashrightarrow)$\label{line:4_for_tree_T}}{
Let $\Delta=C\big(\{q\in\Touch_p \,|\, q$ interferes with $f_m$ for some $m\in V(T)\}\big)$\label{line:Delta}\;
If $|\Delta|=1$, then add to $\Delta$ another label of $triple(p)$\label{line:Delta+1}\; 
Bipartite the tree $T$ into the two classes $A$ and $B$ so that the root of $T$ is in $A$ and $u\not\dashrightarrow v$ for all $u,v$ in the same class\label{line:4_bipartite}\;
Let $\textit{label}_{A}=\{1,2,3,4\}\setminus\{triple(p)\}$ and $\textit{label}_{B}=triple(p)\setminus \Delta$\label{line:c_A_c_B}\;
\For{{\normalfont\textbf{each}\label{line:cycle_A}} $m\in A$}{
$\scc(m)\leftarrow \textit{label}_{A}$\;
$triple(m)\leftarrow \Delta\cup \{\textit{label}_{A}\}$\label{line:A}}
\For{{\normalfont\textbf{each}} $m\in B$\label{line:cycle_B}}{
$\scc(m)\leftarrow \textit{label}_{B}$\;
$triple(m)\leftarrow triple(p)$\label{line:B}}
}
}
\end{algorithm}
\end{figure}

The following result explains the effects of algorithm \TSM. We recall that, given $m\in\MAX\setminus\{p_1\}$, $f_m\in\mathcal{F}$ denotes the face whose upper path is the parent of $m$ in $T_g$.

\begin{prop}\label{prop:TripleSpecialMax}
Let $p\in\MAX$, let $f$ be a face of type II for $p$, let $g$ be a face of III for $p$ and let $m\in\Max^{III}_p$ satisfy $g=f_m$. If we call $\CST(p)$ and $\TM(p)$ then
\begin{enumerate}[label=\theprop.(\arabic*), ref=\theprop.(\arabic*),leftmargin=\widthof{10.(1)}+\labelsep]\itemsep0em
\item\label{item:special_1} $\scc(m^f_r)\neq\scc(m^f_\ell)$,
\item\label{item:special_2} let $q\in\Touch_p$ interfere with $g$. Then $\scc(m)\neq \mathcal{L}(q)$,
\item\label{item:special_3} let $m'\in\Max_p\setminus\{m\}$ interfere with $g$. Then $\scc(m)\not\in triple(m')$.
\end{enumerate}
\end{prop}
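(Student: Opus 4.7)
The plan is to track, for each variable mentioned in the three statements, where it is assigned inside $\TSM(p)$ (supplemented with $\CST(p)$ for the $\mathcal{L}(q)$ values). Statement~(1) is essentially immediate: Line~\ref{line:4_for_type_II} of $\TSM(p)$ sets $\scc(m^f_r)$ and $\scc(m^f_\ell)$ to $\textit{label}_r$ and $\textit{label}_\ell$, which are chosen to be distinct labels in $triple(p)$.

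For Statement~(2), I would use that $m \in \Max_p^{III}$ lies in some tree $T$ of the forest $(\Max_p^{III},\dashrightarrow)$ and, after the bipartition performed on Line~\ref{line:4_bipartite}, ends up in class $A$ or class $B$. If $m \in A$, then $\scc(m) = \textit{label}_A = \{1,2,3,4\}\setminus triple(p)$, while Proposition~\ref{prop:ColorSpecial} gives $\mathcal{L}(q) \in triple(p)$; hence the two labels disagree. If instead $m \in B$, then $\scc(m) = \textit{label}_B \in triple(p)\setminus\Delta$; since $q \in \Touch_p$ interferes with $g = f_m$ with $m \in V(T)$, the definition of $\Delta$ on Line~\ref{line:Delta} forces $\mathcal{L}(q)\in\Delta$ (and Lemma~\ref{lemma:T_two_colors} guarantees $|\Delta|\le 2$, so $\textit{label}_B$ is well defined), so once again $\scc(m)\neq\mathcal{L}(q)$.

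For Statement~(3), the starting point is Lemma~\ref{lemma:forest}\ref{claim_2}: $m$ has at most one incoming dart in $(\Max_p,\dashrightarrow)$, so $m'$ is the unique element of $\Max_p\setminus\{m\}$ interfering with $g$, and $m'\dashrightarrow m$. I then split on whether $m'\in\Max_p^{III}$ or $m'\in\Max_p^{II}$. If $m'\in\Max_p^{III}$, then $m'$ lies in the same tree $T$ as $m$ and the two are in opposite classes of the bipartition; a short case check using the definitions of $\textit{label}_A,\textit{label}_B$ on Line~\ref{line:c_A_c_B} and the $triple$ assignments on Lines~\ref{line:A}--\ref{line:B} yields $\scc(m)\notin triple(m')$ in both subcases (the key facts being $\textit{label}_A\notin triple(p)$, which rules out $triple(m')=triple(p)$, and $\textit{label}_B\notin\Delta$, which rules out $triple(m')=\Delta\cup\{\textit{label}_A\}$).

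The subtle case, which I expect to be the main obstacle, is $m'\in\Max_p^{II}$. Here $triple(m')=triple(p)$ by Lines~\ref{line:4_triple(p)_1}--\ref{line:4_triple(p)_2}, so if $m$ happened to be in class $B$ we would only get $\scc(m)=\textit{label}_B\in triple(p)=triple(m')$, and the statement would fail. Thus I need to argue that $m$ must be in class $A$, i.e., that $m$ is the root of its tree in $(\Max_p^{III},\dashrightarrow)$. This is precisely where the uniqueness of incoming darts is used a second time: since the unique path in $\Max_p$ pointing to $m$ is $m'\in\Max_p^{II}$, no element of $\Max_p^{III}$ points to $m$, so $m$ is indeed a root of its tree, and Line~\ref{line:4_bipartite} places it in $A$. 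Consequently $\scc(m)=\textit{label}_A\notin triple(p)=triple(m')$, completing the argument.
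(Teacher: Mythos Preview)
Your proposal is correct and follows essentially the same approach as the paper's proof: Statement~(1) from the type~II for-loop, Statement~(2) from $\mathcal{L}(q)\in\Delta$ while $\scc(m)\notin\Delta$, and Statement~(3) by splitting on whether $m'\in\Max_p^{II}$ or $m'\in\Max_p^{III}$, with the key observation in the former case that $m$ must be the root of its tree (hence in class~$A$). Your write-up is in fact slightly more explicit than the paper's in justifying why $m$ is the root when $m'\in\Max_p^{II}$ (via the unique-incoming-dart claim) and in separating the $A$/$B$ subcases for Statement~(2), but the logical structure is identical.
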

\begin{proof}
The first statement is a consequence of the for cycle in Line~\ref{line:4_for_type_II}. Let $T$ be the tree in $(\Max_p^{III},\dashrightarrow)$ containing $m$ and let $\Delta$ be as defined in Line~\ref{line:Delta}. The second statement is implied by the for cycle in Line~\ref{line:4_for_tree_T}, indeed, $\mathcal{L}(q)\in\Delta$ and $\scc(m)\not\in\Delta$ because of Line~\ref{line:c_A_c_B}.

It remains to prove the third statement. There are two cases: $m'\in\Max_p^{II}$ and $m'\in\Max_p^{III}$. If the former case applies, then $m$ is the root of $T$. Thus $triple(m)=triple(p)$ because of Line~\ref{line:4_triple(p)_1} or Line~\ref{line:4_triple(p)_2} and $\scc(m)\not\in triple(p)$ because of Line~\ref{line:4_bipartite} that assigns the class $A$ to the root of $T$. If the latter case applies, then $m'\dashrightarrow m$. Hence, because of Line~\ref{line:4_bipartite}, $m\in A$ and $m'\in B$, or vice-versa. Finally,  $\scc(m)\not\in triple(m')$ because of the for cycles in Line~\ref{line:cycle_A} and Line~\ref{line:cycle_B} (it is only a fact of checking).
\end{proof}

\subsection{Correctness of algorithm \FF}\label{sec:4_FF}

We prove in Theorem~\ref{th:4_forest} the correctness of algorithm \FF shown in Subsection~\ref{sec:4_outline}; the proof is analogous to Theorem~\ref{th:15_forest}'s proof. As a consequence we have Corollary~\ref{cor:4_forest_teorico} which state that every \NCS has \FCN at most 4.

\begin{theorem}\label{th:4_forest}
Given a \NCS $P$, algorithm $\FF$ produces a forest labeling $\mathcal{L}$ of $P$ which uses at most 4 labels.
\end{theorem}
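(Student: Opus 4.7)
The plan is to mirror the proof of Theorem~\ref{th:15_forest} step by step, replacing its ``disjoint triples'' argument by the finer special-label machinery now available. First I would check that the labels produced by \FF all lie in $\{1,2,3,4\}$: we have $triple(p_1)=\{1,2,3\}$, and every subsequent assignment inside \TSM either reuses $triple(p)$ or inserts $\textit{label}_A\in\{1,2,3,4\}\setminus triple(p)$ into a set of the form $\Delta\cup\{\textit{label}_A\}$. The non-trivial point here is that the latter must be a triple, i.e.\ $|\Delta|=2$; this is exactly what Lemma~\ref{lemma:T_two_colors} together with the padding in Line~\ref{line:Delta+1} guarantees, so that Line~\ref{line:A} is well defined.

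Next, by Theorem~\ref{th:faces} it suffices to show that every face $f\in\mathcal{F}$ is solved by $\mathcal{L}$. Fix such an $f$, let $q$ be its upper path, and let $p\in\MAX$ be the unique path with $q\in\Touch_p$; then $f$ is of type I, II, or III for $p$. The type~I case is immediate from Proposition~\ref{prop:ColorSpecial} applied to $\CST(p)$, exactly as in Theorem~\ref{th:15_forest}. For type~II, note that $f=f_{m^f_r}=f_{m^f_\ell}$, so $\se^{m^f_r}=e^f_r$ and $\se^{m^f_\ell}=e^f_\ell$; Proposition~\ref{prop:ColorSpecial} applied to $\CST(m^f_r)$ and $\CST(m^f_\ell)$ then forces every path containing $e^f_r$ (resp.\ $e^f_\ell$) to carry the label $\scc(m^f_r)$ (resp.\ $\scc(m^f_\ell)$), and these two special labels differ by~\ref{item:special_1}.

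The type~III case is the only place where the argument genuinely departs from Theorem~\ref{th:15_forest}'s proof, because triples can now overlap. Assuming WLOG $q_r=m^f\in\Max_p$, one has $\se^{m^f}=e^f_r$, so $\CST(m^f)$ yields $\mathcal{L}(e^f_r)=\{\scc(m^f)\}$. The paths containing $e^f_\ell$ split into three groups: paths in $\Touch_p$ interfering with $f$, whose labels lie in $triple(p)$ and avoid $\scc(m^f)$ by~\ref{item:special_2}; at most one $m'\in\Max_p\setminus\{m^f\}$ interfering with $f$; and paths in $\Touch_{m'}$ containing $e^f_\ell$. For the last two groups the labels lie in $triple(m')$, which avoids $\scc(m^f)$ by~\ref{item:special_3}, so $\mathcal{L}(e^f_r)\cap\mathcal{L}(e^f_\ell)=\emptyset$ and $f$ is solved. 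The main obstacle I foresee is making the three-group classification of paths through $e^f_\ell$ exhaustive; this relies on the chain structure imposed on these paths by the single-touch property and on Lemma~\ref{lemma:forest}, which in particular forbids a second max-path besides $m'$ from interfering with $f$.
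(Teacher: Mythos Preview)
Your proposal is correct and follows essentially the same route as the paper's own proof: reduce to faces via Theorem~\ref{th:faces}, handle type~I by Proposition~\ref{prop:ColorSpecial}, handle type~II via~\ref{item:special_1} together with $\CST(m^f_r)$ and $\CST(m^f_\ell)$, and handle type~III via~\ref{item:special_2}, \ref{item:special_3} and the calls $\CST(m^f)$, $\CST(m')$. Your extra paragraph verifying that every assigned triple really is a subset of $\{1,2,3,4\}$ of size three (via Lemma~\ref{lemma:T_two_colors} and Line~\ref{line:Delta+1}) and your explicit three-group decomposition of the paths through $e^f_\ell$ are points the paper leaves implicit, so if anything your write-up is slightly more careful than the original.
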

\begin{proof}
Thanks to Theorem \ref{th:faces} we only need to prove that every faces in $\bigcup_{p\in P}p$ is solved by $\mathcal{L}$. Let $f$ be a face, then there exists $p\in\MAX$ such that $f$ is a face of type I, or type II or type III for $p$.

If $f$ is of type I for $p$, then $f$ is solved by $\mathcal{L}$ because of the call $\CST(p)$ and Proposition~\ref{prop:ColorSpecial}.

If $f$ is of type II for $p$, then $e^f_r\in m^f_r$ and $e^f_\ell\in m^f_\ell$. Moreover, if a path $q\in P$ interferes with $f$, then either $q\in\Touch_{m^f_r}$ or $q\in\Touch_{m^f_\ell}$. Thus $f$ is solved by $\mathcal{L}$ because of~\ref{item:special_1} and the calls $\CST(m^f_r)$ and $\CST(m^f_\ell)$.

If $f$ is a face of III type for $p$, then either $e^f_r\in m^f$ or $e^f_\ell\in m^f$, where $m^f$ is the unique child of $q$ belonging to $\Max_p$. W.l.o.g., we assume that $e^f_r\in m^f$. We observe that $e^f_\ell$ may belong to some paths in $\Touch_p$ and at least one in $\Max_p$. If there does not exist any path $m'\in\Max_p$ satisfying $e^f_\ell\in m'$, then $f$ is solved by $\mathcal{L}$ because of~\ref{item:special_2} and the call $\CST(m^f)$.  Otherwise, let $m'\in\Max_p$ satisfy $e^f_\ell\in m'$. Then $f$ is solved by $\mathcal{L}$ because of~\ref{item:special_2}, \ref{item:special_3} and the calls $\CST(m^f)$ and $\CST(m')$.
\end{proof}

\begin{corollary}\label{cor:4_forest_teorico}
Let $P$ be a set of non-crossing shortest paths in a plane graph $G$ whose extremal vertices lie on the external face of $G$. Then the \FCN of $P$ is at most 4.
\end{corollary}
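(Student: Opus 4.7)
The plan is to derive this corollary as a direct consequence of Theorem~\ref{th:4_forest}. The first step is to argue that the hypotheses of the corollary produce an instance fitting Definition~\ref{definition:NCS}. By assumption the paths are non-crossing, each is a shortest path in $G$, and the extremal vertices lie on $f^\infty$. The only condition in the definition of \NCS not immediately given is the single-touch property. However, as discussed in the paragraph following Definition~\ref{definition:NCS}, we can always enforce single-touch by an infinitesimal perturbation of the edge weights of $G$ which makes every pair of vertices admit a unique shortest path; this perturbation does not change which combinatorial paths are non-crossing shortest paths, so the resulting perturbed instance is a \NCS that contains (a representative of) every path in $P$.

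Next I would apply Theorem~\ref{th:4_forest} to the \NCS so obtained, which yields a forest labeling $\mathcal{L}\colon P\mapsto [4]$. Unpacking the definition of forest labeling, the subgraph $F_i=\bigcup_{\{p\in P\,|\,\mathcal{L}(p)=i\}}p$ is a forest for each $i\in[4]$. Consequently the family $F=\{F_1,F_2,F_3,F_4\}$ is a set of at most four forests of $G$ such that every $p\in P$ is contained in $F_{\mathcal{L}(p)}\in F$. By the definition of \PCFN given in the abstract and introduction, this is precisely what is required to conclude $\PCFN(P)\leq 4$.

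There is essentially no obstacle beyond these two bookkeeping steps: the whole weight of the argument is carried by Theorem~\ref{th:4_forest}, whose correctness is in turn reduced (via Theorem~\ref{th:faces}) to the face-solving properties proved in Propositions~\ref{prop:ColorSpecial} and~\ref{prop:TripleSpecialMax}. The only subtlety worth stating explicitly in the write-up is that the binarization step of algorithm \FF (the transformation \NCS$\to$\BNCS) adds auxiliary paths that are shortest in $U$ but not necessarily in $G$, yet this does not affect the statement of the corollary because the output labeling $\mathcal{L}$ is restricted back to the original $P$, and the forests $F_i$ remain forests after removing the auxiliary paths. Hence the corollary follows.
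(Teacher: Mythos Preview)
Your proposal is correct and takes essentially the same approach as the paper, which states the corollary without proof as an immediate consequence of Theorem~\ref{th:4_forest}. Your added care about the single-touch property (via perturbation) and about restricting the labeling back to the original $P$ after binarization is sound bookkeeping that the paper leaves implicit.
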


\section{Four forests are necessary}\label{sec:controesempio}

In this section we prove that in the general case a \NCS $P$ may satisfy $\PCFN(P)=4$. As a consequence, the result in Theorem~\ref{th:4_forest} is tight.

\begin{theorem}
There exists a \NCS $P$ such that $\PCFN(P)=4$.
\end{theorem}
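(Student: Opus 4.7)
The plan is to exhibit a specific plane graph $G$ together with a set $P$ of non-crossing shortest paths whose extremal vertices lie on the external face of $G$, and then show that any forest labeling of $P$ requires at least $4$ labels. Since the upper bound $\PCFN(P) \leq 4$ was established in Corollary~\ref{cor:4_forest_teorico}, the example will witness tightness.

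For the construction I would start from the $15$-path configuration in Figure~\ref{fig:3_forests}, which already witnesses $\PCFN = 3$, and nest it. Concretely, I would take a ``root'' path $p_1$ (the outermost one, whose $\gamma_{p_1}$ covers almost all of $f^\infty$), then hang several pairwise-uncomparable children under $p_1$ in the genealogy tree, and under each child attach a slightly perturbed copy of the $3$-forest gadget. The interior faces of $U$ are arranged so that every face $f$ created along $p_1$ and along each intermediate ancestor imposes, via Definition~\ref{def:solved_by_C}, a nonempty constraint $\mathcal{L}(e^f_r) \cap \mathcal{L}(e^f_\ell) = \emptyset$. With the paths and gadgets wired together carefully (by forcing many paths to share $\se^{p_1}$, and many others to appear simultaneously as right and left descendants), three labels eventually cannot satisfy all face-constraints at once.

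To verify the hypotheses of Definition~\ref{definition:NCS}: the embedding is fixed by drawing the gadgets inside nested regions with all terminals on $f^\infty$, and edge weights are assigned by putting a tiny perturbation $\varepsilon_i > 0$ on edges incident to each terminal $v_i$ and weight $1$ on all other edges (as done for Figure~\ref{fig:6_paths}). This makes each designated $xy$-path the unique shortest path, hence all paths are pairwise single-touch, and the planar layout guarantees they are non-crossing. Thus $P$ is a \NCS.

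For the lower bound, the plan is a case analysis in the style of Theorem~\ref{th:faces}. Assume for contradiction a labeling $\mathcal{L}:P \to [3]$ which is a forest labeling. Then every face of $U$ must be solved by $\mathcal{L}$. Starting from the outermost face (whose upper path is $p_1$) and working inwards, one propagates the constraints $\mathcal{L}(e^f_r) \cap \mathcal{L}(e^f_\ell) = \emptyset$ down the genealogy tree; using Lemma~\ref{lemma:right_left} to control which paths carry which labels across each face, one shows that at some nested face three distinct labels have already been forced on three pairwise-adjacent boundaries, leaving no label free to distinguish the extremal edges of the next face, producing a monochromatic cycle and hence the required contradiction.

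The main obstacle is keeping the example small enough that the case enumeration is humanly checkable, while large enough to force the fourth label; by contrast with the $3$-forest example of Figure~\ref{fig:3_forests}, a pure local gadget seems insufficient, and the construction must exploit interactions between a \emph{type~III} face at some ancestor level and \emph{type~II} faces further down (in the terminology of Definition~\ref{def:face_types}) to exhaust the three available labels simultaneously on $e^f_r$, $e^f_\ell$ and $\se^{q^f}$.
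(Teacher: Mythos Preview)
Your write-up is a plan, not a proof: you never commit to a concrete set $P$, and the lower-bound argument (``one propagates the constraints \ldots\ one shows that at some nested face three distinct labels have already been forced'') is an outline with the actual work omitted. Simply nesting several copies of the Figure~\ref{fig:3_forests} gadget under a common ancestor does not obviously force a fourth label: each copy, in isolation, still admits a $3$-labeling, and you give no mechanism by which the copies interact to exhaust the third label globally. Your closing paragraph even flags this (``the main obstacle is keeping the example small enough \ldots''), which is an admission that the construction is not in hand.

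The paper's construction is quite different from what you sketch, and the difference matters. It does \emph{not} nest copies of the $15$-path gadget; instead it builds a single very tall family $P_{13}$ in which \emph{every} path touches $p_1$ (so all faces are of type~I for $p_1$), arranged so that $T_g^{P_{13}}$ is a complete binary tree of depth $13$ and every face has a two-edge lower boundary with $e^f_r$ shared by all of $L(q^f_r)$ and $e^f_\ell$ shared by all of $R(q^f_\ell)$. The point of this rigidity is that any $3$-labeling solving all these type~I faces is forced, after a short pigeonhole at depth~$4$, to coincide on a deep enough subtree with the output of \RC\ or \LC\ (cf.\ Lemma~\ref{lemma:RC}/\ref{lemma:LC}); in particular the labels along $R(w)$ and $L(w)$ for some leaf $w$ of $P_6$ are completely determined. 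Only then are three small auxiliary paths $q_1^w,q_2^w,q_3^w$ attached near $w$ (Figure~\ref{fig:controesempio}), and the forced pattern on $L(w)$ leaves each $q_i^w$ exactly one admissible label, the same one for all three, which creates an unsolved face. So the mechanism is not an interaction between type~II and type~III faces as you conjecture, but a rigidity argument showing that three labels pin down the labeling on a large type~I region, followed by a local gadget that contradicts that pinned pattern. If you want to salvage your approach, you would need an analogous rigidity step: an argument that any $3$-labeling of your nested configuration is forced into a specific pattern on some subregion, not merely that each gadget individually needs three labels.
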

\begin{proof}
The \NCS $P$ of this proof is built recursively and the proof is made step by step for convenience and readability.
\begin{enumerate}[label=\Alph*),leftmargin=0pt,itemindent=*,listparindent=\parindent,parsep=0pt]
\item\label{item:contr_a} For each $k$ there exists a \NCS $P_k$ such that:
\begin{itemize}\itemsep0em
\item $P_k=\Touch_{p_1}$, where $p_1$ is the path corresponding to the root of the genealogy tree,
\item $T_g^{P_k}$ is a complete binary tree composed by $2^k-1$ paths,
\item if $f$ is a face of type I for $p$, then the lower boundary of $f$ has exactly two edges, i.e., the lower boundary consists of $e^f_r$ and $e^f_\ell$,
\item for each face $f$ of type I for $p_1$, it holds that $e^f_r\in q'$, for all $q'\in L(q^f_r)$, and $e^f_\ell\in q''$, for all $q''\in R(q^f_\ell)$ (we recall that $q^f_r$ and $q^f_\ell$ are the right child and the left child, respectively, of the upper path of $f$, see Definition~\ref{def:upper_path_e_simili}).
\end{itemize}
the construction of $P_k$ can be obtained by generalizing $P_3$ in Figure \ref{fig:G_3}. We consider $P_{13}$.

For each $w$ leaf of $P_6$ let $R(w)=\{r_1^w,\ldots,r_8^w\}$ be ordered so that $r_{i+1}^w\preceq r_i^w$, for all $i\in[7]$. Similarly, let $L(w)=\{\ell_1^w,\ldots,\ell_8^w\}$ be ordered so that $\ell_{i+1}^w\preceq \ell_i^w$, for all $i\in[7]$ (note that $r_1^w=\ell_1^w=w$). For all $w$ leaf of $P_6$, we add three paths $q_1^w,q_2^w,q_3^w$ whose extremal vertices are in the subpath of the infinite face between $y_{\ell_1^w}$ and $y_{\ell_8^w}$ according to Figure~\ref{fig:controesempio}. Similarly, we add three paths $p_1^w,p_2^w,p_3^w$ whose extremal vertices are in the subpath of the infinite face between $x_{r_1^w}$ and $x_{r_8^w}$ according to a symmetric version of Figure~\ref{fig:controesempio}. 

In this way we obtain a \NCS $P$  composed by $6\cdot 2^5+2^{13}-1=8383$ paths. Let us assume by contradiction  that there exists a forest labeling $\mathcal{L}:P\mapsto[3]$. We say that a face $f$ is \emph{unsolved by $\mathcal{L}$} if $\bigcap_{e\in E(\partial f)}\mathcal{L}(e)\neq\emptyset$. To finish the proof it suffices to prove that there exists a face $f$ in $\bigcup_{p\in P}p$ unsolved by $\mathcal{L}$.

\item\label{item:contr_b} There exist three paths $q,q',q''\in P_{13}$ such that $q\preceq q'\preceq q''$, $|\mathcal{L}(\{q,q',q''\})|=3$ and $q$ is a leaf of $P_4$.  

\item\label{item:contr_c} Let $f,f_1,f_2$ be the faces in $P_{13}$ whose upper paths are $q,q_r,q_\ell$, respectively. If $e$ is an extremal edge of the lower boundary of a face $g$ in $\{f,f_1,f_2\}$ then $|\mathcal{L}(e)|<3$, otherwise $g$ would be unsolved by $\mathcal{L}$ because of~\ref{item:contr_b}.


\item \label{item:contr_d}Let $q_{r,\ell}$ be the left child of $q_r$ and let $q_{\ell,r}$ be the right child of $q_\ell$. By~\ref{item:contr_c}, for some distinct $c_1,c_2\in[3]$ it happens
\begin{itemize}
\item $\mathcal{L}(R(q_{r,\ell}))=\{c_1\}$ and $\mathcal{L}(L(q_{r,\ell}))=\{c_1,c_2\}$, or
\item $\mathcal{L}(L(q_{r,\ell}))=\{c_1,c_2\}$ and $\mathcal{L}(L(q_{r,\ell}))=\{c_1\}$, or
\item $\mathcal{L}(R(q_{\ell,r}))=\{c_1\}$ and $\mathcal{L}(L(q_{\ell,r}))=\{c_1,c_2\}$, or
\item $\mathcal{L}(L(q_{\ell,r}))=\{c_1,c_2\}$ and $\mathcal{L}(L(q_{\ell,r}))=\{c_1\}$,
\end{itemize}

indeed, if no one of the previous one applies, then 

\begin{itemize}
\item $|\mathcal{L}(R(q_{r,\ell}))|=1$ and $|\mathcal{L}(L(q_{r,\ell}))|=1$, which in turn implies $\mathcal{L}(R(q_{r,\ell}))=\mathcal{L}(L(q_{r,\ell}))=\mathcal{L}(q_{r,\ell})$ and thus the face $f_2$ is unsolved by $\mathcal{L}$, absurdum, or
\item $|\mathcal{L}(R(q_{\ell,r}))|=1$ and $|\mathcal{L}(L(q_{\ell,r}))|=1$, similar to the previous case, or
\item $|\mathcal{L}(R(q_{r,\ell}))|=|\mathcal{L}(L(q_{r,\ell}))|=|\mathcal{L}(R(q_{r,\ell}))|=|\mathcal{L}(L(q_{r,\ell}))|=2$, and thus $f$ is unsolved by $\mathcal{L}$ because of~\ref{item:contr_b} and \ref{item:contr_c}, absurdum.
\end{itemize}


\item\label{item:contr_e} By~\ref{item:contr_d}, there exists a leaf $w$ of $P_6$ such that either $\mathcal{L}(R(w))=\{c\}$ and $\mathcal{L}(L(w))=\{c,c'\}$ for some distinct $c,c'\in[3]$, or $\mathcal{L}(L(w))=\{c\}$ and $\mathcal{L}(R(w))=\{c,c'\}$. Let us assume that the latter case applies.

\item\label{item:contr_f} Now, \ref{item:contr_a} and \ref{item:contr_b} imply that every path $w'$ in $P_{13}$ such that $w'\preceq w$ is labeled according to algorithm $\LC(p_1,w,( c,c',c''))$, where $c''=\{1,2,3\}\setminus\{c,c'\}$. Hence, $\mathcal{L}(\{\ell_2^w,\ell_4^w,\ell_6^w,\ell_8^w\})=\{c'\}$ and $\mathcal{L}(\{\ell_2^w,\ell_4^w,\ell_6^w,\ell_8^w\})=\{c\}$.

\item\label{item:contr_g} Let $g_1,g_2,g_3$ be the faces generated by paths $q_1^w,q_2^w,q_3^w$ as depicted in Figure~\ref{fig:controesempio}. By~\ref{item:contr_f}, if $\mathcal{L}(q_1^w)\in\{c,c'\}$, then $f_1$ is unsolved by $\mathcal{L}$, thus $\mathcal{L}(q_1^w)=c''$. Similarly, $\mathcal{L}(q_2^w)=c''$ and $\mathcal{L}(q_3^w)=c''$. Finally, $c''\in\bigcap_{e\in E(g_3)}\mathcal{L}(e)$, and thus $g_3$ is unsolved by $\mathcal{L}$, absurdum.\qedhere
\end{enumerate}
\end{proof}

\begin{figure}[h]
\captionsetup[subfigure]{justification=centering}
\centering
	\begin{subfigure}{10cm}
\begin{overpic}[width=10cm,percent]{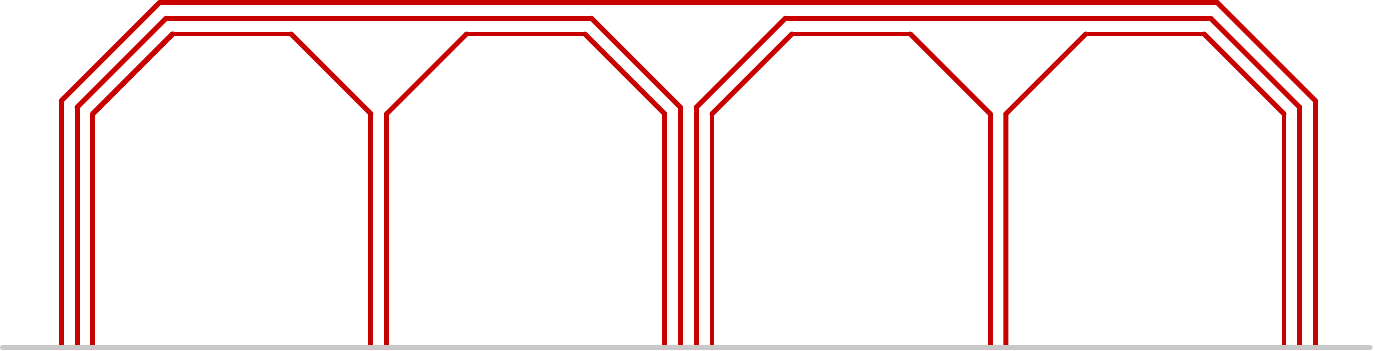}
\put(49.2,21){$f$}
\put(26,20.3){$f_2$}
\put(71.2,20.3){$f_1$}
\end{overpic}
\end{subfigure}
\caption{the \NCS $P_3$ and faces $f,f_1,f_2$.}
\label{fig:G_3}
\end{figure}

\begin{figure}[h]
\captionsetup[subfigure]{justification=centering}
\centering
	\begin{subfigure}{6cm}
\begin{overpic}[width=6cm,percent]{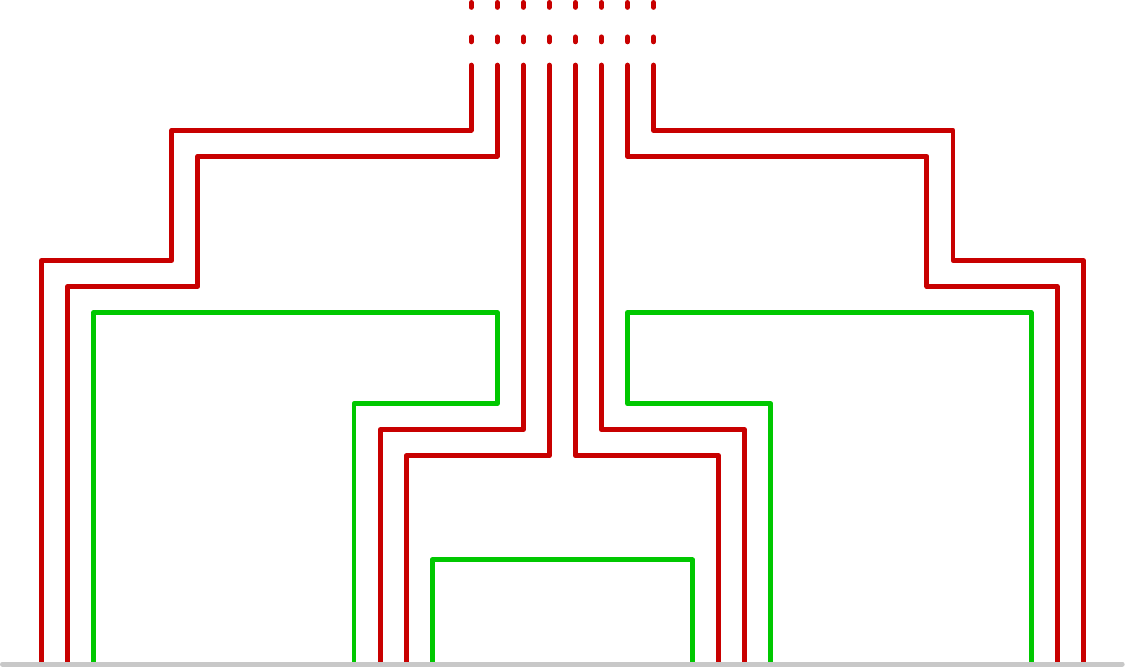}

\put(29,38){$g_1$}
\put(67,38){$g_2$}
\put(48,13){$g_3$}

\put(22,25.5){$q_1^w$}
\put(74,25.5){$q_2^w$}
\put(48,4){$q_3^w$}

\put(-6,0){$f^\infty$}

\end{overpic}
\end{subfigure}
\caption{paths $q_1^w,q_2^w$ and $q_3^w,$}
\label{fig:controesempio}
\end{figure}

\section{Conclusion}\label{sec:conclusions}

We introduced the \FCN, that is a variant of the classical covering problem. We showed that the \FCN is treatable for shortest paths in planar graphs. The main proved result states that if $P$ is a set of non-crossing shortest paths in a plane graph whose extremal vertices lie on the same face, then the \FCN of $P$ is at most 4; we also prove that this bound is tight.

We hope that more results on \FCN or its variants for particular graphs and paths classes could lead to more efficient algorithms for shortest paths and distance problems.

\section*{Acknowledgements}
We wish to thank Paolo G. Franciosa for deep discussion and his helpful advice.

\bibliography{biblio_tesi.bib}
\bibliographystyle{siam}

\end{document}